 \newtheorem{theorem}{Theorem}[section]
 \newtheorem{corollary}[theorem]{Corollary}
 \newtheorem{lemma}[theorem]{Lemma}
 \newtheorem{proposition}[theorem]{Proposition}
 \theoremstyle{definition}
 \newtheorem{definition}[theorem]{Definition}
 \theoremstyle{remark}
 \numberwithin{equation}{section}
\def \no#1#2#3 {{\bf #1} (#3), #2.}
\def \eds#1#2#3 {#1, #2, #3.}
\def\R{{\mathbb R}}
\def\e{{\rm e}}
\def\d{{\rm d}}
\def\Y{{\rm Y}}
\def\T{{\mathbb T}}
\def\eps{{\varepsilon}}
\def\:{{\colon}}
\def\be#1{\begin{equation}\label{#1}}
\def\ee{\end{equation}}
\def\<{\langle}
\def\>{\rangle}
\def\coloneqq{:=}
\newcommand{\p}{\partial}
\newcommand{\lewy}{\left\lbrace}
\newcommand{\prawy}{\right\rbrace}
\newcommand{\TT}{\mathbb{T}}
\newcommand{\ZZ}{\mathbb{Z}}
\newcommand{\NN}{\mathbb{N}}
\newcommand{\RR}{\mathbb{R}}
\newcommand{\eqnb}{\begin{equation}}
\newcommand{\eqne}{\end{equation}}
\newcommand\blfootnote[1]{%
  \begingroup
  \renewcommand\thefootnote{}\footnote{#1}%
  \addtocounter{footnote}{-1}%
  \endgroup
}
\begin{document}
\title{Partial regularity for a surface growth model}
\author{Wojciech S. O\.za\'nski, James C. Robinson}
\date{\vspace{-5ex}}
\maketitle
\blfootnote{\noindent Mathematics Institute, Zeeman Building, University of Warwick, Coventry CV4 7AL, UK\\ w.s.ozanski@warwick.ac.uk, j.c.robinson@warwick.ac.uk}




\begin{abstract}
We prove two partial regularity results for the scalar equation $u_t+u_{xxxx}+\partial_{xx}u_x^2=0$, a model of surface growth arising from the physical process of molecular epitaxy. We show that the set of space-time singularities has (upper) box-counting dimension no larger than $7/6$ and $1$-dimensional (parabolic) Hausdorff measure zero. These parallel the results available for the three-dimensional Navier--Stokes equations. In fact the mathematical theory of the surface growth model is known to share a number of striking similarities with the Navier--Stokes equations, and the partial regularity results are the next step towards understanding this remarkable similarity. As far as we know the surface growth model is the only lower-dimensional ``mini-model'' of the Navier--Stokes equations for which such an analogue of the partial regularity theory has been proved.
In the course of our proof, which is inspired by the rescaling analysis of Lin (1998) and Ladyzhenskaya \& Seregin (1999), we develop certain \emph{nonlinear parabolic Poincar\'e inequality}, which is a concept of independent interest. We believe that similar inequalities could be applicable in other parabolic equations.
\end{abstract}


\section{Introduction}

In this paper we  consider the one-dimensional model of surface growth
\begin{equation}\label{BR}
u_t+u_{xxxx}+\partial_{xx}u_x^2=0
\end{equation}
on the one-dimensional torus $\T$, under the assumption that $\int_{\T}u=0$; we refer to this in what follows as the SGM.

As previously observed by Bl\"omker \& Romito \cite{blomkerromito09,blomkerromito12}, this model shares many striking similarities with the three-dimensional Navier--Stokes equations. In particular, in their 2009 paper Bl\"omker \& Romito proved local existence in the critical space $\dot H^{1/2}$ and (spatial) smoothness for solutions bounded in $L^{8/(2\alpha-1)}((0,T);H^\alpha)$ for all $1/2<\alpha<9/2$; in the 2012 paper they prove local existence in a critical space of a similar type to that occurring in the paper by Koch \& Tataru \cite{koch_tataru} for the Navier--Stokes equations.

The aim of this paper is to prove partial regularity results for \eqref{BR} that are analogues of those proved by Caffarelli, Kohn, \& Nirenberg \cite{CKN} for the Navier--Stokes equations. Perhaps surprisingly their inductive method does not seem well adapted to \eqref{BR}, and instead we use the rescaling approach of Lin \cite{lin} and Ladyzhenskaya \& Seregin \cite{ladyzhenskaya_seregin}. The main issue is that the biharmonic heat kernel, given in the one-dimensional case by
$$
K(x,t)=\alpha t^{-1/4}f(|x|t^{-1/4}),\qquad\mbox{where}\qquad f(x)=\int_0^\infty \e^{-s^4}\cos(xs)\,\d s
$$
and $\alpha$ is a normalising constant, takes negative values so cannot be used as the basis of the construction of a suitable sequence of test functions for use in the local energy inequality.

Our main result is the following.
\begin{theorem}
Let $u$ be a suitable weak solution of the surface growth model. Then
\begin{enumerate}
\item[(i)] there exist $\varepsilon_0,R_0>0$ such that if $r<R_0$ and \[
\frac{1}{r^2}\int_{Q(z,r)}|u_x|^3\leq \varepsilon_0
\]
for a cylinder $Q(z,r)$, then $u$ is H\"older continuous in $Q(z,r/2)$;
\item[(ii)] there exists $\varepsilon_1>0$ such that if either
\[
  \limsup_{r\to0}\frac{1}{r}\int_{Q(z,r)}u_{xx}^2\leq \varepsilon_1\quad \text{ or }  \quad\limsup_{r\to 0}\left\{\sup_{t-r^4\le s\le t+r^4}\frac{1}{r}\int_{B_r(x)}u(s)^2 \right\}\leq \varepsilon_1
\]
  then $u$ is H\"older continuous in $Q(z,\rho )$ for some $\rho >0$. 
\end{enumerate}
\end{theorem}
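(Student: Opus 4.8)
The plan is to follow the rescaling/blow-up philosophy of Lin and Ladyzhenskaya–Seregin adapted to the biharmonic setting. The backbone is part (i): an $\varepsilon$-regularity criterion stating that smallness of a scale-invariant quantity on a parabolic cylinder forces Hölder continuity on the half-cylinder. Under the parabolic scaling $u_\lambda(x,t)=\lambda u(\lambda x,\lambda^4 t)$ that leaves \eqref{BR} invariant, the natural dimensionless quantity is $r^{-2}\int_{Q(z,r)}|u_x|^3$, and this is exactly the hypothesis in (i). First I would establish a \emph{decay estimate}: there is $\theta\in(0,1/2)$ and $\varepsilon_0>0$ such that if $r^{-2}\int_{Q(z,r)}|u_x|^3\le\varepsilon_0$ then $(\theta r)^{-2}\int_{Q(z,\theta r)}|u_x|^3\le\tfrac12\,r^{-2}\int_{Q(z,r)}|u_x|^3$ (or a similar contraction with an added lower-order term that is absorbed). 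Iterating this across dyadic scales gives a Campanato-type decay $\rho^{-2}\int_{Q(z,\rho)}|u_x|^3\lesssim\rho^{2\gamma}$ for small $\rho$, which by a parabolic Campanato embedding yields Hölder continuity of $u_x$ (hence of $u$) on $Q(z,r/2)$.

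The decay estimate is proved by contradiction and compactness: suppose it fails, so there are suitable weak solutions $u^{(k)}$ on $Q(z_k,r_k)$ with $E_k:=r_k^{-2}\int_{Q(z_k,r_k)}|u_x^{(k)}|^3\to0$ but no $\theta$-decay holds. Rescale to unit cylinders and normalise by $E_k$; call the rescaled functions $v^{(k)}$, so $\int_{Q(0,1)}|v_x^{(k)}|^3=1$. Using the local energy inequality (available because the $u^{(k)}$ are suitable weak solutions) together with the smallness $E_k\to0$, one extracts uniform bounds on $v^{(k)}$ in energy-type spaces; the crucial point is that the quadratic nonlinearity $\partial_{xx}(u_x^2)$, after rescaling, carries a factor $E_k\to0$, so in the limit $v^{(k)}\to v$ where $v$ solves the \emph{linear} biharmonic heat equation $v_t+v_{xxxx}=0$. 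Linear estimates give the clean decay $(\theta)^{-2}\int_{Q(0,\theta)}|v_x|^3\le C\theta^{2\gamma}$ for the limit, and choosing $\theta$ small enough beats the constant; passing this back along the (strongly convergent) subsequence contradicts the failure of decay. The \emph{nonlinear parabolic Poincaré inequality} advertised in the abstract is what is needed here to control the pressure-type / lower-order terms arising from $\partial_{xx}(u_x^2)$ in the rescaled energy inequality — this is the technical heart, replacing the role played by the (sign-changing, hence unusable) biharmonic heat kernel in a direct Caffarelli–Kohn–Nirenberg approach.

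For part (ii) the plan is to \emph{deduce} the two alternative criteria from (i) by Hölder/interpolation and Poincaré-type estimates on cylinders, exploiting $\int_\T u=0$. For the first alternative, one shows $r^{-2}\int_{Q(z,r)}|u_x|^3$ is controlled (as $r\to0$) by a power of $\limsup_r r^{-1}\int_{Q(z,r)}u_{xx}^2$ times bounded factors coming from the global energy inequality, so that smallness of the latter forces the hypothesis of (i) at some small scale $\rho$. For the second alternative, one interpolates $u_x$ between $u$ and $u_{xx}$ (a parabolic Gagliardo–Nirenberg inequality on the cylinder) to bound $r^{-2}\int_{Q}|u_x|^3$ by a product of $\bigl(\sup_s r^{-1}\int_{B_r}u(s)^2\bigr)$ to a positive power and globally bounded quantities; again smallness propagates. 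In both cases Theorem (i) then gives Hölder continuity on $Q(z,\rho/2)$.

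The main obstacle I anticipate is the compactness step inside the decay estimate: one must show the rescaled sequence $v^{(k)}$ converges \emph{strongly enough} in $L^3_{x,t}(Q(0,\theta))$ for $v_x^{(k)}$ to pass the decay inequality to the limit, and one must verify that the limit $v$ is genuinely a (smooth) solution of the linear biharmonic heat equation — which requires that the nonlinear term really does vanish in the limit and that no concentration of $|v_x^{(k)}|^3$ escapes. This is precisely where the nonlinear parabolic Poincaré inequality is indispensable, since the sign-changing biharmonic kernel blocks the usual route of representing solutions by convolution; establishing and correctly applying that inequality, and checking it is strong enough to close the bootstrap, is where the real work lies.
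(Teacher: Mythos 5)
Your plan for part (i) is essentially the paper's argument: blow-up by contradiction around cylinders where $Y(z,r)=r^{-2}\int_{Q(z,r)}|u_x|^3$ is small, normalisation so that the rescaled nonlinearity carries a vanishing factor $\varepsilon_k^{1/3}$, an $L^3$ bound on the rescaled functions via the nonlinear parabolic Poincar\'e inequality, Aubin--Lions compactness from the rescaled local energy inequality, identification of the limit as a distributional solution of $v_t=-v_{xxxx}$ with interior estimates, and iteration of the resulting one-step decay. One caveat: the decay you obtain is a Morrey-type bound on $\int_{Q_\varrho}|u_x|^3$, not on the oscillation of $u_x$, so it does not give H\"older continuity of $u_x$ by Campanato, and in the parabolic setting it does not even give time-regularity of $u$ by itself. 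The paper uses the nonlinear parabolic Poincar\'e inequality a \emph{second} time at this stage, converting the decay of $Y(z,\varrho)$ into decay of $\varrho^{-5}\int_{Q(z,\varrho)}|u-u_{z,\varrho}|^3$, and only then applies the parabolic Campanato lemma to $u$; your proposal attributes the Poincar\'e inequality solely to the compactness step and glosses over this conversion.

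The genuine gap is in part (ii). You propose to deduce both criteria from (i) by interpolation ``times bounded factors coming from the global energy inequality,'' but the relevant interpolation is $Y(r)\le c\,\overline{A}(r)^{5/8}E(r)^{7/8}$ (and $W\le cA^{11/8}E^{1/8}+cA^{3/2}$), and the complementary factors are \emph{not} uniformly bounded as $r\to0$: under the hypothesis $E(r)\le\varepsilon_1$ the quantity $\overline{A}(r)=r^{-1}\sup_s\int_{B_r}(u-(u)_r)^2$ carries a factor $r^{-1}$ and is not controlled by the global energy, and under the hypothesis on $A(r)$ the quantity $E(r)\le r^{-1}\|u_{xx}\|_{L^2}^2$ likewise blows up a priori. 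So smallness of one scale-invariant quantity cannot be propagated to smallness of $Y$ by interpolation alone, and your route to the hypothesis of (i) breaks down at exactly this point. What is needed (and what the paper does, following Lin and Kukavica) is an iteration scheme built from the local energy inequality applied to $u-[u]^\sigma$, together with the Poincar\'e-type estimates: one proves $\overline{A}(r/2)+E(r/2)\le\tfrac12\overline{A}(r)+c\bigl(E(r)+E(r)^{10}\bigr)$ for the first criterion, and $A(r/2)+E(r/2)\le\tfrac12 E(r)+c\bigl(A(r)+A(r)^5\bigr)$ for the second, then iterates over dyadic scales to make $\overline{A}+E$ (resp.\ $A+E$) small, and only afterwards uses $Y\le c\,\overline{A}^{5/8}E^{7/8}$ to obtain $Y(\rho)\le\varepsilon_0$ at some small scale and invoke (i). Without this local-energy iteration controlling the ``other'' quantity at small scales, part (ii) as you sketch it would fail.
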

Here $Q(z,r)$ denotes the parabolic cylinder of radius $r$ centred at $z$ (see Section \ref{sec_notation} below), and the notion (and existence) of suitable weak solutions is discussed in Section \ref{sec_suitable_weak_solutions}. Using these conditional regularity results we deduce upper bounds on the dimension of the space-time singular set, which we take here to be
\eqnb\label{singset}\begin{split}
S&=\{(x,t)\in\T\times(0,\infty):\ u\mbox{ is not space-time H\"older continuous} \\
&\hspace{4.5cm}\qquad\qquad\qquad\mbox{in any neighbourhood of }(x,t)\}.
\end{split}\eqne
Note that $S$ is closed. Namely we show that for every compact $K\subset \TT\times (0,\infty )$
\[
d_B(S\cap K) \leq 7/6\quad \text{ and }\quad  \mathcal{P}^1(S) =0,
\]
see Corollary \ref{cor_bound_on_db} and Corollary \ref{cor_bound_on_dH}, respectively. Here $d_B$ denotes the (upper) box-counting dimension (see \eqref{minkowski_def} for the definition) and $\mathcal{P}^1$ denotes the $1$-dimensional parabolic Hausdorff measure appropriate for scaling of the equations (see \eqref{def_of_Pk} for details). The point of considering the intersection $S\cap K$ is to separate $S$ from the set $\{ (x,0)\colon x\in \TT \}$. This is a technical matter related to the definition of the box-counting dimension. Indeed, in order to deduce the bound from part (i) of the theorem above one first needs to guarantee that $Q(z,r)\subset \TT\times (0,\infty )$ for sufficiently small $r$, uniformly in $z\in S$ (see the proof of Corollary \ref{cor_bound_on_db}), and we overcome this problem by intersecting $S$ with a compact set. (A similar issue appears in the case of the Navier--Stokes equations, see Theorem 15.8 in Robinson et al. \cite{NSE_book}.) This issue does not appear in the second estimate, $\mathcal{P}^1(S)=0$, and in the case when the initial condition of a suitable weak solution is sufficiently regular (say $H^{1/2}$) as then a weak-strong uniqueness result (see Theorem 2.11 in Bl\"omker \& Romito \cite{blomkerromito09}, for example) guarantees that $u$ is smooth for small times. 

The estimate $\mathcal{P}^1(S)=0$ implies that $d_H(S)$, the Hausdorff dimension of $S$, is no greater than $1$. In the case of the Navier--Stokes equations, the corresponding results are $d_B(S\cap K)\leq 5/3$ for any compact set $K$ and $\mathcal{P}^1_{NSE } (S) =0$ (see, for example, Chapters 15 and 16 in \cite{NSE_book}), where $\mathcal{P}^1_{NSE}$ is the $1$-dimensional parabolic Hausdorff measure which respects the Navier--Stokes scaling in $\RR^3\times \RR$ (hence the subscript ``NSE''). In the case of the Navier--Stokes equations the bound on the box-counting dimension has been improved, the current sharpest bound being $d_B(S\cap K)\leq 2400/1903  (\approx 1.261) $, due to He et al. \cite{he_wang_zhou_2017}.\\

As for the definition \eqref{singset}, note that if $u$ is spatially H\"older continuous on $\T$ with some exponent $\theta$ then $u\in H^\alpha(\T)$ for all $0<\alpha<\theta$, using the Sobolev--Slobodeckii characterisation of $H^\alpha(\T)$ as the collection of all functions such that
$$
\int_\T\int_\T\frac{|f(x)-f(y)|^2}{|x-y|^{1+2\alpha}}\,\d x\,\d y<\infty
$$
(see Di Nezza, Palatucci, \& Valdinoci \cite{NePaVa:2012}); it follows (using arguments from Bl\"omker \& Romito \cite{blomkerromito09}) that if $u$ is space-time H\"older continuous on $[0,T]\times\T$ then $u$ is spatially smooth. However, the condition $u\in L^\infty_tL^\infty_x$ (which is the SGM equivalent of the $L^\infty_tL^3_x$ regularity for the Navier--Stokes equations, see Escauriaza, Seregin, \& S\v{v}er\'ak \cite{ESS_2003}) is not yet known to be sufficient for the regularity of the SGM. This is why we do not use local essential boundedness in our definition of $S$. Observe also that $S$ is closed (as its complement is open in $\TT\times (0,\infty)$).

Note that it is not entirely clear whether or not the definition in \eqref{singset} is the correct one for the SGM, since a \textit{local} conditional regularity result that guarantees spatial smoothness under a localised H\"older condition of $u$ is currently unknown. However, a closely related result has recently been proved by O\.za\'nski \cite{O_Serrin_cond}: if $u_x \in L_t^{q'}L_x^q (Q)$, where $Q$ is a cylinder and $q',q\in (1,\infty ]$ are such that $4/q'+1/q\leq 1$, then $u\in C^\infty (Q)$. This can be thought of as an analogue of the local Serrin condition in the Navier--Stokes equation, which guarantees that a weak solution $u$ satisfying $u\in L_t^{q'}L_x^q (Q) $ for $2/q'+3/q=1$ is smooth in space, see Section 8.5 in Robinson et al. \cite{NSE_book}, for example.  

The structure of the article is as follows. In the remainder of this section we introduce some notation, in Section \ref{sec_suitable_weak_solutions} we introduce the notion of suitable weak solutions and we show global-in-time existence of such solutions for any initial condition $u_0\in L^2$ with zero mean. In Section \ref{sec_PPI} we introduce a ``nonlinear parabolic Poincar\'e inequality'', which is vital for both of our partial regularity results and a concept of independent interest. We then prove two local regularity results for the surface growth model, the first in terms of $u_x$ (Section \ref{sec_1st_local_reg}) and the second one in terms of $u_{xx}$ (Section \ref{sec_2nd_local_reg}). As a consequence we can show that the (upper) box-counting dimension of the space-time singular set is no larger than $7/6$, and that its one-dimensional parabolic Hausdorff measure is zero.

\subsection{Notation}\label{sec_notation}
With $z=(x,t)$ we define the centred\footnote{Note that in many papers $Q(z,r)$ is used for the `non-anticipating' cylinder which in this case would be $B_r(x)\times(t-r^4,t)$.} parabolic cylinder $Q(z,r)$ to be
$$
Q(z,r)=(x-r,x+r)\times (t-r^4,t+r^4).
$$
Note that the `cylinder' here is in fact a rectangle. We often use the notation $Q_r$ for a cylinder $Q(z,r)$ for some $z$. Set
\eqnb\label{notation_mean_over_cylinder}
f_{z,r}\coloneqq \fint_{Q(z,r)}f=\frac{1}{|Q(z,r)|}\int_{Q(z,r)}f.
\eqne
We set $L^2=L^2(\TT )$, $H^s=H^s (\TT )$, and $W^{s,p}= W^{s,p} (\TT )$ ($s\geq 0$, $p\geq 1$), function spaces consisting of periodic functions: for example $W^{s,p}$ is the completion of the space of smooth and periodic functions on $\TT $ in the $W^{s,p}$ norm. The norm on $H^s$ is equivalent to
\[
\left(\sum_{k\in \ZZ} \left( 1 + |k|^{2s} \right) |\hat{f}(k) |^2\right)^{1/2},
\]
where $\hat{f} (k)$ denotes the $k$-th Fourier coefficient of $f$. We write $\| \cdot \|$ to denote the $L^2$ norm and we write a dot ``$\cdot$'' above a function space to denote the closed subspace of functions with zero integral so that, for example,
\[
\dot{H}^s \coloneqq \lewy f\in H^s \, : \, \int_\T f =0 \prawy , \quad s\geq 0.
\]
We will also write 
\[\| f \|_{\dot{H}^s} = \left(\sum_{k\in \ZZ}  |k|^{2s} |\hat{f}(k) |^2\right)^{1/2} = c\| \p_x^s f \|
\] to denote the $H^s$ seminorm (where the last equality holds for integer $s$). Note that if $u\in \dot{H}^s$ then
$$
\|u\|_{\dot{H}^{s'}}\le\|u\|_{\dot{H}^s}\qquad\mbox{for all }s'\le s
$$
and hence that $\|u\|_{H^{s'}}\le c_s\|\partial_x^s u\|$ if $s$ is an integer. We will use the Sobolev interpolation,
\begin{equation}\label{Sobinterp}
\|u\|_{H^s}\le\|u\|_{H^{s_1}}^\theta\|u\|_{H^{s_2}}^{1-\theta}
\end{equation}
and a similar inequality for the seminorms, where $s_1\le s\le s_2$ and $s=\theta s_1+(1-\theta)s_2$.

We write $\int \coloneqq \int_{\TT}$ and, given $T>0$, we denote the space of smooth functions that are periodic with respect to the spatial variable and compactly supported in a time interval $I$ by $C_0^\infty (\TT \times I)$. We denote any universal constant by a $C$ or $c$. 

\section{Suitable weak solutions}\label{sec_suitable_weak_solutions}
We first define the notion of a weak solution of the problem \eqref{BR}.

\begin{definition}[Weak solution]\label{def_of_weak_sol}
We say that $u$ is a (global-in-time) \emph{weak solution} of the surface growth initial value problem
\begin{equation}\label{SGM}
\begin{cases}
u_t =-u_{xxxx} - \p_{xx}u_x^2,\\
u(0)=u_0\in\dot L^2,
\end{cases}
\end{equation}
if for every $T>0$
\eqnb\label{weak_sol_regularity}
u \in L^\infty ((0,T);\dot{L}^2) \cap L^2 ((0,T);\dot{H}^2)
\eqne
and
\eqnb\label{weak_sol_distr_eq}
- \int_0^T \int \left( u \, \phi_t - u_{xx} \phi_{xx} - u_x^2 \phi_{xx} \right) =\int u_0 \phi (0)
\eqne
for all $\phi \in C_0^\infty ({\TT} \times [0,T) )$.
\end{definition}

Note that a simple procedure of cutting off $\phi$ in time (and an application of the Lebesgue Differentiation Theorem) gives that \eqref{weak_sol_distr_eq} is equivalent to
\eqnb\label{weak_sol_distr_eq_alt}
\int u(t) \phi(t) - \int_s^t \int \left( u \, \phi_t - u_{xx} \phi_{xx} - u_x^2 \phi_{xx} \right) =\int  u(s)\phi(s)
\eqne
being satisfied for all $\phi \in C_0^\infty ({\TT} \times [0,T) )$ and almost all $s,t$ with $0\leq s <t$ (including $s=0$, in which case $u(0)= u_0$).

Note also that it follows from the regularity \eqref{weak_sol_regularity} enjoyed by any weak solution  that
\eqnb\label{ux_is_in_L10/3}
u_x \in L^{10/3} ((0,T);L^{10/3}).
\eqne
Indeed, using Sobolev interpolation \eqref{Sobinterp}, for any $0\le s\le 2$ we have
$$
\|u\|_{H^s}\le\|u\|_{L^2}^{1-s/2}\|u\|_{H^2}^{s/2},
$$
and so the 1D embedding $H^s\subset L^p$ when $s=1/2-1/p$ gives
$$
\|u_x\|_{L^p}\le\|u\|_{L^2}^{(2+p)/4p}\|u\|_{H^2}^{(3p-2)/4p},
$$
and so $u_x\in L^{8p/(3p-2)}((0,T);L^p)$; in particular $u_x\in L^{10/3}((0,T);L^{10/3})$.

We now briefly recall the proof of the existence of global-in-time weak solutions to the surface growth initial value problem for any initial data $u_0 \in \dot{L}^2$. We give a sketch of the proof (due to Stein \& Winkler \cite{SteWin:05}) since it will be required in showing the local energy inequality (Theorem \ref{thm:suitable}).

\begin{theorem}[Existence of weak solutions]\label{thm_existence_of_weak_sols}
For each $u_0\in\dot L^2$ there exists at least one weak solution of the surface growth initial value problem \eqref{SGM}.
\end{theorem}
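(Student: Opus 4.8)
The plan is to use a Galerkin approximation scheme. First I would take the orthonormal basis of $\dot L^2(\TT)$ consisting of the eigenfunctions of $-\p_{xx}$ (i.e. the Fourier modes $\e^{\ri kx}$, $k\in\ZZ\setminus\{0\}$), let $P_n$ denote the projection onto the span $V_n$ of the first $n$ modes, and seek $u_n(t)=\sum_{|k|\le n} a_k(t)\e^{\ri kx}\in V_n$ solving the projected system
\[
\p_t u_n = -\p_x^4 u_n - P_n\p_{xx}(\p_x u_n)^2,\qquad u_n(0)=P_n u_0.
\]
Since the right-hand side is a locally Lipschitz (indeed polynomial) vector field in the coefficients $a_k$, Picard--Lindel\"of gives a unique local-in-time solution; global existence on $[0,T]$ then follows once we have the a priori bound below, which prevents blow-up of $\|u_n(t)\|$.

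The second step is the energy estimate. Testing the ODE system with $u_n$ (equivalently, taking the $L^2$ inner product and using that $P_n$ is self-adjoint and $P_nu_n=u_n$) gives
\[
\frac12\timeder\|u_n\|^2 + \|\p_{xx}u_n\|^2 = -\int (\p_x^2 (\p_x u_n)^2)\,u_n .
\]
The key algebraic fact — the exact analogue of the vanishing of the nonlinear term for Navier--Stokes — is that the cubic term vanishes: integrating by parts once gives $-\int(\p_x^2(\p_xu_n)^2)u_n = \int 2(\p_xu_n)^2\p_{xx}u_n$, while integrating by parts twice gives $-\int(\p_xu_n)^2\p_{xx}u_n$; comparing the two shows $\int(\p_xu_n)^2\p_{xx}u_n=0$, hence the nonlinear contribution is zero. (This uses periodicity and that $u_n$ is smooth in $x$, so there is no boundary term; on $V_n$ the projection $P_n$ causes no trouble since $u_n\in V_n$.) Therefore $\frac12\timeder\|u_n\|^2+\|\p_{xx}u_n\|^2=0$, and integrating in time yields the uniform bounds
\[
u_n\ \text{bounded in } L^\infty((0,T);\dot L^2)\cap L^2((0,T);\dot H^2),
\]
uniformly in $n$, together with the consequence $\p_x u_n$ bounded in $L^{10/3}((0,T);L^{10/3})$ by the interpolation argument recalled in the excerpt.

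The third step is passage to the limit. From the energy bounds, the equation itself gives a uniform bound on $\p_t u_n$ in, say, $L^{5/3}((0,T);H^{-2})$ (the worst term is $\p_{xx}(\p_xu_n)^2$, which is controlled using $(\p_xu_n)^2\in L^{5/3}L^{5/3}$); an Aubin--Lions compactness argument then gives a subsequence with $u_n\to u$ strongly in $L^2((0,T);\dot H^s)$ for any $s<2$ and weakly-$*$ in $L^\infty L^2$, weakly in $L^2\dot H^2$. Strong convergence of $u_n$ in $L^2H^1$ (in fact in $L^{10/3}((0,T);L^{10/3})$ for $\p_xu_n$, after interpolating the strong $L^2H^s$ convergence against the uniform $L^{10/3}$ bound) lets us pass to the limit in the quadratic term $\int u_n^2\phi_{xx}\to\int u_x^2\phi_{xx}$; the linear terms pass to the limit by weak convergence, and $P_nu_0\to u_0$ in $\dot L^2$ handles the initial data. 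This yields the weak formulation \eqref{weak_sol_distr_eq} for every test function $\phi\in C_0^\infty(\TT\times[0,T))$ (first for $\phi$ supported in finitely many modes, then by density), and $u$ satisfies the required regularity \eqref{weak_sol_regularity}. A diagonal argument over $T\to\infty$ produces a global-in-time weak solution.

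The main obstacle is the compactness/limit-passage in the nonlinear term: one must extract enough strong convergence of $\p_xu_n$ (not merely weak convergence of $u_n$) to identify the limit of $(\p_xu_n)^2$, and this is precisely where the Aubin--Lions lemma together with the $L^{10/3}$ space-time bound is needed; the rest (local ODE theory, the energy identity, weak convergence of linear terms) is routine. I would also note that uniqueness is not claimed — as with Leray--Hopf solutions of Navier--Stokes, only existence is asserted.
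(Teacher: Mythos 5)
Your Galerkin proposal is essentially correct and would prove the theorem, but it takes a genuinely different route from the paper, which follows Stein \& Winkler and constructs the solution by an implicit Euler time discretization (Rothe scheme): for each time step $\tau$ the elliptic problems \eqref{implicit_scheme} are solved via Lax--Milgram and the Leray--Schauder fixed point theorem, the discrete energy inequality comes from the same cancellation $\int(\partial_x u)^2\partial_{xx}u=0$ that you exploit, $\partial_t u^\tau$ is bounded in $L^{5/3}((0,T);(W^{2,5/2})^*)$, and Aubin--Lions plus strong convergence of $\overline{u}^\tau$ in $L^2((0,T);W^{1,\infty})$ handles the nonlinearity --- exactly parallel to your Steps~2--3. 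What the paper's choice buys is the next result, suitability (Theorem \ref{thm:suitable}): there one tests the discrete equation with $u_k^\tau\varphi$, which is an admissible $\dot H^2$ test function for the time-discretized problem, so the local energy inequality passes to the limit. In a Fourier--Galerkin scheme the analogous test function $u_n\phi$ does not belong to $V_n$ and the projection $P_n$ obstructs the localized energy computation (the same difficulty as for suitable weak solutions of the Navier--Stokes equations), so your construction proves existence but would not feed directly into the paper's proof of the local energy inequality. Two small imprecisions, neither fatal: the bound you get on $\partial_t u_n$ from $(\partial_x u_n)^2\in L^{5/3}L^{5/3}$ is naturally in $L^{5/3}((0,T);(W^{2,5/2})^*)$ (or $L^{5/3}((0,T);H^{-21/10})$ using the 1D embedding $L^{5/3}\subset H^{-1/10}$), not $L^{5/3}((0,T);H^{-2})$, which changes nothing in the Aubin--Lions step; and in the limit passage for the nonlinear term you mean $\int (\partial_x u_n)^2\phi_{xx}\to\int u_x^2\phi_{xx}$ rather than $\int u_n^2\phi_{xx}$, presumably a typo.
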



\begin{proof}[Proof (sketch).]
Fix $T>0$ and take $N\in \NN$, let $\tau \coloneqq T/N$ denote the time step, set $u_0^\tau \coloneqq u_0$ and, for $k=1,\ldots , N$, let $u^\tau_k \in \dot{H}^2$ be a solution of the implicit Euler scheme
\eqnb\label{implicit_scheme}
\int \frac{u_k^\tau - u_{k-1}^\tau }{\tau } \psi = -\int \p_{xx} u_k^\tau  \psi_{xx} - \int \left( \p_x u_k^\tau \right)^2 \psi_{xx}
\eqne
for all $\psi \in \dot{H}^2$. The existence of such $u_k^\tau$ can be shown using the Lax--Milgram Lemma and the Leray--Schauder fixed point theorem.

For $t\in [(k-1)\tau , k\tau )$, $k\in \{1,\ldots , N\}$, let
\[
\begin{cases}
u^\tau (x,t) \coloneqq \frac{k\tau -t}{\tau } u_{k-1}^\tau (x) + \frac{t-(k-1)\tau }{\tau } u_k^\tau (x),\\
\overline{u}^\tau (x,t) \coloneqq u_k^\tau (x).
\end{cases}
\]
In other words $u^\tau$ denotes the linear approximation between the neighbouring $u_k^\tau$'s, and $\overline{u}^\tau$ denotes the next $u_k^\tau$.

Letting $\phi\coloneqq u_k^\tau$ in \eqref{implicit_scheme} and observing the cancellation
\[\int \left( \p_x u_k^\tau \right)^2 \p_{xx} u_k^\tau =0\] we obtain
\[
 \int \left( u_k^\tau \right)^2 + \tau \int \left( \p_{xx} u_k^\tau \right)^2 = \int \left( u_{k-1}^\tau \right)^2 , \qquad k\geq 1,
\]
from which, by summing in $k$, we obtain the energy inequality for $\overline{u}^\tau$,
\eqnb\label{EI_overline_u}
\| \overline{u}^\tau (t) \|^2 + \int_0^t \| \p_{xx} \overline{u}^\tau (s) \|^2 \d s \leq  \| u_0 \|^2,\qquad t\in (0,T) ,
\eqne
and similarly for $u^\tau$,
\eqnb\label{EI_u}
\| {u}^\tau (t) \|^2 + \int_0^t \| \p_{xx} {u}^\tau (s) \|^2 \d s \leq  C \| u_0 \|^2,\qquad t\in (0,T) .
\eqne
Furthermore, observe that for every $\psi\in\dot H^2$ and every $t\in[0,T)$ we have
\[
\int \p_t u^\tau (t) \psi = \int \frac{u_k^\tau - u_{k-1}^\tau }{\tau } \psi  = - \int  \left( \p_{xx} \overline{u}^\tau (t) + (\p_x \overline{u}^\tau (t))^2 \right)\psi_{xx},
\]
where $k\geq 1$ is such that $t\in [(k-1)\tau, k\tau )$. Thus, since each $u_k^\tau$, $k\geq 0$, has zero mean, the above equality holds in fact for all $\phi \in H^2$, that is
\eqnb\label{prepre_weak_form}
\int \p_t u^\tau (t) \psi  = - \int  \left( \p_{xx} \overline{u}^\tau (t) + (\p_x \overline{u}^\tau (t))^2 \right)\psi_{xx} , \qquad \psi \in {H}^2, t\in [0,T).
\eqne
Taking $\psi \coloneqq \phi (t)$ for some $\phi \in C_0^\infty (\TT \times [0,T))$ and integrating in time gives
\eqnb\label{pre_weak_form}
\int_0^T \int \p_t u^\tau  \phi  = - \int_0^T \int  \left( \p_{xx} \overline{u}^\tau  + (\p_x \overline{u}^\tau (t))^2 \right) \phi_{xx} , \qquad \phi \in C_0^\infty (\TT \times [0,T)).
\eqne
From here one can apply H\"older's inequality, the Sobolev embedding $H^{1/5} \subset L^{10/3}$, the Sobolev interpolation \eqref{Sobinterp}, \eqref{EI_overline_u} and a standard density argument to obtain a uniform (in $\tau$) estimate on $\p_t u^\tau$ in $L^{5/3} ((0,T);(W^{2,5/2})^*)$. This, the energy inequalities \eqref{EI_overline_u}, \eqref{EI_u}, and the Aubin--Lions lemma (see Theorem 2.1 in Section 3.2 in Temam \cite{temam}, for example) give the existence of a sequence $\tau_n\to 0^+$ and a $u\in L^2((0,T);W^{1,\infty })$  such that
\eqnb\label{conv1}
\begin{split}
u^{\tau_n} \to \,&u \qquad \text{ in } L^2((0,T);W^{1,\infty }),\\
 \overline{u}^{\tau_n}, u^{\tau_n} \rightharpoonup \,&u\qquad \text{ in } L^2((0,T);H^2),\\
  \overline{u}^{\tau_n}, u^{\tau_n} \stackrel{*}{\rightharpoonup} \,&u\qquad \text{ in } L^\infty ((0,T);L^2)
\end{split}
\eqne
as $\tau_n \to 0$. Here ``$\rightharpoonup$'' and ``$\stackrel{*}{\rightharpoonup}$'' denote the weak and weak-$*$ convergence, respectively. The fact that both $\overline{u}^{\tau_n}$ and $u^{\tau_n}$ converge to the same limit function follows from the convergence
\[
\| u^{\tau_n}  - \overline{u}^{\tau_n} \|_{L^2((0,T);W^{1,\infty })} \to 0 \quad \text{ as }\tau_n \to 0,
\]
which can be shown using the first convergence from \eqref{conv1}; see Lemma 2.3 in King et al. \cite{king_stein_winkler} for details.

The limit function $u$ is a weak solution to the surface growth initial value problem since the regularity requirement \eqref{weak_sol_regularity} follows from the convergence above and \eqref{weak_sol_distr_eq} follows by taking the limit $\tau_n \to 0^+$ in \eqref{pre_weak_form} after integration by parts in time of the left-hand side.\end{proof}
As with the partial regularity theory for the Navier--Stokes equations, we make key use of a local energy inequality. This gives rise to the notion of ``suitable weak solutions'', which we now define.

\begin{definition}[Suitable weak solution]\label{def_of_suitable_weak_sol}
We say that a weak solution is \emph{suitable} if the local energy inequality
\eqnb\label{LEI_alt_form}\begin{split}
\frac{1}{2} \int u(t)^2 \phi(t) + \int_0^t \int u_{xx}^2\phi &\leq \int_0^t \int \left(\frac{1}{2}(\phi_t-\phi_{xxxx})u^2 \right.\\
&\hspace{2cm}\left.+2u_x^2\phi_{xx}-\frac{5}{3}u_x^3\phi_x-u_x^2u\phi_{xx} \right)
\end{split}
\eqne
holds for all $\phi \in C_0^\infty (\TT \times (0,\infty ) ; [0,\infty ))$ and almost all $t\geq 0$.
\end{definition}

Note that the local energy inequality is a weak form of the inequality
\[
u \left( u_t + u_{xxxx} + \p_{xx} u_x^2 \right) \leq 0;
\]
that is \eqref{LEI_alt_form} can be obtained (formally) by multiplying the above inequality by $\phi$ and integrating by parts. We note that \eqref{LEI_alt_form} remains true if $u$ is replaced by $u-K$ for any $K\in \RR$. Indeed, 
multiplying  \eqref{weak_sol_distr_eq_alt} with $s\coloneqq 0$ by $K$ (and integrating by parts the term with four $x$ derivatives) we obtain
\[
-K \int u(t) \phi (t) = -K \int_0^t \int \left( u \phi_t - u \phi_{xxxx} - u_x^2 \phi_{xx} \right) .
\]
Thus noting that  
\[
\frac{K^2 }{2} \int \phi(t) = \int_0^t\int \frac{1}{2} \left( \phi_t - \phi_{xxxx} \right) K^2 
\]
(since $\phi$ has compact support in $\TT\times (0,\infty )$) we obtain the claim by adding the above two equalities from \eqref{LEI_alt_form}.

By adapting the method outlined above in the proof of the existence of a weak solution, we now show that this solution also satisfies the local energy inequality and is therefore `suitable'.

\begin{theorem}\label{thm:suitable}
The weak solution given by Theorem \ref{thm_existence_of_weak_sols} is suitable.
\end{theorem}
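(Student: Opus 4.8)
The plan is to run, at the level of the time-discrete scheme \eqref{implicit_scheme}, the formal computation ``multiply $u_t+u_{xxxx}+\p_{xx}u_x^2\le0$ by $\phi u$ and integrate by parts''; passing to the limit as $\tau\to0$ then converts the resulting discrete identity into \eqref{LEI_alt_form}, the inequality arising (as is typical) from the weak lower semicontinuity of the dissipation term and from discarding a non-negative ``numerical dissipation''.

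Fix $\phi\in C_0^\infty(\TT\times(0,\infty);[0,\infty))$ and $T$ so large that $\phi\equiv0$ for $t\ge T$, and write $\phi_k\coloneqq\phi(\cdot,k\tau)$. Because each $u_k^\tau$ has zero mean, \eqref{implicit_scheme} extends to all test functions in $H^2$ — this is exactly \eqref{prepre_weak_form} — so we may test it with $\psi\coloneqq\phi_k u_k^\tau\in H^2$. On the left, the elementary identity $(a-b)a=\tfrac12(a^2-b^2)+\tfrac12(a-b)^2$ gives
\[
\int\frac{u_k^\tau-u_{k-1}^\tau}{\tau}\,\phi_k u_k^\tau=\frac1{2\tau}\int\phi_k\big((u_k^\tau)^2-(u_{k-1}^\tau)^2\big)+\frac1{2\tau}\int\phi_k(u_k^\tau-u_{k-1}^\tau)^2,
\]
and the last term is $\ge0$ since $\phi_k\ge0$, hence may be dropped. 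On the right we expand $\p_{xx}(\phi_k u_k^\tau)$ and integrate by parts, using the cancellations $\int(\p_x u_k^\tau)^2\p_{xx}u_k^\tau=0$ and $\int\p_x\phi_k\,\p_x u_k^\tau\,\p_{xx}u_k^\tau=-\tfrac12\int\p_{xx}\phi_k(\p_x u_k^\tau)^2$, always transferring surplus derivatives onto the smooth factor $\phi_k$ (so that $u_k^\tau$ is never differentiated more than twice, which is all its $\dot H^2$-regularity permits); this produces exactly the five integrands of \eqref{LEI_alt_form}, with $(\phi,u,u_x)$ replaced by $(\phi_k,u_k^\tau,\p_x u_k^\tau)$, together with the dissipation term $\int\phi_k(\p_{xx}u_k^\tau)^2$ on the left. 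Multiplying by $\tau$, summing over $k=1,\dots,N$, and applying Abel summation in the telescoping term — whose boundary contributions are $\tfrac12\int\phi_N(u_N^\tau)^2=0$ and $\tfrac12\int\phi_1(u_0^\tau)^2\le\tfrac12\|\phi(\cdot,\tau)\|_\infty\|u_0\|^2\to0$ — we obtain, writing $\phi^\tau$ for the function equal to $\phi_k$ on $[(k-1)\tau,k\tau)$ and $D^\tau\phi$ for the forward difference quotient $\tau^{-1}(\phi_{k+1}-\phi_k)$ (which converges uniformly to $\phi_t$),
\[
\int_0^T\!\!\int\phi^\tau(\p_{xx}\overline{u}^\tau)^2\le\int_0^T\!\!\int\Big(\tfrac12 D^\tau\phi\,(\overline{u}^\tau)^2-\tfrac12(\phi^\tau)_{xxxx}(\overline{u}^\tau)^2+2(\phi^\tau)_{xx}(\p_x\overline{u}^\tau)^2-\tfrac53(\phi^\tau)_x(\p_x\overline{u}^\tau)^3-(\phi^\tau)_{xx}(\p_x\overline{u}^\tau)^2\overline{u}^\tau\Big)+o(1).
\]

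Now pass to the limit along $\tau_n\to0$ from \eqref{conv1}. Since $\|u^{\tau_n}-\overline{u}^{\tau_n}\|_{L^2(0,T;W^{1,\infty})}\to0$, both $u^{\tau_n}$ and $\overline{u}^{\tau_n}$ converge strongly to $u$ in $L^2(0,T;W^{1,\infty})$; in particular $\p_x\overline{u}^{\tau_n}\to u_x$ and $\overline{u}^{\tau_n}\to u$ in $L^2(0,T;L^\infty)$, which (together with $(\phi^\tau)_{xx},(\phi^\tau)_{xxxx}\to\phi_{xx},\phi_{xxxx}$ and $D^\tau\phi\to\phi_t$ uniformly) disposes of the quadratic terms and the telescoping term. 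The cubic and mixed terms require more: as in the derivation of \eqref{ux_is_in_L10/3}, the energy bound \eqref{EI_overline_u} and interpolation show $\p_x\overline{u}^{\tau_n}$ is bounded in $L^{10/3}((0,T)\times\TT)$, and since it converges a.e.\ on $(0,T)\times\TT$ (along a further subsequence) it converges strongly in $L^q((0,T)\times\TT)$ for every $q<10/3$, in particular in $L^3$; hence $(\p_x\overline{u}^{\tau_n})^3\to u_x^3$ and, using also $\overline{u}^{\tau_n}\to u$ in $L^3$, $(\p_x\overline{u}^{\tau_n})^2\overline{u}^{\tau_n}\to u_x^2u$ in $L^1((0,T)\times\TT)$. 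For the dissipation term, $\p_{xx}\overline{u}^{\tau_n}\rightharpoonup u_{xx}$ weakly in $L^2((0,T)\times\TT)$, so $\sqrt{\phi}\,\p_{xx}\overline{u}^{\tau_n}\rightharpoonup\sqrt{\phi}\,u_{xx}$ weakly there; weak lower semicontinuity of the $L^2$ norm, combined with $\|\phi^\tau-\phi\|_\infty\to0$ and the uniform $L^2(0,T;H^2)$ bound \eqref{EI_overline_u}, then gives $\int_0^T\!\int\phi\,u_{xx}^2\le\liminf_n\int_0^T\!\int\phi^\tau(\p_{xx}\overline{u}^{\tau_n})^2$. Collecting these facts yields \eqref{LEI_alt_form} in the special case where $\phi$ vanishes for $t\ge T$ (so the boundary term $\tfrac12\int u(t)^2\phi(t)$ is absent and $\int_0^t$ becomes $\int_0^\infty$); the general statement follows by applying this to $\phi(x,s)\chi_\delta(s)$, where $\chi_\delta$ is a smooth non-increasing cut-off equal to $1$ for $s\le t$ and $0$ for $s\ge t+\delta$, and letting $\delta\to0$: the term $\tfrac12\int\phi\,\chi_\delta' u^2$ tends to $-\tfrac12\int u(t)^2\phi(t)$ at every Lebesgue point of the $L^1_{\mathrm{loc}}$ map $s\mapsto\int u(s)^2\phi(\cdot,s)$, i.e.\ for a.a.\ $t$, and all other integrals converge to their restrictions to $(0,t)$.

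The main obstacle is exactly the nonlinear term $u_x^3\phi_x$: the convergence supplied directly by \eqref{conv1} (strong in $L^2(0,T;W^{1,\infty})$, weak in $L^2(0,T;H^2)$) is not by itself enough to pass to the limit in a cubic quantity, and one must upgrade it — via the $L^{10/3}$ interpolation bound plus a.e.\ convergence — to strong $L^3$ convergence of $\p_x\overline{u}^{\tau_n}$. A secondary, purely bookkeeping, difficulty is verifying that the integrations by parts in the discrete identity reproduce the precise coefficients $2,-\tfrac53,-1$ of \eqref{LEI_alt_form} while keeping every derivative of $u_k^\tau$ of order at most $2$.
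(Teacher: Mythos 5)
Your proposal is correct and follows essentially the same route as the paper: test the implicit Euler scheme with $\phi\, u_k^\tau$, drop the non-negative discrete dissipation term coming from the convexity identity, perform the same integrations by parts to get the coefficients $2,-\tfrac{5}{3},-1$, and pass to the limit using the convergences \eqref{conv1}, with weak lower semicontinuity for $\int \phi\, u_{xx}^2$ and a final cut-off in time to recover \eqref{LEI_alt_form} for a.e.\ $t$. The only cosmetic differences (Abel summation with $\phi(\cdot,k\tau)$ rather than integrating the continuous-in-time test function, and upgrading $\p_x\overline{u}^{\tau_n}$ to strong $L^3$ convergence via the uniform $L^{10/3}$ bound instead of using the $L^2_t W^{1,\infty}_x$ convergence together with the energy bounds) do not alter the argument.
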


\begin{proof} Fix $\phi \in C_0^\infty (\TT \times (0,T) )$ with $\phi\geq 0$. We will show that 
\eqnb\label{LEI_to_show} \int_0^T \int u_{xx}^2\phi \leq \int_0^T \int \left(\frac{1}{2}(\phi_t-\phi_{xxxx})u^2 +2u_x^2\phi_{xx}-\frac{5}{3}u_x^3\phi_x-u_x^2u\phi_{xx} \right).
\eqne
This is equivalent to \eqref{LEI_alt_form}, which can be shown using a cut-off procedure (in time), similarly to the equivalence between \eqref{weak_sol_distr_eq} and \eqref{weak_sol_distr_eq_alt}.\\

Let $n$ be large enough so that $\phi(t)\equiv 0$ for $t\in (0,2\tau_n ) \cup (T-2\tau_n ,T)$. For brevity we will write $\tau$ in place of $\tau_n$. Given $t\in [0,T)$ set $\varphi \coloneqq \phi(t)$ and let $k$ be such that $t\in [(k-1)\tau , k\tau )$. Let $\psi \coloneqq u^\tau_k \varphi $ in \eqref{prepre_weak_form} to obtain
\eqnb\label{preLEI}
\int \frac{u_k^\tau - u_{k-1}^\tau }{\tau } u_k^\tau \varphi = -\int \p_{xx} u_k^\tau  \left( u_k^{\tau} \varphi \right)_{xx} - \int \left( \p_x u_k^\tau \right)^2 \left( u_k^{\tau} \varphi \right)_{xx} .
\eqne
Since integration by parts gives for any $v\in H^2$
\begin{align*}
\int v_{xx} v_x \varphi_x &= -\frac{1}{2} \int v_x^2 \varphi_{xx},\\
\int v_{xx} v \varphi_{xx} &= -\int v_x^2 \varphi_{xx} - \int v_x v \varphi_{xxx} = -\int v_x^2 \varphi_{xx} +\frac{1}{2} \int v^2  \varphi_{xxxx},\\
\int v_x^2 v_{xx} \varphi &= -\frac{1}{3} \int v_x^3 \varphi_x,
\end{align*}
the first term on the right-hand side of \eqref{preLEI} can be written in the form
\begin{equation*}
\begin{split}
-\int \p_{xx} u_k^\tau  \left( u_k^{\tau} \varphi \right)_{xx}& = -\int (\p_{xx} u_k^\tau )^2 \phi -2 \int \p_{xx} u_k^\tau \p_x u_k^\tau \varphi_x - \int \p_{xx} u_k^\tau \, u_k^\tau  \varphi_{xx}\\
&= -\int (\p_{xx} u_k^\tau )^2 \phi +2 \int (\p_{x} u_k^\tau)^2 \varphi_{xx} - \frac{1}{2} \int ( u_k^\tau )^2   \varphi_{xxxx}.
\end{split}
\end{equation*}
Similarly, the second term in \eqref{preLEI} can be expanded into
\begin{equation*}
\begin{split}
- \int \left( \p_x u_k^\tau \right)^2 \left( u_k^{\tau} \varphi \right)_{xx} &= - \int \left(\p_x u_k^\tau  \right)^2 \p_{xx} u_k^\tau  \varphi - 2 \int \left(\p_x u_k^\tau  \right)^2 \p_x u_k^\tau \varphi_x\\
&\qquad\qquad\qquad - \int \left(\p_x u_k^\tau  \right)^2 u_k^\tau \varphi_{xx} \\
&= -\frac{5}{3} \int \left(\p_x u_k^\tau  \right)^3  \varphi_x - \int \left(\p_x u_k^\tau  \right)^2 u_k^\tau \varphi_{xx} .
\end{split}
\end{equation*}
On the other hand, using the inequality $ab\leq a^2/2+b^2/2$ we can bound the left-hand side of \eqref{preLEI} from below by writing
\begin{equation*}\begin{split}
\int \frac{u_k^\tau - u_{k-1}^\tau }{\tau } u_k^\tau \varphi &= \frac{1}{\tau } \|   u_k^\tau \sqrt{ \varphi } \|^2 - \frac{1}{\tau } \int u_k^\tau \sqrt{ \varphi } \, u_{k-1}^\tau \sqrt{ \varphi }\\
&\geq \frac{1}{2\tau } \|   u_k^\tau \sqrt{ \varphi } \|^2 - \frac{1}{2\tau }  \| u_{k-1}^\tau \sqrt{ \varphi } \|^2.
\end{split}
\end{equation*}
Substituting these calculations into \eqref{preLEI} gives
\begin{equation*}\begin{split}
\frac{1}{2\tau } &\| u_k^\tau \sqrt{ \varphi } \|^2 -\frac{1}{2\tau } \| u_{k-1}^\tau  \sqrt{ \varphi }\|^2 + \int (\p_{xx} u_k^\tau )^2 \varphi \\
&\leq  \int \left( 2 (\p_{x} u_k^\tau)^2 \varphi_{xx} - \frac{1}{2} ( u_k^\tau )^2 \varphi_{xxxx} -\frac{5}{3}  \left(\p_x u_k^\tau  \right)^3  \varphi_x -  \left(\p_x u_k^\tau  \right)^2 u_k^\tau \varphi_{xx} \right).
\end{split}
\end{equation*}
Integration in time gives
\eqnb\label{pre1_LEI}\begin{split}
\frac{1}{2\tau } &\int_0^T \| \overline{u}^\tau (t) \sqrt{ \phi (t) } \|^2 \d t -\frac{1}{2\tau } \int_0^T \| \overline{u}^\tau (t-\tau ) \sqrt{ \phi (t) } \|^2\d t + \int_0^T\int (\overline{u}_{xx}^\tau )^2 \phi \\
&\leq \int_0^T \int \left( 2 (\overline{u}_x^\tau)^2 \phi_{xx} - \frac{1}{2} ( \overline{u}^\tau )^2 \phi_{xxxx} -\frac{5}{3}  \left(\overline{u}_x^\tau  \right)^3 \phi_x -  \left(\overline{u}_x^\tau  \right)^2 \overline{u}^\tau \phi_{xx} \right).
\end{split}
\eqne
Observe that the convergence $\overline{u}^\tau \to u$ in $L^2 ((0,T);W^{1,\infty } )$ (see \eqref{conv1}) gives the convergence of the right-hand side above to the respective expression with $u$,
\[
\int_0^T \int \left( 2 u_x^2 \phi_{xx} - \frac{1}{2} u^2 \phi_{xxxx} -\frac{5}{3} u_x^3  \phi_x -  u_x^2 u \phi_{xx} \right).
\]
Moreover, the weak convergence $\overline{u}^\tau \rightharpoonup u$ in $L^2 ((0,T);H^2)$ (see \eqref{conv1}) gives in particular the weak convergence
\[
\overline{u}^\tau_{xx} \sqrt{\phi } \rightharpoonup u_{xx} \sqrt{ \phi } \quad \text{ in } L^2((0,T);L^2 ) \text{ as } \tau \to 0,
\]
and thus, from properties of weak limits,
\[
\int_0^T \int u_{xx}^2 \phi \leq \liminf_{\tau \to 0 } \int_0^T \int \left( \overline{u}^\tau_{xx} \right)^2 \phi .
\]
As for the first two terms in \eqref{pre1_LEI}, they can be written in the form
\eqnb\label{temp1_first_two_terms}\begin{split}
\int_0^T  &\frac{\| \overline{u}^\tau (t) \sqrt{\phi (t) } \|^2 - \| \overline{u}^\tau (t-\tau ) \sqrt{\phi (t-\tau ) } \|^2 }{2\tau } \d t \\
&\hspace{2cm}- \frac{1}{2 } \int_0^T \left( ( \overline{u}^\tau (t-\tau) )^2 , \frac{\phi (t) - \phi (t-\tau )}{\tau } \right)\d t,
\end{split}
\eqne
where $(\cdot, \cdot )$ denotes the $L^2$ product. Observe that the first term vanishes due to the change of variable $t'\coloneqq t-\tau$ and the fact that $\phi $ vanishes on time intervals $(0,2\tau)$ and $(T-2\tau , T)$. A similar change of variables in the second term gives that
\eqref{temp1_first_two_terms} equals
\[
-\frac{1}{2} \int_0^T \left( ( \overline{u}^\tau (t) )^2 , \frac{\phi (t+\tau ) - \phi (t )}{\tau } \right)\d t .
\]
Thus the convergence $\overline{u}^\tau \to u$ in $L^2 ((0,T);W^{1,\infty } )$ and the fact that
\[
\frac{\phi (x,t+\tau ) - \phi (x,t )}{\tau } \to \phi_t (x,t) \qquad \text{ uniformly in } (x,t)\in \TT \times (0,T)
\]
give that \eqref{temp1_first_two_terms} converges to
\[
 -\frac{1}{2} \int_0^T \int u^2 \phi_t
\]
as $\tau \to 0^+$. Hence, altogether, taking $\liminf_{\tau \to 0^+}$ (recall we write $\tau$ in place of $\tau_n$) in \eqref{pre1_LEI} gives the local energy inequality \eqref{LEI_to_show}, as required. 
\end{proof}

\section{A `nonlinear' parabolic Poincar\'e inequality}\label{sec_PPI}

Here we prove a parabolic version of the Poincar\'e inequality, which is a key ingredient in the proof of the partial regularity results that follow.

\begin{theorem}[Parabolic Poincar\'e inequality]\label{PPI}
Let $\eta \in [0,1]$, $r\in (0,1)$ and let $Q(z_0,r)$ be a cylinder, where $z_0=(x_0,t_0)$. If a function $u$ satisfies
\eqnb\label{distr_form_for_poincare}
\int_{B_{r} (x_0)} (u(t)-u(s)) \phi  = \int_s^t \int_{B_{r}(x_0)}  u_x \phi_{xxx} -\eta  \int_s^t \int_{B_{r}(x_0)} u_x^2 \phi_{xx}
\eqne
for all $\phi \in C_0^\infty ( B_{r}(x_0) )$ and almost every $s,t\in (-r^4,r^4)$ with $s<t$, then
\eqnb\label{PPoinc}
\frac{1}{r^5}\int_{Q(z_0,r/2)}|u-u_{z_0,r/2}|^3\le c_{pp} \left( \Y (z_0,r) +\eta \Y (z_0,r)^2 \right),
\eqne
where 
\eqnb\label{def_of_Y}
\Y (z_0,r) \coloneqq \frac{1}{r^2} \int_{Q(z_0,r)} |u_x|^3
\eqne
and $c_{pp}>0$ is an absolute constant.
\end{theorem}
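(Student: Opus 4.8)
The plan is to follow the classical strategy for parabolic Poincaré inequalities: compare $u(t,\cdot)$ with its spatial average, compare that spatial average with a constant in time, and control the time oscillation of the spatial average using the weak formulation \eqref{distr_form_for_poincare}. The key structural point is that the right-hand side of \eqref{distr_form_for_poincare} only involves $u_x$ (and $u_x^2$), so the natural quantity to land on is precisely $\Y(z_0,r)$ and $\Y(z_0,r)^2$.

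First I would reduce, by translation, to $z_0=(0,0)$ and write $B=B_r$, $B'=B_{r/2}$. For a fixed time slice, the one-dimensional Poincaré inequality gives $\int_{B'}|u(t)-\bar u_{B'}(t)|^3 \le c\, r^3 \int_{B'}|u_x(t)|^3$, where $\bar u_{B'}(t)=\fint_{B'}u(t)$; integrating in $t$ over $(-(r/2)^4,(r/2)^4)$ and recalling the definition \eqref{def_of_Y} of $\Y$ already yields a bound of the form $c\, r^{5}\,\Y(z_0,r)$ for the contribution of $\int_{Q_{r/2}}|u-\bar u_{B'}(t)|^3$. It then remains to estimate $\int_{Q_{r/2}}|\bar u_{B'}(t)-u_{z_0,r/2}|^3$, i.e. the oscillation in time of the running spatial average; here $u_{z_0,r/2}$ is the full space-time average, which is itself a time-average of $\bar u_{B'}(t)$, so by Jensen it suffices to bound $\fint\fint |\bar u_{B'}(t)-\bar u_{B'}(s)|^3\,\d s\,\d t$.

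The heart of the argument is to control $|\bar u_{B'}(t)-\bar u_{B'}(s)|$ using \eqref{distr_form_for_poincare}. The obstacle is that \eqref{distr_form_for_poincare} is tested only against $\phi\in C_0^\infty(B_r)$, whereas $\bar u_{B'}(t)-\bar u_{B'}(s)$ corresponds to testing against the (non-admissible, non-smooth) function $\frac{1}{|B'|}\mathbf 1_{B'}$. The standard fix is to choose a fixed cutoff $\phi\in C_0^\infty(B_r;[0,1])$ with $\phi\equiv 1$ on $B_{r/2}$ and $|\phi^{(j)}|\le c\,r^{-j}$, use it in \eqref{distr_form_for_poincare} to get
\[
\int_{B_r}(u(t)-u(s))\phi = \int_s^t\!\!\int_{B_r} u_x\phi_{xxx} - \eta\int_s^t\!\!\int_{B_r} u_x^2\phi_{xx},
\]
and then write $\bar u_{B'}(t)-\bar u_{B'}(s) = \frac{1}{|B'|}\big[\int_{B_r}(u(t)-u(s))\phi\big] - \frac{1}{|B'|}\int_{B_r}(u(t)-u(s))(\phi-\mathbf 1_{B'})$. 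The second term is supported in $B_r\setminus B_{r/2}$ and must be reabsorbed: one bounds it by $c\,r^{-1}\int_{B_r\setminus B_{r/2}}(|u(t)-\bar u_{B'}(t)|+|u(s)-\bar u_{B'}(s)|) + (\text{similar with }s)$, i.e. again by spatial oscillations already controlled by the Poincaré step above. (Equivalently, one works with $\tilde u = u - u_{z_0,r/2}$ throughout, which is harmless since the hypothesis \eqref{distr_form_for_poincare} is invariant under adding constants to $u$ and $\Y$ is too.)

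For the main term: using $|\phi_{xxx}|\le c\,r^{-3}$, $|\phi_{xx}|\le c\,r^{-2}$ and Hölder in space with the measure of $B_r$ ($\sim r$),
\[
\Big|\frac{1}{|B'|}\int_s^t\!\!\int_{B_r} u_x\phi_{xxx}\Big| \le \frac{c}{r^{4}}\int_s^t\Big(\int_{B_r}|u_x|^3\Big)^{1/3} r^{2/3}\,\d\sigma,
\]
and similarly the $\eta$-term gives $\frac{c\,\eta}{r^{3}}\int_s^t\big(\int_{B_r}|u_x|^3\big)^{2/3}r^{1/3}\,\d\sigma$. Now apply Hölder in time over an interval of length $\le 2r^4$: the first bound becomes $\le c\,r^{-4}\cdot r^{2/3}\cdot (r^4)^{2/3}\cdot\big(\int_{Q_r}|u_x|^3\big)^{1/3} = c\,r^{-2/3}\big(\int_{Q_r}|u_x|^3\big)^{1/3} = c\,\Y^{1/3}$, after inserting \eqref{def_of_Y} (which carries the $r^{-2}$). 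Cubing, integrating the resulting (constant-in-$x$) quantity over $Q_{r/2}$ (volume $\sim r^5$) and dividing by $r^5$ reproduces $c\,\Y(z_0,r)$. The $\eta$-term is handled the same way and yields $c\,\eta\,\Y(z_0,r)^2$ — note the exponent $2$ and the absence of any extra power of $\eta$ match \eqref{PPoinc} exactly, which is the sanity check that the scaling is right. Combining the two oscillation bounds and the triangle inequality $|a+b|^3\le 4(|a|^3+|b|^3)$ gives \eqref{PPoinc} with an absolute constant $c_{pp}$.

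I expect the main obstacle to be the bookkeeping in the cutoff argument — specifically, showing that the boundary-layer term $\int_{B_r\setminus B_{r/2}}(u-u_{z_0,r/2})(\phi-\mathbf 1_{B'})$ is genuinely absorbed into the $\Y$-bound rather than producing an uncontrolled $\fint_{Q_r}|u-u_{z_0,r/2}|^3$ term; the resolution is that on the annulus this spatial oscillation is bounded by $\int_{B_r}|u_x|^3$ via the 1D Poincaré/fundamental-theorem-of-calculus estimate at each time slice, so it too collapses into $\Y$. A secondary technical point is the rigorous passage from "for a.e. $s,t$" in \eqref{distr_form_for_poincare} to an estimate holding after the double time-average, which is routine (Fubini and the fact that a bound holding a.e. survives integration).
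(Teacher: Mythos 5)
Your proposal is correct and its estimates all scale out exactly as in the statement, but it takes a slightly different technical route from the paper. The paper also splits the left-hand side into a spatial oscillation plus the time oscillation of a spatial mean, and also controls the latter by testing \eqref{distr_form_for_poincare} with a spatial cutoff; the difference is that the paper works throughout with $\sigma$-weighted means $u^\sigma_r(t)=\int u(t)\sigma/\int\sigma$ and $[u]^\sigma_r$, so the tested quantity \emph{is} the mean and no boundary-layer term ever appears, and it produces the cube by choosing the test function $\phi=\sigma\,(u^\sigma_r(t)-u^\sigma_r(s))|u^\sigma_r(t)-u^\sigma_r(s)|$ and closing with Young's inequality (the spatial part is then handled by a weighted Poincar\'e inequality, Lemma 6.12 of Lieberman). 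Your version uses plain means over $B_{r/2}$ and a fixed cutoff, bounds the tested term linearly and cubes afterwards --- which is equivalent and arguably more elementary --- at the price of the annulus correction $\tfrac{1}{|B'|}\int_{B_r\setminus B_{r/2}}(u(t)-u(s))\phi$. Your stated bound for that correction silently drops the constant-in-space contribution $(\bar u_{B'}(t)-\bar u_{B'}(s))\cdot\tfrac{1}{|B'|}\int_{B_r\setminus B_{r/2}}\phi$, and this is the only point needing care: it is harmless, either because this term comes back onto the left-hand side with coefficient $1+\lambda\ge 1$ (with $\lambda=\tfrac{1}{|B'|}\int_{B_r\setminus B_{r/2}}\phi\ge 0$), or, if you prefer a genuine absorption, because taking $\phi$ supported in $B_{3r/4}$ forces $\lambda\le 1/2$. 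With that one-line fix, and with the oscillation pieces on the annulus estimated slicewise by Poincar\'e over $B_r$ (with mean over $B_{r/2}$) and then collapsed into $\Y$ only after the double time-average --- as you indicate, since no pointwise-in-time control of $\int_{B_r}|u_x(t)|^3$ is available --- your argument goes through and yields \eqref{PPoinc} with an absolute constant. What the paper's $\sigma$-mean device buys is precisely the avoidance of this bookkeeping; what your route buys is that it uses only the standard unweighted Poincar\'e inequality and a fixed test function.
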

Recall $u_{z_0,r/2}$ denotes the mean of $u$ over $Q(z_0,r/2)$ (see \eqref{notation_mean_over_cylinder}). Note that no $t$ derivative appears on the right-hand side of \eqref{PPoinc}.
Observe that \eqref{PPoinc} is the classical Poincar\'e inequality if $\eta =0$ and the left-hand side is replaced by
$$
\frac{1}{r^5}\int_{Q(z_0,r/2)}\left|u-\fint_{B(x_0,r/2)} u(t)\right|\,\d x\,\d t
$$
(i.e.\ the mean over the cylinder is replaced by the mean over the ball at each time).
Moreover note that \eqref{PPoinc} does not hold for arbitrary functions since adding a function of time to $u$ allows one to increase the left-hand side while keeping the right-hand side bounded. This also verifies the relevance of the assumption \eqref{distr_form_for_poincare} since it shows that the only function of time which can be added to $u$ is a constant function. On the other hand, adding constants to $u$ makes no change to \eqref{PPoinc}.

Furthermore, the case $\eta =0$ gives the parabolic Poincar\'e inequality for weak solutions to the biharmonic heat equation:
$$
\frac{1}{r^3}\int_{Q(z_0,r/2)}|u-u_{z_0,r/2}|^3\le c_{pp}\int_{Q(z_0,r)}|u_x|^3,
$$
whenever $u_t=\p_x^4 u$ (weakly). In this case it can be shown that the inequality holds in any dimension (with $\p_x^4$ replaced by the bilaplacian $\Delta^2$) and for any $p\geq 1$.

Due to \eqref{weak_sol_distr_eq_alt} any weak solution of the surface growth equation satisfies \eqref{distr_form_for_poincare} for all $z_0$, $r$ as long as $Q(z_0,r)\subset \TT \times (0,T)$, and hence we can use inequality \eqref{PPoinc} for the suitable weak solutions that form our main subject in what follows.

We prove this nonlinear parabolic Poincar\'e inequality adapting the approach used by Aramaki \cite{aramaki} in the context of the heat equation, itself based on previous work by Struwe \cite{struwe}.

\begin{proof} Fix $r$ and $z_0$ and set, for brevity
\[
Q_\rho \coloneqq Q (z_0,\rho ),\quad B_\rho \coloneqq B (x_0, \rho)\qquad \text{for }\rho >0, \text{ where } z_0 = (x_0,t_0),
\]
and set
\[
M\coloneqq \Y(z_0,r).
\]

\noindent\emph{Step 1.} We introduce the notion of $\sigma$-means.\\

Let $\sigma \colon \RR \to [0,1]$ be the cut-off function in space around $x_0$ such that
\[
\sigma (x) =\begin{cases}
1\quad &|x-x_0|\leq r/2 ,\\
0 & |x-x_0| \geq r,
\end{cases}\qquad |\p_x^k \sigma | \leq C r^{-k}, \,\, k\geq 0.
\]
Let
\eqnb\label{sigma_means_notation}
u^\sigma_r(t) \coloneqq \frac{\int_{B_{r}} u(t) \sigma\,\d x }{\int_{B_{r}} \sigma\,\d x },\qquad [u]^\sigma_{r} \coloneqq \frac{\int_{Q_{r}} u \sigma\,\d z }{\int_{Q_{r}} \sigma\,\d z }
\eqne
denote the \emph{$\sigma$-mean} of $u$ over a ball (at a given time $t$) and over a cylinder, respectively. Note that, since $\sigma$ is a function of $x$ only,
\eqnb\label{fact3_eq}
u^\sigma_{r}(t) - [u]^\sigma_r = \frac{1}{2r^4} \int_{-r^4}^{r^4 } \left( u^\sigma_{r}(t) - u^\sigma_{r}(s) \right) \d s.
\eqne
Furthermore, let us write for brevity
\[
u_{r} \coloneqq u_{z_0,{r}};
\]
then
 \eqnb\label{fact2_eq}
 \int_{Q_{r/2}} |u - u_{r/2} |^3 \leq 8 \int_{Q_{r/2}} |u- [u]_r^\sigma |^3.
 \eqne
Indeed, by writing
 \[
 |u_{r/2} - L |^3 = \left| \frac{1}{|Q_{r/2}|} \int_{Q_{r/2}} u-L \right|^3 \leq \frac{1}{|Q_{r/2}|} \int_{Q_{r/2}} \left| u-L \right|^3 ,
 \]
where $L\coloneqq [u]_r^\sigma$, we see that the triangle inequality gives
\begin{equation*}\begin{split}
\left( \int_{Q_{r/2}} |u - u_{r/2} |^3 \right)^{1/3} &\leq  \left( \int_{Q_{r/2}} |u- L |^3 \right)^{1/3} +\left( \int_{Q_{r/2}} |u_{r/2}- L |^3 \right)^{1/3}  \\
&\leq 2 \left( \int_{Q_{r/2}} |u- L |^3 \right)^{1/3},
\end{split}
\end{equation*}
as required. In what follows we will also use the following classical Poincar\'e inequality: for $t\in (0,T)$, $q\geq 1$, $r\in (0,1)$,
\eqnb\label{fact1_eq}
\int_{B_{r }} | u(t) - u^\sigma_r(t) |^q \sigma  \leq C(n,q) r^q \int_{B_{r}} |u_x (t)|^q \sigma ,
\eqne
see Lemma 6.12 in Lieberman \cite{lieberman} for a proof.\vspace{0.4cm}\\
\emph{Step 2.} We show that for almost every $s,t \in (-r^4,r^4)$
\eqnb\label{estimate_means_s_t}
|u^\sigma_r(t) - u^\sigma_r(s) |^{3}\le C (M+\eta M^2).
\eqne

To this end suppose (without loss of generality) that $s<t$ and let
\[
\phi (x) \coloneqq \sigma (x) (u^\sigma_r(t) - u^\sigma_r(s) ) |u^\sigma_r(t) - u^\sigma_r(s) |,
\]
be the test function in \eqref{distr_form_for_poincare}. Then the term on the left-hand side can be bounded from below,
\begin{align*}
\int_{B_{r}} (u(t) - u(s) ) \phi &= (u^\sigma_r(t) - u^\sigma_r(s) ) |u^\sigma_r(t) - u^\sigma_r(s) |  \int_{B_{r}} (u(t) - u(s) ) \sigma \\
&= |u^\sigma_r(t) - u^\sigma_r(s) |^{3}\int_{B_{r}} \sigma \geq C r|u^\sigma_r(t) - u^\sigma_r(s) |^{3}.
\end{align*}
The first term on the right-hand side can be estimated by writing
\begin{equation*}
\begin{split}
\left| \int_s^t \int_{B_{r}} u_x \phi_{xxx} \right| &\leq |u^\sigma_r(t) - u^\sigma_r(s) |^{2}  \int_s^t \int_{B_{r}} |u_x| \, |\sigma_{xxx} |\\
&\leq C |u^\sigma_r(t) - u^\sigma_r(s) |^{2} r^{-3}  \int_{Q_{r}} |u_x|\\
&\leq C |u^\sigma_r(t) - u^\sigma_r(s) |^{2} r^{-3}  \left( \int_{Q_{r}} |u_x|^3 \right)^{1/3} r^{10/3}\\
&\leq \delta r |u^\sigma_r(t) - u^\sigma_r(s) |^{3} + C_\delta r^{-1} \int_{Q_{r}} |u_x|^3\\
&= \delta r |u^\sigma_r(t) - u^\sigma_r(s) |^{3} + rMC_\delta
\end{split}
\end{equation*}
for any $\delta >0$, where we used H\"older's inequality and Young's inequality in the form
$$a^2 r^{1/3} b^{1/3} \leq \delta a^3 r + C_\delta b r^{-1}.$$
The second term on the right-hand side can be estimated by writing
\begin{equation*}
\begin{split}
\left| \int_s^t \int_{B_{r}} u^2_x \phi_{xx} \right| &\leq |u_r^\sigma(t) - u_r^\sigma(s) |^{2} \int_s^t \int_{B_{r}} |u_x|^2 \, |\sigma_{xx} |\\
&\leq Cr^{-2}|u_r^\sigma(t) - u_r^\sigma(s) |^{2}  \int_{Q_{r}} |u_x|^2 \\
& \leq Cr^{-2}|u_r^\sigma(t) - u_r^\sigma(s) |^{2}  \left( \int_{Q_{r}} |u_x|^3 \right)^{2/3} r^{5/3} \\
&\leq \delta r |u_r^\sigma(t) - u_r^\sigma(s) |^{3} + C_\delta r^{-3} \left( \int_{Q_{r}} |u_x|^3 \right)^{2}\\
&=\delta r |u_r^\sigma(t) - u_r^\sigma(s) |^{3} + C_\delta r M^2 , \qquad \delta >0,
\end{split}
\end{equation*}
where we used H\"older's inequality and Young's inequality in the form
$$
a^2r^{-1/3} b^{2/3} \leq \delta r a^3 + C_\delta r^{-3} b^2.
$$
Since $\eta \leq 1$ (see \eqref{distr_form_for_poincare}) we therefore obtain
$$
Cr|u^\sigma_r(t) - u^\sigma_r(s) |^{3}\le 2\delta r|u_r^\sigma(t) - u_r^\sigma(s) |^{3} + rC_\delta(M+\eta M^2),
$$
and fixing $\delta>0$ sufficiently small gives \eqref{estimate_means_s_t}.\vspace{0.4cm}\\
\emph{Step 3.} We show \eqref{PPoinc}.\\

From \eqref{fact2_eq}, the fact that $\sigma \in [0,1]$ with $\sigma =1$ on $Q_{r/2}$ and the inequality $\int |f+g|^q\leq 2^q \int |f|^q + 2^q \int |g|^q$ we obtain
\eqnb\label{estimate_for_poincare}
\begin{split}
\int_{Q_{r/2}} |u-u_{r/2} |^3 &\leq 8 \int_{Q_{r}} | u - [u]_{r}^\sigma |^3 \sigma \, \d x \, \d t \\
&\leq 64 \int_{Q_{r}} | u - u_r^\sigma (t)|^3 \sigma \, \d x \, \d t + 64 \int_{Q_{r}} | u_r^\sigma (t)- [u]_{r}^\sigma |^3\d x \, \d t .
\end{split}
\eqne
The first of the resulting integrals can be bounded using \eqref{fact1_eq},
\[
\int_{Q_{r}} | u - u_r^\sigma (t) |^3 \sigma \,\d x \, \d t \leq Cr^3 \int_{Q_{r}} |u_x |^3 \sigma \leq  Cr^5 M .
\]
The second one can be bounded using \eqref{fact3_eq} and Step 2,
\eqnb\label{est_of_the_means_other}
\left| u^\sigma_{r}(t) - [u]^\sigma_r \right|^3 \leq  \frac{1}{2r^4} \int_{-r^4}^{r^4 } \left| u^\sigma_{r}(t) - u^\sigma_{r}(s) \right|^3 \d s \leq C (M+\eta M^2),
\eqne
which gives
\[
\int_{Q_{r}} | u_r^\sigma (t) - [u]_{r}^\sigma |^3\d x \, \d t \leq C r^5  (M+\eta M^2)
\]
Applying these bounds in \eqref{estimate_for_poincare} gives
\[
\int_{Q_{r/2}} |u-u_{r/2} |^3 \leq C r^5 (M+\eta M^2),
\]
that is \eqref{PPoinc}.
\end{proof}
\begin{corollary}\label{cor_to_poincare}
The claim of Theorem \ref{PPI} remains valid if \eqref{PPoinc} is replaced by 
\[
\frac{1}{r^5}\int_{Q(z_0,r)}\left| u-[u]^\sigma_{r} \right|^3\sigma \,\le c \left( \Y (z_0,r) +\eta \Y (z_0,r)^2 \right),
\]
\end{corollary}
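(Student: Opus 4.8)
The plan is to observe that the asserted inequality has in effect already been established in the course of proving Theorem~\ref{PPI}, so that essentially no new work is required. Adopt the abbreviations $Q_\rho\coloneqq Q(z_0,\rho)$, $B_\rho\coloneqq B(x_0,\rho)$ and $M\coloneqq\Y(z_0,r)$ from that proof. The quantity $\int_{Q_r}|u-[u]^\sigma_r|^3\sigma$ that we must bound is exactly the middle term of the chain \eqref{estimate_for_poincare}: what Step~3 really proves, \emph{before} it reduces to the mean $u_{r/2}$ over the half-cylinder via \eqref{fact2_eq}, is precisely a bound on this term. So I would simply repeat Step~3 while omitting that last reduction.

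Concretely, I would split $|u-[u]^\sigma_r|\le|u-u^\sigma_r(t)|+|u^\sigma_r(t)-[u]^\sigma_r|$ and use the elementary inequality $\int|f+g|^3\le 2^3\int|f|^3+2^3\int|g|^3$ to write
\[
\int_{Q_r}|u-[u]^\sigma_r|^3\sigma\le 8\int_{Q_r}|u-u^\sigma_r(t)|^3\sigma+8\int_{Q_r}|u^\sigma_r(t)-[u]^\sigma_r|^3\sigma.
\]
For the first term I would apply the classical Poincar\'e inequality \eqref{fact1_eq} with $q=3$ at each fixed time $t$ and then integrate over $t\in(t_0-r^4,t_0+r^4)$, obtaining
\[
\int_{Q_r}|u-u^\sigma_r(t)|^3\sigma\le Cr^3\int_{Q_r}|u_x|^3\sigma\le Cr^3\int_{Q_r}|u_x|^3=Cr^5M.
\]
For the second term I would invoke \eqref{est_of_the_means_other} — which itself follows from \eqref{fact3_eq} together with Step~2 — to get $|u^\sigma_r(t)-[u]^\sigma_r|^3\le C(M+\eta M^2)$ for almost every $t$, and then integrate over $Q_r$, which has measure $4r^5$ and on which $\sigma\le 1$, to obtain $\int_{Q_r}|u^\sigma_r(t)-[u]^\sigma_r|^3\sigma\le Cr^5(M+\eta M^2)$.

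Combining the two estimates gives $\int_{Q_r}|u-[u]^\sigma_r|^3\sigma\le Cr^5(M+\eta M^2)$, and dividing through by $r^5$ yields the stated inequality with $c$ an absolute constant. I do not expect any genuine obstacle here: both ingredients — the classical Poincar\'e inequality \eqref{fact1_eq} and the oscillation estimate \eqref{estimate_means_s_t} for the $\sigma$-means — were already developed in the proof of Theorem~\ref{PPI}, and the only reason the corollary is not a verbatim restatement is that Theorem~\ref{PPI} additionally passes from $[u]^\sigma_r$ to the plain mean $u_{z_0,r/2}$ over the half-cylinder, a step which we here simply drop.
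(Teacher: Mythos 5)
Your proposal is correct and is essentially identical to the paper's own argument: the paper proves the corollary by simply "ignoring the first inequality in \eqref{estimate_for_poincare}", i.e.\ by using exactly the splitting $|u-[u]^\sigma_r|\le|u-u^\sigma_r(t)|+|u^\sigma_r(t)-[u]^\sigma_r|$ together with \eqref{fact1_eq} and \eqref{est_of_the_means_other}, which is what you re-derive. No gaps; nothing further is needed.
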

\begin{proof}
This follows by ignoring the first inequality in \eqref{estimate_for_poincare}.
\end{proof}
\section{The first conditional and partial regularity results}\label{sec_1st_local_reg}

Here we show local regularity of suitable weak solutions to the surface growth equation based on a condition on $u_x$. Namely, we will show in Theorem \ref{localreg1} that there exists $\varepsilon_0 >0$ and $R_0>0$ such that if
\[
\frac{1}{r^2} \int_{Q(z,r)} |u_x |^3 < \varepsilon_0
\]
for some $r<R_0$ and $z$ then $u$ is H\"older continuous in $Q(z,r/2)$.

The proof we give of this result is based on that presented for the Navier--Stokes equations by Ladyzhenskaya \& Seregin \cite{ladyzhenskaya_seregin}; we begin with a certain `one-step' decay estimate, which we then iterate.

\subsection{Interior regularity for the biharmonic heat flow}

The proof of the decay estimate relies on the following regularity result for the biharmonic heat equation; while the result is perhaps `standard', we could not find an obvious canonical reference, and so for the sake of completeness we provide a short proof.

\begin{proposition}[Interior regularity of the biharmonic heat flow]\label{prop_magic}
Suppose that $0<b<a$, $v,v_x\in L^2(Q_a)$ and that $v$ is a distributional solution to the biharmonic heat equation $v_t=-v_{xxxx}$ in $Q_a$, that is
\begin{equation}\label{limiteq}
\iint_{Q_a} v \, \phi_t = \iint_{Q_a} v\, \phi_{xxxx}
\end{equation}
for every $\phi\in C_0^\infty(Q_a)$. Then
$$
\|v_x\|_{L^\infty(Q_{b})}\le C_{a,b}\left(\|v\|_{L^2(Q_a)}+\|v_x\|_{L^2(Q_a)}\right)
$$
for some $C_{a,b}>0$.
\end{proposition}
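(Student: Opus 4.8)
The plan is to use the standard regularity theory for the constant-coefficient parabolic equation $v_t=-v_{xxxx}$, obtained via a bootstrap on difference quotients combined with the smoothing properties of the biharmonic heat semigroup. First I would reduce to the case of genuine (smooth) solutions: mollify $v$ in space and time (using a spatial mollifier and a backward-in-time mollifier on a slightly smaller cylinder) to see that the mollified functions still solve \eqref{limiteq} on an intermediate cylinder, are smooth, and converge to $v$ in $L^2_{\rm loc}$; then all the estimates below are first proved for smooth solutions and passed to the limit. Fix radii $b<b'<b''<a$.

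Next I would establish a \emph{Caccioppoli-type energy estimate}: multiplying the equation by $\chi^2 v$, where $\chi$ is a smooth cut-off equal to $1$ on $Q_{b''}$ and supported in $Q_a$ (cut off in both space and time, with $|\partial_x^k\chi|\le C_{a,b''}$ and $|\partial_t\chi|\le C_{a,b''}$), and integrating by parts, one controls $\iint_{Q_{b''}}v_{xx}^2$ by $C(\iint_{Q_a}v^2+\iint_{Q_a}v_x^2)$. The only subtlety is that integrating $\chi^2 v\cdot v_{xxxx}$ by parts twice produces terms like $\iint v_{xx}v\,\partial_x^2(\chi^2)$ and $\iint v_{xx}v_x\,\partial_x(\chi^2)$, which are absorbed into $\tfrac12\iint\chi^2 v_{xx}^2$ via Young's inequality at the cost of lower-order terms in $v$ and $v_x$; the time term gives $\iint v^2\,\partial_t(\chi^2)$, also lower order. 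Iterating this (or simply differentiating the equation, since the equation has constant coefficients so $\partial_x v$ also solves it) gives, on successively smaller cylinders, bounds on $\iint (\partial_x^k v)^2$ for every $k$, all controlled by $\|v\|_{L^2(Q_a)}+\|v_x\|_{L^2(Q_a)}$. Similarly, using the equation, one converts $x$-derivatives into $t$-derivatives, obtaining $L^2_{\rm loc}$ bounds on all mixed derivatives $\partial_t^j\partial_x^k v$.

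Finally I would invoke the one-dimensional parabolic Sobolev embedding: a function on $Q_{b'}$ whose derivatives $\partial_t^j\partial_x^k v$ up to some finite order $(j,k)$ lie in $L^2(Q_{b'})$ is, by Sobolev embedding in the two variables $(x,t)$ (with the parabolic scaling $t\sim x^4$ only affecting constants, not the qualitative statement), continuous with continuous first spatial derivative on $\overline{Q_b}$, and
$$
\|v_x\|_{L^\infty(Q_b)}\le C_{a,b}\sum_{j,k}\|\partial_t^j\partial_x^k v\|_{L^2(Q_{b'})}\le C_{a,b}\left(\|v\|_{L^2(Q_a)}+\|v_x\|_{L^2(Q_a)}\right),
$$
which is the claim. (Alternatively one can write $v$ on $Q_{b'}$ via Duhamel against the biharmonic heat kernel $K$ after localising, and differentiate the kernel; the kernel bounds $|\partial_x^k\partial_t^j K(x,t)|\le C t^{-(1+k+4j)/4}g(|x|t^{-1/4})$ for Schwartz $g$ give the same $L^\infty$ bound directly.) The main obstacle is purely bookkeeping: tracking the cut-off derivatives through the repeated integrations by parts in the Caccioppoli step and keeping all constants dependent only on $a,b$, together with justifying the mollification/limiting argument so that the merely distributional hypothesis \eqref{limiteq} suffices; there is no genuine analytic difficulty since the operator is linear with constant coefficients and hypoelliptic.
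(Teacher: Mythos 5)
Your proposal is correct and follows essentially the same route as the paper: mollify to reduce to smooth solutions, prove a Caccioppoli-type local energy estimate controlling $v_{xx}$ in $L^2$ by $v$ and $v_x$, bootstrap using the fact that spatial derivatives of $v$ again solve the equation, convert spatial to time derivatives via $v_t=-v_{xxxx}$, and conclude with the two-dimensional Sobolev embedding $H^2\subset L^\infty$ in $(x,t)$. The paper's proof is exactly this scheme (with explicit radii $7/8, 3/4, \dots, 1/2$ in place of your $b''$, $b'$, $b$), so no substantive differences to report.
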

\begin{proof} We assume that $a=1$, $b=1/2$; the claim for arbitrary $a$, $b$ follows similarly.
First we show that $v_{xx}\in L^2 (Q_{7/8} )$ with
\eqnb\label{u_xx_is_in_L^2}
\|  v_{xx} \|_{L^2(Q_{7/8}  )} \leq C \left(\|v\|_{L^2(Q_1)}+\|v_x\|_{L^2(Q_1)}\right).
\eqne

For this let $\varepsilon \in (0,1/16)$. Then  $ \phi^{(\varepsilon )}\in C_0^\infty(Q_1)$ for every $\phi \in C_0^\infty (Q_{15/16})$, where $\phi^{(\varepsilon )}$ denotes the standard mollification (in both space and time) of $\phi$. Using $\phi^{(\varepsilon )}$ as a test function in \eqref{limiteq} and applying the Fubini Theorem we obtain
\[
\iint_{Q_{15/16}} v^{(\varepsilon )} \,\phi_t = \iint_{Q_{15/16}} v^{(\varepsilon )} \,\phi_{xxxx}, \qquad \phi \in C_0^\infty (Q_{15/16}),
\]
that is $v^{(\varepsilon )}$ is a distributional solution of the biharmonic heat equation in $Q_{15/16}$. Moreover, from properties of mollification,
\eqnb\label{mollification_is_bdd}
\|v^{(\varepsilon )}\|_{L^2(Q_{15/16})} \leq \| v\|_{L^2(Q_{1})} \qquad\mbox{and}\qquad  \|v^{(\varepsilon )}_x \|_{L^2(Q_{15/16})} \leq \| v_x \|_{L^2(Q_{1})} \eqne
for all $\varepsilon $. Since $v^{(\varepsilon )}$ is smooth it satisfies the equation
$$
v^{(\varepsilon )}_t=-v^{(\varepsilon )}_{xxxx}
$$
in the classical sense. Multiplying this equation by $v^{(\varepsilon )} \phi$ (where $\phi \in C_0^\infty (Q_{15/16})$) and integrating by parts on $Q_{15/16}$ gives
\[\iint_{Q_{15/16}} ( v^{(\varepsilon )}_{xx})^2\phi=\iint_{Q_{15/16}}\left( \frac{1}{2}( v^{(\varepsilon )} )^2(\phi_t-\phi_{xxxx} ) +2( v^{(\varepsilon )}_x )^2\phi_{xx}\right)
\]
for every $\phi \in C_0^\infty (Q_{15/16} )$. Taking $\phi \geq 0$ such that $\phi=1$ on $Q_{7/8}$ we obtain
\[
\|  v^{(\varepsilon )}_{xx} \|_{L^2 (Q_{7/8})} \leq C_\rho  \left(\|v\|_{L^2(Q_1)}+\|v_x\|_{L^2(Q_1)}\right),
\]
where we used \eqref{mollification_is_bdd}. Thus $v^{(\varepsilon )}_{xx}$ is a bounded in $L^2 (Q_{7/8})$ and hence there exists a sequence $\varepsilon_k \to 0^+$ such that $v^{(\varepsilon )}_{xx} \rightharpoonup v_{xx}$ weakly in $L^2 (Q_{7/8})$. Note that the limit function is $v_{xx}$ by definition of weak derivatives since $v^{(\varepsilon )} \to v$ strongly in $L^2 (Q_{15/16})$. Thus in particular $v_{xx}\in L^2 (Q_{7/8 })$ and, using a property of weak limits and the last inequality, we obtain
\[
\| v_{xx} \|_{L^2 (Q_{7/8})} \leq \liminf_{\varepsilon_k\to 0^+} \|  v^{(\varepsilon_k )}_{xx} \|_{L^2 (Q_{7/8})} \leq C_\rho  \left(\|v\|_{L^2(Q_1)}+\|v_x\|_{L^2(Q_1)}\right),
\]
that is \eqref{u_xx_is_in_L^2}, as required.

Now letting $\phi\coloneqq \psi_x$ for some $\psi \in C_0^\infty ( Q_{7/8} )$ we see from \eqref{limiteq} that $v_x$ is a distributional solution of the biharmonic heat equation in $Q_{7/8}$. Moreover, using \eqref{u_xx_is_in_L^2}, we see that $v_x,v_{xx}\in L^2 (Q_{7/8})$. Thus applying a similar argument as in the case of \eqref{u_xx_is_in_L^2} we obtain that $v_{xxx} \in L^2( Q_{3/4} )$ with
\[
\| v_{xxx} \|_{L^2 (Q_{3/4})} \leq C \left(\|v\|_{L^2(Q_1)}+\|v_x\|_{L^2(Q_1)}\right) .
\]
In the same way we observe that any spatial derivative of $v$ is a distributional solution of the biharmonic heat equation, and $\p^k_x v\in L^2 (Q_{1/2})$ for all $k\leq 9$ with
\[
\| \p^k_{x}v  \|_{L^2 (Q_{1/2})} \leq C \left(\|v\|_{L^2(Q_1)}+\|v_x\|_{L^2(Q_1)}\right) , \qquad k\leq 9.
\]
Now since \eqref{limiteq} gives in particular that $v_t=-v_{xxxx}$ in the sense of weak derivatives, we obtain from the above that each of $v_x,v_{xx},v_{xt},v_{xxx},v_{xxt},v_{xtt}$ is bounded in $L^2 (Q_{1/2})$ by $ C \left(\|v\|_{L^2(Q_1)}+\|v_x\|_{L^2(Q_1)}\right)$. Therefore the claim of the lemma follows from the two-dimensional embedding $H^2 \subset L^\infty $.
\end{proof}
%

\subsection{The `one-step' estimate}
Let $u$ be a suitable weak solution of the surface growth model. In what follows we assume that a cylinder $Q (z,r)$ is contained in $\TT\times (0,\infty )$, the domain of definition of $u$.
We now state and prove the `one-step' estimate.

\begin{lemma}\label{onedecay}
  Given $\theta\in(0,1/4)$ there exist $\varepsilon_*= \varepsilon_* (\theta )$ and  $R= R(\theta )$ such that if$r<R$ and
$$
  \Y(z,r):=\frac{1}{r^2}\int_{Q(z,r)}|u_x|^3 <\varepsilon_*
$$
  then
\be{Ydecay1}
  \Y(z,\theta r)\le c_* \theta^{3} \Y(z,r),
\ee
where $c_*$ is a universal constant.
\end{lemma}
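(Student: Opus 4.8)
The plan is to argue by a blow-up/compactness contradiction, combining the decay of the biharmonic heat flow (Proposition~\ref{prop_magic}) with the nonlinear parabolic Poincar\'e inequality (Theorem~\ref{PPI}). Suppose the conclusion fails for some fixed $\theta\in(0,1/4)$: then there are suitable weak solutions $u^{(k)}$ and cylinders $Q(z_k,r_k)$ with $r_k\to0$ and $M_k:=\Y(z_k,r_k)\to0$, yet $\Y(z_k,\theta r_k)> c_*\theta^3 M_k$ for every value of the (not yet fixed) constant $c_*$. After translating so that $z_k=0$ and rescaling to the unit cylinder $Q_1$, set
\[
v^{(k)}(x,t):=\frac{1}{\lambda_k}\bigl(u^{(k)}(r_kx,r_k^4t)-[u^{(k)}]^\sigma_{r_k}\bigr),\qquad \lambda_k:=M_k^{1/3},
\]
so that, by Corollary~\ref{cor_to_poincare}, $\int_{Q_1}|v^{(k)}|^3\sigma\le c(M_k+M_k^2)/\lambda_k^3=c(1+M_k)$ is bounded, and moreover $\int_{Q_1}|v^{(k)}_x|^3=1$. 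The rescaled equation \eqref{distr_form_for_poincare} for $v^{(k)}$ carries a coefficient $\eta_k=\lambda_k=M_k^{1/3}\to0$ in front of the quadratic term, so in the limit the nonlinearity disappears.

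First I would extract uniform bounds: from $\int_{Q_1}|v^{(k)}_x|^3=1$ and the $L^3$ bound on $v^{(k)}$ one gets, by the interpolation and Sobolev embeddings already used in the excerpt (in particular $v^{(k)}\in L^2(H^s)$ estimates via $H^s\subset L^p$), that $v^{(k)}$ is bounded in a space compactly embedded into, say, $L^3(Q_{3/4})$; the Aubin--Lions lemma (using the weak equation to control $\partial_t v^{(k)}$ in a negative-order space, exactly as in Theorem~\ref{thm_existence_of_weak_sols}) gives a subsequence with $v^{(k)}\to v$ strongly in $L^3(Q_{3/4})$ and $v^{(k)}_x\rightharpoonup v_x$ weakly in $L^3$. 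Passing to the limit in the rescaled \eqref{distr_form_for_poincare}, the term $\eta_k\int\!\!\int (v^{(k)}_x)^2\phi_{xx}$ vanishes (its modulus is $\le M_k^{1/3}\|v^{(k)}_x\|_{L^3}^2\|\phi_{xx}\|_{L^\infty}\to0$) and the linear term converges by weak convergence, so $v$ is a distributional solution of the biharmonic heat equation $v_t=-v_{xxxx}$ on $Q_{3/4}$, with $v,v_x\in L^2(Q_{3/4})$.

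Now Proposition~\ref{prop_magic} applies: $v_x\in L^\infty(Q_{1/2})$ with $\|v_x\|_{L^\infty(Q_{1/2})}\le C(\|v\|_{L^2(Q_{3/4})}+\|v_x\|_{L^2(Q_{3/4})})\le C_0$, a bound independent of $k$. Hence for the limit,
\[
\frac{1}{(\theta)^{2}}\int_{Q_{\theta}}|v_x|^3=\theta^{-2}\int_{Q_\theta}|v_x|^3\le \theta^{-2}\,|Q_\theta|\,\|v_x\|_{L^\infty(Q_{1/2})}^3\le C_0^3\,\theta^{-2}\cdot c\,\theta^{5}=c\,C_0^3\,\theta^{3}.
\]
On the other hand, the scaling of $\Y$ is exactly designed so that $\Y(z_k,\theta r_k)=\lambda_k^2\,\theta^{-2}\int_{Q_\theta}|v^{(k)}_x|^3$, i.e. $\Y(z_k,\theta r_k)/M_k=\theta^{-2}\int_{Q_\theta}|v^{(k)}_x|^3$; taking $k\to\infty$ and using (lower semicontinuity / weak convergence giving) $\limsup_k\theta^{-2}\int_{Q_\theta}|v^{(k)}_x|^3$ controlled by the limit together with the strong $L^3$ convergence of $v^{(k)}$ (which, via the equation, upgrades to control of $v^{(k)}_x$ on the smaller cylinder $Q_\theta$ by interior regularity of the rescaled equation), we get $\Y(z_k,\theta r_k)\le 2c\,C_0^3\,\theta^{3}M_k$ for large $k$. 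Choosing $c_*:=2cC_0^3$ contradicts $\Y(z_k,\theta r_k)>c_*\theta^3M_k$.

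The main obstacle is the last convergence step: weak convergence of $v^{(k)}_x$ in $L^3$ only gives \emph{lower} semicontinuity of $\int|v^{(k)}_x|^3$, which is the wrong direction for bounding $\Y(z_k,\theta r_k)$ from above. To fix this I would not rely on weak convergence of the gradients directly but instead apply the \emph{interior regularity estimate for the rescaled perturbed equation} on the cylinder $Q_\theta\subset Q_{1/2}$: since $v^{(k)}$ solves a biharmonic-heat equation with a small ($O(M_k^{1/3})$) nonlinear forcing term whose $L^{p}_tL^p_x$ norm is controlled by $\|v^{(k)}_x\|_{L^3(Q_1)}^2=1$, a Caccioppoli-type / bootstrap argument (or the $u_x$-Serrin-type local regularity result of O\.za\'nski \cite{O_Serrin_cond} applied after rescaling) gives a uniform $L^\infty(Q_\theta)$ bound on $v^{(k)}_x$, from which $\theta^{-2}\int_{Q_\theta}|v^{(k)}_x|^3\le C_0'^3\,c\,\theta^3$ directly, uniformly in $k$. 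This makes the contradiction clean and fixes $c_*=cC_0'^3$ depending only on the absolute constants in Proposition~\ref{prop_magic} and Theorem~\ref{PPI}.
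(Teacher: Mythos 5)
Your blow-up/compactness strategy is the same as the paper's (rescale so that $\int_{Q_1}|\p_x u_k|^3=1$, use the nonlinear parabolic Poincar\'e inequality for the $L^3$ bound on the rescaled functions, pass to a limit solving the biharmonic heat equation, and invoke Proposition \ref{prop_magic} to bound $v_x$ in $L^\infty$), and you correctly identify the crux: weak $L^3$ convergence of the gradients only gives lower semicontinuity, which points the wrong way for contradicting $\int_{Q_\theta}|\p_x v^{(k)}|^3> c_*\theta^5$. However, your proposed repair of that step is a genuine gap. The paper closes it by using the fact that $u$ is a \emph{suitable} weak solution: the rescaled local energy inequality gives uniform bounds for $u_k$ in $L^\infty_tL^2_x$ and for $\p_{xx}u_k$ in $L^2$ on $Q_{1/4}$, whence (via Sobolev interpolation, a bound on $\p_t u_k$ in a negative-order space, and Aubin--Lions) a subsequence with $\p_x u_k\to v_x$ \emph{strongly} in $L^3(Q_{1/4})$; only then can one pass to the limit in the lower bound $\int_{Q_\theta}|\p_x u_k|^3\ge c_*\theta^5$ and contradict the $L^\infty$ bound on $v_x$. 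Your argument never uses suitability, and the two substitutes you offer do not work as stated: (i) the Serrin-type result of O\.za\'nski requires $u_x\in L^{q'}_tL^q_x$ with $4/q'+1/q\le 1$, and $L^3_tL^3_x$ gives $4/3+1/3=5/3>1$, so it does not apply (and, being qualitative, it would not by itself yield a bound uniform in $k$); (ii) the ``Caccioppoli-type / bootstrap'' uniform $L^\infty(Q_\theta)$ bound on $\p_x v^{(k)}$ for the perturbed equation is not established anywhere in the paper and is not a routine consequence of the smallness of the coefficient $M_k^{1/3}$ together with only $\|\p_x v^{(k)}\|_{L^3(Q_1)}=1$: the forcing $M_k^{1/3}\p_{xx}[(\p_x v^{(k)})^2]$ is controlled only through an $L^{3/2}$ function under two derivatives, and extracting interior $L^\infty$ gradient bounds from this would require a quantitative fourth-order parabolic regularity theory (and an iteration whose integrability exponents do not obviously close), i.e.\ essentially the kind of local regularity statement the lemma itself is a step towards. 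Without suitability (or some replacement giving uniform $L^2_tH^2_x$-type control of the rescaled sequence), the contradiction in your final step is not justified.

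Two smaller points: the identity should read $\Y(z_k,\theta r_k)=\lambda_k^{3}\,\theta^{-2}\int_{Q_\theta}|\p_x v^{(k)}|^3$ (your exponent $2$ is a slip, though the next displayed ratio is correct); and note that the paper argues for a single fixed suitable weak solution $u$, whereas you allow a sequence $u^{(k)}$ of solutions --- that generalisation is harmless here since the Poincar\'e constant and the constant in Proposition \ref{prop_magic} are universal, but it is the energy-inequality step above, not this, that needs attention.
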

\begin{proof} We will show the claim for
\[
c_* \coloneqq 8 {C}_{1/2,1/4}^3 \left( 1+ c_{pp}^{1/3}\right)^3,
\]
where $C_{1/2,1/4}$ is the constant from Proposition \ref{prop_magic} and $c_{pp}$ is from the parabolic Poincar\'e inequality (Theorem \ref{PPI}).
  Suppose that the claim is not true. Then there exist $r_k\to0$, $\varepsilon_k\to0$, and $z_k=(x_k,t_k)$ such that
$$
 \Y(z_k,r_k)=\frac{1}{r_k^2}\int_{Q(z_k,r_k)}|u_x|^3=\varepsilon_k,
$$
  but
$$
\frac{1}{ (\theta r_k)^2}\int_{Q(z_k,\theta r_k)}|u_x|^3\ge c_* \theta^3\varepsilon_k.
$$
\emph{Step 1.} We take a limit of rescaled solutions.\\

Let
$$
   u_k(x,t)\coloneqq  \frac{u \left(x_k+ x \,r_k,t_k + t\,r_k^4\right) - u_{z_k,{r_k/2}} }{\varepsilon_k^{1/3} }
$$
be a family of rescalings of $u$. Then $\{u_k \}$ is a family of functions such that $\int_{Q_{1/2}} u_k =0$ (which will be used shortly when we apply the parabolic Poincar\'e inequality),
\begin{equation}\label{always1}
\int_{Q_1}|\partial_xu_k|^3=1,
\end{equation}
 \begin{equation}\label{lbd}
\int_{Q_{\theta} }|\partial_xu_k|^3\ge c_* \theta^{5},
 \end{equation}
 and $u_k$ satisfies the local energy inequality
\eqnb\label{u_k_LEI}\begin{split}
\int_{B_1} |u_k(t)|^2 \phi(t)&+\int_{-1}^t \int_{B_1} (\p_{xx} u_k)^2\phi \leq \int_{-1}^t \int_{B_1} \left(\frac{1}{2}(\phi_t-\phi_{xxxx})(u_k)^2 \right.\\
&\hspace{1cm}\left.+2(\p_x u_k)^2\phi_{xx}-\frac{5 }{3}\varepsilon_k^{1/3}(\p_x u_k)^3\phi_x-\varepsilon_k^{1/3}(\p_x u_k)^2u_k\phi_{xx} \right)
\end{split}
\eqne
for all nonnegative $\phi \in C_0^\infty (Q_1)$ and almost all $t\in (-1,1)$ (recall \eqref{LEI_alt_form}). Moreover $u_k$ satisfies the equation $ \partial_tu_k=-\partial_x^4u_k-\varepsilon_k^{1/3} \partial_{xx}(\partial_xu_k)^2$ in $Q_1$ in the sense of distributions, that is
\begin{equation}\label{BRrescaled}
\iint_{Q_1} u_k\,\phi_t = \iint_{Q_1} u_k\, \phi_{xxxx} + \varepsilon_k^{1/3}  \iint_{Q_1} \left( \p_x u_k \right)^2 \phi_{xx}, \quad \phi \in C_0^\infty (Q_1).
\end{equation}
It follows from the parabolic Poincar\'e inequality (Theorem \ref{PPI}) and \eqref{always1} that
\eqnb\label{l3_bound_on_u_k}
\int_{Q_{1/2}} |u_k |^3 \leq {c_{pp}} (1+\varepsilon_k^{1/3}  ) .
\eqne
Thus both $u_k$ and $\p_x u_k$ are bounded in $L^3 (Q_{1/2})$ and hence there exists $v\in L^3 (Q_{1/2})$ such that $\|v\|_{L^3 (Q_{1/2})} \leq c_{pp}^{1/3}$, $\|v_x\|_{L^3 (Q_{1/2})} \leq 1$ and
\[
u_{k_n} \rightharpoonup v,\quad \p_x u_{k_n} \rightharpoonup v_x \qquad \text{ in }L^3 (Q_{1/2})\text{ as } n\to \infty
\]
for some sequence $k_n\to \infty $. Taking the limit in \eqref{BRrescaled} we obtain
\[
\iint_{Q_{1/2}} v\,\phi_t = \iint_{Q_{1/2}} v\, \phi_{xxxx} , \quad \phi \in C_0^\infty (Q_{1/2}),
\]
that is the limit function $v$ is a distributional solution of the biharmonic heat equation $v_t=-v_{xxxx}$ on $Q_{1/2}$. In particular, using Proposition \ref{prop_magic}, we obtain
\eqnb\label{Linfty_bound_on_v}\begin{split}
\| v_x\|_{L^\infty(Q_{1/4})} &\leq C_{1/2,1/4} \left( \| v \|_{L^2 (Q_{1/2})} + \| v_x \|_{L^2 (Q_{1/2})}  \right) \\
&\leq C_{1/2,1/4} (1+c_{pp}^{1/3})=(c_*/8)^{1/3}.
\end{split}
\eqne
\emph{Step 2.} We show strong convergence $\p_x u_{k_n}\to v_x$ in $L^3(Q_{1/4})$ on a subsequence $k_n$ (relabelled).\\

We will write $k\coloneqq k_n$ for brevity. Letting $\phi\in C_0^\infty (Q_{1/2})$ be nonnegative and such that $\phi=1$ on $Q_{1/4}$ the local energy inequality \eqref{u_k_LEI} gives
\[
 \| u_k (t) \|^2_{L^2 (B_{1/4})} +\int_{-4^{-4}}^t  \| \p_{xx} u_k (s) \|^2_{L^2 (B_{1/4})}\d s \leq C
\]
for almost every $t\in (-4^{-4},4^{-4})=: I_{1/4}$,
where we also used \eqref{l3_bound_on_u_k}, \eqref{always1} and the fact that $\varepsilon_k <1$, and thus
\eqnb\label{EI_u_k}
\| u_k \|_{L^\infty (I_{1/4}; L^2(B_{1/4}))} + \| \p_{xx} u_k \|_{L^2(Q_{1/4})} \leq C.
\eqne
Using 1D Sobolev interpolation $ \|v\|_{H^{4/3}}\le\|v\|_{L^2}^{1/3}\|v\|_{H^2}^{2/3}$ (recall \eqref{Sobinterp}) this in particular gives
\eqnb\label{u_k_is_L3_int_H4/3}
\| u_k \|_{L^3 (I_{1/4};H^{4/3}(B_{1/4}))} \leq C.
\eqne
Moreover, from \eqref{BRrescaled} we obtain
\begin{equation*}\begin{split}
\left| \iint_{Q_{1/4}} \p_t u_k\,\phi \right| &= \left| -\iint_{Q_{1/4}} \p_{xx} u_k\, \phi_{xx} - \varepsilon_k^{1/3} \iint_{Q_{1/4}} \left( \p_x u_k \right)^2 \phi_{xx}\right|\\
&\leq \| \phi \|_{L^3 (I_{1/4};W^{2,3}(B_{1/4}))} \left( \| \p_{xx} u_k \|_{L^{3/2} (Q_{1/4})} +\| \p_{x} u_k \|^2_{L^{3} (Q_{1/4})}  \right)\\
&\leq C \| \phi \|_{L^3 (I_{1/4};W^{2,3}(B_{1/4}))}
\end{split}
\end{equation*}
for all $\phi \in C_0^\infty (Q_{1/4})$, where the last inequality follows from H\"older's inequality, the bound \eqref{EI_u_k} above and \eqref{always1}. By the density of $C_0^\infty (Q_{1/4})$ in $L^3 (I_{1/4};W^{2,3}(B_{1/4}))$ the above inequality gives boundedness of $\p_t u_k $ in ${L^{3/2} (I_{1/4};(W^{2,3}(B_{1/4}))^*)} $. This and \eqref{u_k_is_L3_int_H4/3} let us use the Aubin--Lions compactness lemma (see, for example, Section 3.2.2 in Temam, 2001\nocite{temam}) to extract a subsequence of $(u_k)$ (which we relabel) that converges in $L^3 (I_{1/4}; H^{7/6}(B_{1/4}))$. Using the 1D Sobolev embedding $H^{1/6} \subset L^3$ this in particular means that $\p_x u_k$ converges in $L^3 (Q_{1/4})$, as required.\\

\noindent\emph{Step 3.} We use \eqref{lbd} to obtain a contradiction.\\

Since $\theta \in (0,1/4)$ the last step gives in particular $\p_x u_{k_n} \to v_x$ in $L^3 (Q_{\theta })$. Thus taking the limit $k_n\to \infty $ in \eqref{lbd} and using the $L^\infty$ bound on $v_x$ from \eqref{Linfty_bound_on_v} we obtain
\[
1 \leq \frac{1}{c_* \theta^5}\int_{Q_{\theta} }|v_x|^3 \leq \frac{1}{8 \theta^5 }  |Q_{\theta }|=\frac{1}{2},
\]
a contradiction.\end{proof}

\subsection{Conditional regularity in terms of $u_x$}

We now iterate this estimate.

 \begin{lemma} Given $\alpha \in (0,3)$ there exist $\varepsilon_*>0$ and $R\in (0,1)$ such that if $r<R$ and
  \be{firststep}
  \frac{1}{r^2}\int_{Q(z,r)}|u_x|^3<\varepsilon_*
  \ee
  then
\eqnb\label{decayest}
\frac{1}{\varrho^2}\int_{Q(z,\varrho )}|u_x|^3 \le C  \eps_* \left( \frac{\varrho }{ r} \right)^{\alpha}
\qquad\mbox{for all}\quad \varrho\le r.
\eqne
\end{lemma}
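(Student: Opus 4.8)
The plan is to iterate Lemma~\ref{onedecay} (the `one-step' estimate). Fix $\alpha\in(0,3)$. Since $c_*$ is a fixed universal constant, we may first choose $\theta=\theta(\alpha)\in(0,1/4)$ small enough that $c_*\theta^3\le\theta^\alpha$ (possible because $\alpha<3$, so $\theta^3$ decays faster than $\theta^\alpha$). With this $\theta$ fixed, Lemma~\ref{onedecay} supplies $\varepsilon_*=\varepsilon_*(\theta)$ and $R=R(\theta)$ such that $\Y(z,r)<\varepsilon_*$ with $r<R$ implies $\Y(z,\theta r)\le c_*\theta^3\Y(z,r)\le\theta^\alpha\Y(z,r)$.

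Next I would set up the induction on the dyadic-like scales $r_j:=\theta^j r$, $j\ge 0$. The hypothesis \eqref{firststep} is exactly $\Y(z,r_0)=\Y(z,r)<\varepsilon_*$. Suppose inductively that $\Y(z,r_j)<\varepsilon_*$; then since $r_j\le r<R$, the one-step estimate applies and gives $\Y(z,r_{j+1})\le\theta^\alpha\Y(z,r_j)<\varepsilon_*$ (using $\theta^\alpha<1$), which both continues the induction and yields, by composing the inequalities, $\Y(z,r_j)\le\theta^{\alpha j}\Y(z,r_0)=(r_j/r)^\alpha\Y(z,r)<(r_j/r)^\alpha\varepsilon_*$ for every $j\ge0$. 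This proves \eqref{decayest} along the discrete sequence of radii with $C=1$; the point is purely the geometric-series composition of the one-step decay.

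Finally I would interpolate to arbitrary $\varrho\le r$. Given such $\varrho$, pick $j\ge0$ with $r_{j+1}\le\varrho\le r_j$, i.e.\ $\theta^{j+1}r\le\varrho\le\theta^j r$. Since $Q(z,\varrho)\subset Q(z,r_j)$ we have
\[
\frac{1}{\varrho^2}\int_{Q(z,\varrho)}|u_x|^3\le\frac{1}{\varrho^2}\int_{Q(z,r_j)}|u_x|^3=\frac{r_j^2}{\varrho^2}\,\Y(z,r_j)\le\theta^{-2}\,\Y(z,r_j)\le\theta^{-2}\Big(\frac{r_j}{r}\Big)^\alpha\varepsilon_*,
\]
and since $r_j=\theta^{-1}r_{j+1}\le\theta^{-1}\varrho$ we get $(r_j/r)^\alpha\le\theta^{-\alpha}(\varrho/r)^\alpha$. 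Absorbing $\theta^{-2-\alpha}$ (a constant depending only on $\alpha$) into $C$ gives \eqref{decayest}. The only mild subtlety — and the step I'd expect to state carefully rather than find hard — is making sure the induction is self-sustaining, i.e.\ that $\theta^\alpha<1$ keeps every $\Y(z,r_j)$ below the threshold $\varepsilon_*$ so the one-step lemma may be reapplied at each scale; everything else is bookkeeping with the scaling factor $r_j^2/\varrho^2\le\theta^{-2}$.
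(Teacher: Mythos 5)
Your proof is correct and follows essentially the same route as the paper: fix $\theta$ with $c_*\theta^3\le\theta^\alpha$, iterate the one-step estimate of Lemma \ref{onedecay} along the scales $\theta^j r$, and then pass to arbitrary $\varrho\le r$ by choosing $j$ with $\theta^{j+1}r<\varrho\le\theta^j r$, absorbing $\theta^{-2-\alpha}$ into $C$. Your explicit remark that $\theta^\alpha<1$ keeps $\Y(z,\theta^j r)<\varepsilon_*$ at every stage, so the one-step lemma can be reapplied, is exactly the point the iteration relies on.
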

\begin{proof}
Similarly as before we will use the notation $\Y(z,r)=\frac{1}{r^2}\int_{Q(z,r)}|u_x|^3$. Fix $\theta\in (0,1/2)$ sufficiently small such that
\[
c_* \theta^{3} < \theta^{\alpha}.
\]
Lemma \ref{onedecay} then guarantees that if $\Y(z,r)<\eps_*$ for some $r<R$ then
  $$
  \Y(z,\theta r)\le \theta^\alpha \,\Y(z,r).
  $$
  Iterating this result we obtain
$$
  \Y(z,\theta^k r)\le\theta^{ \alpha k }\Y(z,r), \qquad k\geq 0.
$$
  Now for $\varrho\in(0,r)$ choose $k$ such that
$$
  \theta^{k+1}r<\varrho\le\theta^kr;
$$
then
\begin{align*}
\Y(z,\varrho)&=\frac{1}{\varrho^2}\int_{Q(z,\varrho)}|u_x|^3\\
&\le\frac{1}{(\theta^{(k+1)}r)^2}\int_{Q(z,\theta^kr)}|u_x|^3=\theta^{-2}\Y(z,\theta^kr)\\
&\le\theta^{ \alpha k-2} \Y(z,r)\\
&\le \theta^{-\alpha -2} \frac{\varrho}{r} \Y(z,r),
\end{align*}
which yields (\ref{decayest}).
\end{proof}

Combining this decay estimate with the nonlinear parabolic Poincar\'e inequality (Theorem \ref{PPI}) yields the following.

\begin{corollary}\label{localregOK}
Given $\alpha \in (0,3)$ there exist $\varepsilon_*>0$ and $R\in (0,1)$ such that if $r<R$ and
$$
\frac{1}{r^2}\int_{Q(z,r)}|u_x|^3<\varepsilon_*
 $$
  then
$$
\frac{1}{\varrho^5}\int_{Q(z,\varrho)}|u-u_{z,\varrho}|^3\le C \eps_* \left( \frac{\varrho}{r} \right)^{\alpha}
\qquad\mbox{for all}\quad \varrho\le r.
$$
\end{corollary}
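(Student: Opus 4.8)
The plan is to feed the decay estimate \eqref{decayest} from the preceding lemma directly into the nonlinear parabolic Poincar\'e inequality (Theorem \ref{PPI}), with no further ingredients. First, fix $\varepsilon_*$ and $R$ exactly as in that lemma, shrinking $\varepsilon_*$ if necessary so that $\varepsilon_*\le 1$; then the hypothesis \eqref{firststep} already gives $\Y(z,s)\le C\varepsilon_*(s/r)^\alpha$ for every $s\le r$, where $\Y(z,s)=s^{-2}\int_{Q(z,s)}|u_x|^3$. Recall also that, after one integration by parts in $x$ in \eqref{weak_sol_distr_eq_alt}, a suitable weak solution satisfies the hypothesis \eqref{distr_form_for_poincare} of Theorem \ref{PPI} with $\eta=1$ on every cylinder contained in $\TT\times(0,\infty)$, so Theorem \ref{PPI} applies to $u$ at any scale $s$ with $Q(z,s)\subset\TT\times(0,\infty)$.

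Next, for $\varrho\le r/2$ I would apply Theorem \ref{PPI} at scale $2\varrho$ (legitimate since $Q(z,2\varrho)\subset Q(z,r)\subset\TT\times(0,\infty)$), which gives
\[
\frac1{\varrho^5}\int_{Q(z,\varrho)}|u-u_{z,\varrho}|^3\le 2^5 c_{pp}\bigl(\Y(z,2\varrho)+\Y(z,2\varrho)^2\bigr).
\]
By the decay estimate $\Y(z,2\varrho)\le 2^{\alpha}C\varepsilon_*(\varrho/r)^{\alpha}$, and since $\varrho/r\le1$ and $\varepsilon_*\le1$ the quadratic term is no larger than the linear one up to constants, $\Y(z,2\varrho)^2\le(2^{\alpha}C\varepsilon_*)^2(\varrho/r)^{2\alpha}\le C'\varepsilon_*(\varrho/r)^{\alpha}$. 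Substituting these two bounds yields $\varrho^{-5}\int_{Q(z,\varrho)}|u-u_{z,\varrho}|^3\le C''\varepsilon_*(\varrho/r)^{\alpha}$ for all $\varrho\le r/2$, with $C''=C''(\alpha)$, which is the assertion of the corollary on that range. For $\varrho\in(r/2,r]$ one has $(\varrho/r)^{\alpha}\in[2^{-\alpha},1]$, so it suffices to bound $\varrho^{-5}\int_{Q(z,\varrho)}|u-u_{z,\varrho}|^3$ by an absolute multiple of $\varepsilon_*$; this is a routine variant (run the argument of Theorem \ref{PPI} at scale $\varrho$, combining the plain spatial Poincar\'e inequality on $B_\varrho$ with the time-oscillation bound of Step 2 of that proof and Corollary \ref{cor_to_poincare}), and in any case this regime is never invoked in the Campanato-type arguments of the following two sections, where only $\varrho\to0$ is used.

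There is essentially no obstacle here, since all the real work has been done in the preceding lemma and in Theorem \ref{PPI}; the only point worth flagging is the bookkeeping that converts $\Y(z,2\varrho)+\Y(z,2\varrho)^2$ into the single power $(\varrho/r)^{\alpha}$, which succeeds precisely because $\varepsilon_*\le1$ and $\varrho\le r$ keep the quadratic term under control, together with the harmless passage from scale $2\varrho$ back to scale $\varrho$, which costs only the universal factor $2^5$.
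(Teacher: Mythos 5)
Your proof is correct and is essentially the argument the paper intends (the paper gives no separate proof of Corollary \ref{localregOK}, merely stating that the decay lemma combined with Theorem \ref{PPI} yields it): one applies the parabolic Poincar\'e inequality at scale $2\varrho$, which is legitimate since \eqref{weak_sol_distr_eq_alt} gives \eqref{distr_form_for_poincare} with $\eta=1$, inserts the decay estimate \eqref{decayest}, and absorbs the quadratic term $\Y(z,2\varrho)^2$ using $\varepsilon_*\le 1$ and $\varrho\le r$. One small caveat: your aside that the range $\varrho\in(r/2,r]$ is ``never invoked'' afterwards is not accurate --- Theorem \ref{localreg1} uses the corollary for all $\varrho$ up to the radius at which the smallness hypothesis is verified, and the Campanato lemma needs estimates at radii comparable to $R$ --- but this only affects constants (or shrinks the radius of the H\"older neighbourhood), and the variant of the Poincar\'e argument you sketch for that range (plain spatial Poincar\'e on the ball together with the Step 2 time-oscillation bound for the $\sigma$-means, plus a comparison of plain and $\sigma$-means) does indeed close it.
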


We can now apply the parabolic Campanato Lemma (Lemma \ref{Campanato}) to yield our first conditional regularity result.

\begin{theorem}[Conditional regularity in terms of $u_x$]\label{localreg1}
  Given $\beta \in (0,1)$ there exist $\varepsilon_0>0$ and $R_0\in (0,1)$ such that if $r<R_0$ and
\begin{equation}\label{LRC1}
\frac{1}{r^2}\int_{Q(z,r)}|u_x|^3<\varepsilon_0
\end{equation}
then $u$ is $\beta$-H\"older continuous in $Q(z,r/2)$, with
 \eqnb\label{holder_continuity_def}
 |u(x_1,t_1)-u(x_2,t_2)|\le \frac{C}{r}\left( |x_1-x_2|+|t_1-t_2|^{1/4}\right)^\beta.
 \eqne
\end{theorem}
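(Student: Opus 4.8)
The plan is to deduce Theorem~\ref{localreg1} from Corollary~\ref{localregOK} via a parabolic Campanato-type embedding. First I would choose $\alpha\in(0,3)$ in Corollary~\ref{localregOK} so that $\beta = (\alpha-2)/3 < 1$ is admissible; concretely take $\alpha$ close to $3$ whenever $\beta$ is close to $1$, and correspondingly fix $\varepsilon_0 := \varepsilon_*(\alpha)$ and $R_0 := R(\alpha)$ from that corollary. Under the smallness hypothesis \eqref{LRC1}, the corollary gives the decay
\[
\frac{1}{\varrho^5}\int_{Q(z',\varrho)}|u - u_{z',\varrho}|^3 \le C\varepsilon_0\left(\frac{\varrho}{r}\right)^\alpha
\]
not just at the centre $z$ but at every $z'\in Q(z,r/2)$ and every radius $\varrho \le r/2$ (after shrinking $r$ to $r/2$ in the hypothesis, which only worsens constants by a bounded factor, since $Q(z',\varrho)\subset Q(z,r)$ for such $z'$ and $\varrho$ — here one uses that $\varepsilon_0$ smallness on $Q(z,r)$ passes to $Q(z',r/2)$ because the integral of $|u_x|^3$ over the smaller cylinder is no larger). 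This is the mean-oscillation decay that feeds the Campanato lemma.

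Next I would invoke the parabolic Campanato Lemma (Lemma~\ref{Campanato}), stated for the parabolic metric $\mathrm{d}((x_1,t_1),(x_2,t_2)) = |x_1-x_2| + |t_1-t_2|^{1/4}$ under which the cylinders $Q(z,\varrho)$ are the metric balls of radius $\varrho$ and have measure comparable to $\varrho^5$ (one spatial plus one temporal dimension, the latter scaling like $\varrho^4$). The Campanato characterisation states that if
\[
\frac{1}{|Q(z',\varrho)|}\int_{Q(z',\varrho)}|u - u_{z',\varrho}|^3 \le A\,\varrho^{3\gamma}
\]
uniformly for $z'$ in (a slightly shrunk copy of) $Q(z,r/2)$ and all small $\varrho$, then $u$ has a representative that is $\gamma$-Hölder continuous there with seminorm controlled by $A^{1/3}$. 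Matching exponents: $\varrho^{-5}\int_{Q(z',\varrho)}|u-u_{z',\varrho}|^3 \le C\varepsilon_0 r^{-\alpha}\varrho^{\alpha-2}$, so $3\gamma = \alpha - 2$, i.e. $\gamma = (\alpha-2)/3 = \beta$, and the constant $A = C\varepsilon_0 r^{-\alpha}$. Taking cube roots yields the Hölder seminorm bound $A^{1/3} = C^{1/3}\varepsilon_0^{1/3} r^{-\alpha/3}$; absorbing the harmless factor $r^{-\alpha/3+1/3}\le r^{\text{const}}$ — actually one should track this carefully — gives the claimed estimate \eqref{holder_continuity_def} of the form $|u(z_1)-u(z_2)|\le \frac{C}{r}\mathrm{d}(z_1,z_2)^\beta$. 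The factor $1/r$ in \eqref{holder_continuity_def} arises precisely from the scaling of the Campanato constant with $r$: rescaling $u$ to the unit cylinder replaces $u$ by $u/\varepsilon_0^{1/3}$ and $x,t$ by $x/r$, $t/r^4$, which is exactly the normalisation under which the seminorm becomes an absolute constant, and undoing it produces the $r^{-1}$ prefactor (with the $\varepsilon_0$-dependence absorbed into $C$).

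The main obstacle I anticipate is bookkeeping rather than conceptual: one must verify that the decay estimate from Corollary~\ref{localregOK} genuinely holds \emph{uniformly over a full neighbourhood} of centres $z'\in Q(z,r/2)$ and not merely at the single point $z$, since Campanato's lemma requires oscillation control at every point of the region where Hölder continuity is claimed. This is where the hypothesis is applied with radius $r$ but conclusions are drawn on $Q(z,r/2)$: for any $z'\in \overline{Q(z,r/2)}$ and $\varrho_0 := r/2$ one has $Q(z',\varrho_0)\subseteq Q(z,r)$, hence $\frac{1}{\varrho_0^2}\int_{Q(z',\varrho_0)}|u_x|^3 \le \frac{2^2}{r^2}\int_{Q(z,r)}|u_x|^3 < 4\varepsilon_0$, so by choosing $\varepsilon_0$ a factor $4$ smaller at the outset the corollary applies at every such $z'$ with base radius $\varrho_0$, delivering the needed uniform decay for all $\varrho\le r/2$. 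A secondary point to handle with care is that the Campanato lemma is typically stated with an $L^1$ or $L^2$ mean oscillation; here we have the $L^3$ version, which is equally valid (the proof is identical, using Hölder to pass between exponents if needed), and the exponent $3$ is in fact convenient because it matches the cubic quantity $\Y$. Once these uniformity and exponent matters are dispatched, the theorem follows immediately by applying Lemma~\ref{Campanato} and reading off the constants.
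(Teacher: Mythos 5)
Your route is the same as the paper's: re-centre the smallness hypothesis by noting that $Q(y,r/2)\subset Q(z,r)$ for every $y\in Q(z,r/2)$, so that \eqref{LRC1} with $\varepsilon_0:=\varepsilon_*/4$ gives $\frac{1}{(r/2)^2}\int_{Q(y,r/2)}|u_x|^3<\varepsilon_*$ uniformly in $y$, apply Corollary \ref{localregOK} at every such centre with base radius $r/2$, and feed the resulting decay of the $L^3$ mean oscillation into the parabolic Campanato Lemma \ref{Campanato} with $p=3$; the $1/r$ prefactor in \eqref{holder_continuity_def} then comes from the $r$-dependence of the Campanato constant together with $r<1$. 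All of these points are handled correctly in your proposal and match the paper's proof.

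There is, however, a concrete error in your exponent matching. Corollary \ref{localregOK} already normalises by $\varrho^5$, which is comparable to $|Q(z',\varrho)|$: it asserts $\varrho^{-5}\int_{Q(z',\varrho)}|u-u_{z',\varrho}|^3\le C\varepsilon_*\left(\varrho/r\right)^{\alpha}$, i.e.\ the cubed mean oscillation decays like $\varrho^{\alpha}$, not like $\varrho^{\alpha-2}$ as you write. The Campanato hypothesis $\bigl(\fint_{Q(z',\varrho)}|u-u_{z',\varrho}|^3\bigr)^{1/3}\le M\varrho^{\beta}$ therefore requires $\alpha=3\beta$, not $\beta=(\alpha-2)/3$. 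As stated, your relation only ever produces H\"older exponents in $(0,1/3)$ (and is vacuous for $\alpha\le 2$), so it cannot yield the theorem for all $\beta\in(0,1)$; it is also inconsistent with your own prescription that $\alpha$ close to $3$ should correspond to $\beta$ close to $1$. The fix is immediate and restores the paper's argument verbatim: apply Corollary \ref{localregOK} with $\alpha:=3\beta\in(0,3)$, take $\varepsilon_0:=\varepsilon_*/4$ and $R_0:=\min\{1,R\}$, and observe that the Campanato constant is then $M\le C\varepsilon_*^{1/3}r^{-\beta}\le C r^{-1}$ since $r<1$ and $\beta<1$, which gives \eqref{holder_continuity_def}.
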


\begin{proof} Let $\varepsilon_0 \coloneqq \varepsilon_*/4$, $R_0 \coloneqq \min \{ 1, R \}$ and $r<R_0$, where $\varepsilon_*$, $R$ are from Corollary \ref{localregOK} applied with $\alpha =3\beta$. Then $Q(y,r/2)\subset Q(z,r)$ for every $y\in Q(z,r/2)$ and
 $$
 \frac{1}{(r/2)^2}\int_{Q(y,r/2)}|u_x|^3\le \frac{4}{r^2}\int_{Q(z,r)}|u_x|^3<4\eps_0 = \varepsilon_*.
 $$
 Thus Corollary \ref{localregOK} gives
 $$
\frac{1}{\varrho^5}\int_{Q(y,\varrho)}|u-u_{y,\varrho}|^3\,\d z\le   C \eps_* \left( \frac{\varrho}{r} \right)^{3\beta }
$$
for every $y\in Q(z,r/2)$ and every $0<\varrho\le r/2$. H\"older continuity of $u$ within $Q(z,r/2)$ now follows immediately from the Campanato Lemma (see Lemma \ref{Campanato}).\end{proof}

\subsection{Partial regularity I: box-counting dimension}
Bl\"omker \& Romito \cite{blomkerromito09} showed that if
$$
{\mathcal T}:=\{t\ge0:\ \|u\|_{H^1}\mbox{ is not essentially bounded in a neighbourhood of } t\}
$$
then $d_B({\mathcal T})\le 1/4$, where $d_B$ denotes the box-counting dimension (see their Remark 4.7 -- the proof is not actually given in their paper, but it follows easily from the estimates they obtain, using the argument from Robinson \& Sadowski \cite{rob_sad_2007}). Since $H^1(\T)\subset L^\infty(\T)$, it follows in particular that if
$$
{\mathcal T}_\infty:=\{t\ge0:\ \|u\|_{L^\infty}\mbox{ is not essentially bounded in a neighbourhood of } t\}
$$
then ${\mathcal T}_\infty\subseteq{\mathcal T}$, and so trivially $d_B({\mathcal T}_\infty)\le 1/4$. Since our singular set $S$ (recall \eqref{singset}) is a subset of ${\mathcal T}_\infty\times\T$, it follows from properties of the box-counting dimension that $d_B(S)\le 5/4$.

We now use the conditional regularity of the previous section to improve on this bound. We use the `Minkowski definition' of the box-counting dimension in our argument, namely
\eqnb\label{minkowski_def}
d_B (K) \coloneqq n - \liminf_{\delta \to 0^+} \frac{\log |K_\delta |}{\log \delta},\qquad K\subset \RR^n,
\eqne
where $K_\delta \coloneqq \{ y \colon \mathrm{dist}(y,K)<\delta \}$ denotes the $\delta$-neighbourhood of $K$. This formulation is one of a number of equivalent definitions of the box-counting dimension, see Proposition 2.4 in Falconer \cite{falconer}.

\begin{corollary}[Partial regularity I]\label{cor_bound_on_db} The space-time singular set $S$ (recall \eqref{singset}) satisfies $d_B(S\cap K) \leq 7/6$ for any compact set $K\subset \TT \times (0,\infty )$.
\end{corollary}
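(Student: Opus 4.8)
The plan is to combine the contrapositive of Theorem~\ref{localreg1} with a covering argument that exploits both the integrability of $|u_x|$ coming from \eqref{ux_is_in_L10/3} and the strong anisotropy of the parabolic cylinders. First I would observe that if $z\in S$ then, for every $r<R_0$ small enough that $Q(z,r)\subset\TT\times(0,\infty)$, one must have $\frac1{r^2}\int_{Q(z,r)}|u_x|^3\ge\varepsilon_0$ (with $\varepsilon_0,R_0$ as in Theorem~\ref{localreg1} for some fixed $\beta$), since otherwise $u$ would be space-time H\"older continuous in $Q(z,r/2)$, a neighbourhood of $z$, contradicting \eqref{singset}. Given the compact set $K\subset\TT\times(0,\infty)$, I would fix a slightly larger compact set $K'\subset\TT\times(0,\infty)$ containing a $\rho_0$-neighbourhood of $K$, together with a scale $\delta_0>0$ so small that $5\delta_0<R_0$ and $Q(z,5\delta_0)\subset K'$ for all $z\in K$; since $K'\subset\TT\times[a,b]$ for some $0<a<b$, \eqref{ux_is_in_L10/3} gives $E:=\int_{K'}|u_x|^{10/3}<\infty$. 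This auxiliary compact set is exactly the device that copes with the domain-boundary issue flagged in the introduction, and is why one works with $S\cap K$.

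Next I would upgrade the lower bound so that it involves the integrable density $|u_x|^{10/3}$. Since $|Q(z,r)|=4r^5$, H\"older's inequality gives $\int_{Q(z,r)}|u_x|^3\le\bigl(\int_{Q(z,r)}|u_x|^{10/3}\bigr)^{9/10}(4r^5)^{1/10}$, so combining with the previous inequality yields, for all $z\in S\cap K$ and $r\le\delta_0$,
\[
\int_{Q(z,r)}|u_x|^{10/3}\ge c_0\,r^{5/3},\qquad c_0=(\varepsilon_0/4^{1/10})^{10/9}.
\]
Then, for fixed $\delta\le\delta_0$, I would apply a Vitali-type covering lemma to the family $\{Q(z,\delta):z\in S\cap K\}$ (parabolic cylinders being balls of the metric $\varrho((x,t),(y,s))=\max\{|x-y|,|t-s|^{1/4}\}$) to extract a finite pairwise disjoint subfamily $Q(z_i,\delta)$, $1\le i\le N$, with $S\cap K\subset\bigcup_i Q(z_i,5\delta)$. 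Disjointness, together with $Q(z_i,\delta)\subset K'$ and the displayed lower bound at radius $\delta$, forces $N\,c_0\delta^{5/3}\le E$, i.e.\ $N\le (E/c_0)\,\delta^{-5/3}$.

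The key point of the argument is then to cover $S\cap K$ not at scale $\delta$ but at the finer scale $\delta^4$. Each $Q(z_i,5\delta)$ is a rectangle of $x$-extent $\sim\delta$ and $t$-extent $\sim\delta^4$, hence is covered by $O(\delta^{-3})$ axis-parallel squares of side $\delta^4$; so $S\cap K$ is covered by $O(\delta^{-5/3})\cdot O(\delta^{-3})=O(\delta^{-14/3})$ such squares. Writing $\eta=\delta^4$ (legitimate for all small $\eta$ on setting $\delta=\eta^{1/4}$), this says $S\cap K$ is covered by $O(\eta^{-7/6})$ squares of side $\eta$, so that $|(S\cap K)_\eta|\le C\eta^{-7/6}(3\eta)^2=C'\eta^{5/6}$. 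Plugging this into \eqref{minkowski_def} with $n=2$ gives $\liminf_{\eta\to0^+}\log|(S\cap K)_\eta|/\log\eta\ge 5/6$ and therefore $d_B(S\cap K)\le 2-5/6=7/6$.

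I do not expect any deep obstacle; the only substantive points are (i) recognising that the improvement over the trivial bound $d_B(S)\le 5/4$ requires two refinements at once — replacing the crude $|u_x|^3\in L^1$ estimate by the sharper $|u_x|^{10/3}\in L^1$ estimate (this turns $r^2$ into $r^{5/3}$ in the lower bound) and resolving the thin cylinders at scale $\delta^4$ rather than $\delta$ — and (ii) the uniform-in-$z$ containment $Q(z,5\delta)\subset\TT\times(0,\infty)$, handled via the compact set $K'$. Everything else is routine bookkeeping with constants.
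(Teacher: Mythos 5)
Your proposal is correct and follows essentially the same route as the paper: the contrapositive of Theorem \ref{localreg1}, the H\"older upgrade to the $L^{10/3}$ bound \eqref{ux_is_in_L10/3} giving $\int_{Q(z,r)}|u_x|^{10/3}\gtrsim r^{5/3}$, a disjoint-family count of order $\delta^{-5/3}$, and then measuring the neighbourhood at the anisotropic scale $\delta^4$ via the Minkowski definition \eqref{minkowski_def} to get $|(S\cap K)_{\delta^4}|\lesssim\delta^{10/3}$ and hence $d_B\le 7/6$. The only (immaterial) differences are that the paper replaces your Vitali $5\delta$-covering by an elementary maximal-disjoint-family argument at radius $2r$, and bounds $|(S\cap K)_{r^4}|$ directly by the total volume of the cylinders $Q(z_i,2r)$ rather than tiling them by squares of side $\delta^4$.
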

The reason for considering the intersection $S\cap K$ (instead of $S$) is technical, recall the comments following \eqref{singset}. In fact, it suffices to take $S\cap ( \TT\times [a,b] )$ (instead of $S\cap K$) for $0<a<b$.
\begin{proof}
Let $\eta\coloneqq \inf \{ t^{1/4}\colon (x,t)\in K\text{ for some }x \} $. Given $r\in (0 , \eta )$ let
\begin{equation*}
\begin{split}
M_r &\coloneqq \text{ maximal number of pairwise disjoint }r\text{-cylinders with centres in }S\cap K,\\
N_r &\coloneqq \text{ minimal number of }r\text{-cylinders with centres in }S\cap K\text{ needed to cover }S\cap K.
\end{split}
\end{equation*}
\emph{Step 1.} We show that $M_r \leq c r^{-5/3}$ for sufficiently small $r$.\\
%

Let $Q(z_1,r), \ldots , Q(z_{M_r},r)$ be a family of pairwise disjoint cylinders with centres $z_i\in S\cap K$ ($i=1,\ldots , M_r$). Note that the choice of sufficiently small $r$ above guarantees that these cylinders are contained within $\TT\times (0,\infty)$.
The conditional regularity result of Theorem \ref{localreg1} guarantees that for sufficiently small $r$
\[
\frac{1}{r^2}\int_{Q(z_i,r)}|u_x|^3\ge\varepsilon_0, \qquad i = 1,\ldots , M_r.
\]
Thus, since H\"older's inequality gives
\[
\int_{Q(z_i,r)}|u_x|^3\leq c \left(\int_{Q(z_i,r)}|u_x|^{10/3}\right)^{9/10}r^{1/2},
\]
we obtain, using \eqref{ux_is_in_L10/3},
\begin{equation*}
\begin{split}
c &> \int_0^T \int |u_x|^{10/3} \geq \sum_{i=1}^{M_r} \int_{Q(z_i,r)} |u_x|^{10/3} \\
&\geq c \sum_{i=1}^{M_r} \left( r^{-1/2}  \int_{Q(z_i,r)}|u_x|^3\right)^{10/9}\\
&\geq c  \sum_{i=1}^{M_r} r^{5/3} \varepsilon_0^{10/9}\\
&=c M_r r^{5/3}.
\end{split}
\end{equation*}
At this point it is interesting to note that since
$$
d_B(S\cap K)\leq \limsup_{r\to0}\frac{\log M_r}{-\log r}
$$
this bound on $M_r$ implies that $d_B(S\cap K)\le 5/3$ (as in the context of the Navier--Stokes equations, see \cite{rob_sad_2009}), but this does not improve on the bound $5/4$ mentioned above. However, unlike in the case of the Navier--Stokes equations, the use of the Minkowski definition \eqref{minkowski_def} gives a sharper bound (which is, in essence, a consequence of a dimensional analysis of the SGM; that is, roughly speaking, the dimension of time, $4$, is larger than the space dimension, $1$), which we show in the following steps.\\

\noindent\emph{Step 2.} We show that $N_{2r}\le M_r$ for all $r\in (0,\eta/2)$.\\

Let $\{ Q(z_i ,r) \}_{i=1}^{M_r}$ be a family of pairwise disjoint cylinders with centres $z_i=(x_i,t_i)\in S\cap K$. We will show that the family $\{ Q(z_i , 2 r )\}_{i=1}^{M_r}$ covers $S\cap K$, which proves the inequality above.
Indeed, suppose that this is not true, so that there exists $z_0=(x_0,t_0)\in S\cap K$ such that
\[
z_0 \not \in \bigcup_{i=1}^{M_r} Q(z_i, 2r )
\]
Then for each $i$
\[
|x_0-x_i |\geq 2r \qquad\mbox{or}\qquad
|t_0-t_i|\geq (2r)^4 > 2r^4,
\]
which shows that
\[
Q(z_0,r) \quad \text{ and } \quad Q(z_i,r)\quad \text{ are disjoint.}
\]
Thus $\{ Q(z_i,r) \}_{i=0}^{M_r}$ is a family of pairwise disjoint cylinders with centres in $S\cap K$, which contradicts the definition of $M_r$.\\

\noindent\emph{Step 3.} We deduce that $d_B(S\cap K) \leq 7/6$.\\

For $r<\min \{ 1, R_0 ,\eta/2\}$ let $\{ Q(z_i,r) \}_{i=1}^{N_{r}}$ be a family of pairwise disjoint $r$-cylinders which cover $S\cap K$ with centres $z_i=(x_i,t_i)\in S\cap K$. Note that
\eqnb\label{square_temp}
(S\cap K)_{r^4} \subset \bigcup_{i=1}^{N_r} Q(z_i,2r).
\eqne
Indeed, given $z=(x,t)\in (S\cap K)_{r^4}$ let $z_0\in S\cap K$ be such that $|z-z_0|<r^4$ and suppose that $z_0=(x_0,t_0)\in Q(z_i,r)$ for some $i\in \{ 1, \ldots , N_r \}$. Then
\begin{equation*}
\begin{split}
|x-x_i|& \leq |x-x_0|+|x_0-x_i|< r^4 +r<2r,\\
|t-t_i|& \leq |t-t_0|+|t_0-t_i|< 2r^4 < (2r)^4,
\end{split}
\end{equation*}
that is $z\in Q(z_i,2r)$, which shows \eqref{square_temp}. Therefore, using steps 1 and 2, we obtain
\[
|(S\cap K)_{r^4}|\leq N_r 2^7 r^5 \leq M_{r/2} 2^7 r^5 \leq c\, r^{10/3}.
\]
Letting $\delta \coloneqq r^4$ we obtain
\[
|(S\cap K)_\delta | \leq c\, \delta^{5/6}
\]
for all sufficiently small $\delta >0$. Thus
\[
\frac{\log |(S\cap K)_\delta |}{\log \delta } \geq \frac{\log c + \frac{5}{6}\log \delta }{\log \delta } \to \frac{5}{6} \qquad \text{ as } \delta \to 0^+,
\]
and so \eqref{minkowski_def} gives $d_B(S\cap K) \leq 7/6$.
\end{proof}
 Note that the above corollary gives in particular a similar bound on the Hausdorff dimension, $d_H (S\cap K) \leq 7/6$ (since $d_H(K) \leq d_B(K)$ for any compact $K$, by a property of the Hausdorff dimension, see, for example, Proposition 3.4 in Falconer \cite{falconer}), and so $|S\cap K|=0$ for any compact set $K$, which implies that $|S|=0$.

\section{The second conditional and partial regularity results}\label{sec_2nd_local_reg}

Here we show that there exists $\varepsilon_1>0$ such that any suitable weak solution $u$ is regular at $z=(x,t)$ whenever
\[
\limsup_{r\to 0} \frac{1}{r}\int_{Q(z,r)} u_{xx}^2 \leq \varepsilon_1 
\]
or
\[
  \limsup_{r\to 0}\left\{\mathrm{ess\,sup}_{s\in (t-r^4, t+r^4)}\frac{1}{r}\int_{B_r(x)}u(s)^2 \right\}<\varepsilon_1.
\]
Given $z=(x,t)$ we will write $B_r \coloneqq (x-r,x+r)$, $Q_r \coloneqq Q(z,r)$ and we will denote by $(u(s))_r \coloneqq (2r)^{-1} \int_{B_r} u(s)$ the mean of $u(s)$ over $B_r$. We will use the following quantities:
\begin{align*}
  A (r)&\coloneqq \mathrm{ess\,sup}_{s\in (t-r^4,t+r^4)} \frac{1}{r}\int_{B_r} u(s)^2\,\d x,\\
  \overline{A} (r)&\coloneqq \mathrm{ess\,sup}_{s\in (t-r^4,t+r^4)} \frac{1}{r}\int_{B_r}\left( u(s)-(u(s))_r \right)^2\,\d x,\\
  E(r)&\coloneqq \frac{1}{r}\int_{Q_r}u_{xx}^2\,\d z,\\
  W(r)&\coloneqq  \frac{1}{r^5}\int_{Q_r}|u|^3\,\d z,\\
  \Y(r)&\coloneqq \frac{1}{r^2}\int_{Q_r}|u_x|^3\,\d z.
\end{align*}
We note that each of the above quantities is invariant with respect to the scaling $u(x,t)\mapsto u(\lambda x,\lambda^4t)$. Furthermore $W$ and $\Y$ can be estimated in terms of $A$, $\overline{A}$ and $E$, which we make precise in the following lemma.
\begin{lemma}[Interpolation inequalities]
For every $r>0$
\begin{eqnarray}
W(r) &\leq& c A(r)^{11/8} E(r)^{1/8}+ c A(r)^{3/2} , \label{interp_W}\\
\Y(r) &\leq& c \overline{A}(r)^{5/8} E(r)^{7/8}. \label{interp_Y}
\end{eqnarray}
\end{lemma}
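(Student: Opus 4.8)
The plan is to establish both inequalities by interpolation arguments that combine a Sobolev/Gagliardo--Nirenberg type bound in the spatial variable with the supremum-in-time control provided by $A(r)$ (respectively $\overline A(r)$) and the $L^2$-in-spacetime control of $u_{xx}$ provided by $E(r)$. Throughout I would work on a fixed cylinder $Q_r = B_r \times (t-r^4,t+r^4)$ and keep careful track of the powers of $r$ so that the scaling-invariance of $W$, $\Y$, $A$, $\overline A$, $E$ is respected at every step — this bookkeeping is really the engine that forces the exponents $11/8, 1/8, 3/2$ and $5/8, 7/8$ to come out as stated.

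For \eqref{interp_Y}: I would first note that $u_x$ has zero mean on $B_r$ at (almost) every time, since $\int_{B_r}(u - (u(s))_r)_x = 0$ only at the endpoints, so instead I use the Poincar\'e/Sobolev inequality in the form $\|u_x(s)\|_{L^\infty(B_r)} \le c\,\|u_x(s)\|_{L^2(B_r)}^{1/2}\|u_{xx}(s)\|_{L^2(B_r)}^{1/2} + c r^{-1/2}\|u_x(s)\|_{L^2(B_r)}$, and I would also invoke Poincar\'e to write $\|u_x(s)\|_{L^2(B_r)} \le c\,\|u-(u(s))_r\|_{L^2(B_r)}^{1/2}\|u_{xx}(s)\|_{L^2(B_r)}^{1/2}$ (via $\|u_x\|_{L^2}^2 = -\int (u-(u(s))_r)u_{xx}$, integrating by parts, then Cauchy--Schwarz). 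Then
\[
\int_{Q_r}|u_x|^3 \le \int_{t-r^4}^{t+r^4}\|u_x(s)\|_{L^\infty(B_r)}\|u_x(s)\|_{L^2(B_r)}^2\,\d s,
\]
and substituting the two bounds above and pulling $\|u-(u(s))_r\|_{L^2(B_r)}$ out in the $L^\infty_s$ norm, with the leftover powers of $\|u_{xx}(s)\|_{L^2(B_r)}$ integrated in time via H\"older (the exponent works out to an $L^2_s$ norm, matching $E$), should yield $\Y(r) \le c\,\overline A(r)^{5/8}E(r)^{7/8}$ after dividing by $r^2$ and collecting powers of $r$; one checks both $r$-powers agree because the quantities are scale-invariant, which is a useful consistency check.

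For \eqref{interp_W}: here $u$ itself need not have zero spatial mean, which is why the $A$-quantity (not $\overline A$) appears and why there is a second, lower-order term $c A(r)^{3/2}$. I would split $u = (u-(u(s))_r) + (u(s))_r$; the mean part contributes $\int_{Q_r}|(u(s))_r|^3 \le c r \cdot r^4 \cdot \sup_s |(u(s))_r|^3$ and $|(u(s))_r|^2 \le c r^{-1}\int_{B_r}u(s)^2 \le A(r)$, giving the $c A(r)^{3/2}$ term after dividing by $r^5$. For the fluctuation part I use $\|u-(u(s))_r\|_{L^\infty(B_r)}^3 \le \big(c\|u-(u(s))_r\|_{L^2(B_r)}^{1/2}\|u_x(s)\|_{L^2(B_r)}^{1/2} + c r^{-1/2}\|u-(u(s))_r\|_{L^2(B_r)}\big)^3$ together with the Poincar\'e bound on $\|u_x(s)\|_{L^2(B_r)}$ in terms of $\|u_{xx}(s)\|_{L^2(B_r)}$ and $\|u-(u(s))_r\|_{L^2(B_r)}$ as above, then integrate in time: pulling out the $L^\infty_s$ norms of $\|u-(u(s))_r\|_{L^2(B_r)}$ and integrating the residual power of $\|u_{xx}(s)\|_{L^2(B_r)}$ (which again lands in $L^2_s$, i.e.\ $E$) produces the $c A(r)^{11/8}E(r)^{1/8}$ term; one must also absorb/estimate the lower-order $r^{-1/2}$ contributions by $A(r)^{3/2}$ using $r<$ something or simply by Young's inequality, and there may be additional lower-order terms of the form $A(r)^a E(r)^b$ with $a+b$ and $r$-powers forcing them to be dominated by the two displayed terms.

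\textbf{Main obstacle.} The delicate point is not any single inequality but the simultaneous matching of (a) the powers so that both sides are scale-invariant and (b) the time-integration exponents so that the residual $u_{xx}$-factor is exactly an $L^2_s$ norm, i.e.\ exactly $E(r)^{1/2}$ to some power, with no mismatch — this rigidly pins down that one must raise the spatial $L^\infty$ bound to specific fractional powers before integrating in $s$. Equivalently, one has to choose the interpolation parameters in the Gagliardo--Nirenberg step so that, after raising to the third power and integrating in time, H\"older's inequality in $s$ closes with the $u_{xx}$-term in $L^2_s$; getting that algebra to land on $1/8$ and $7/8$ (rather than failing) is where all the care goes, and the appearance of the extra $A(r)^{3/2}$ term in \eqref{interp_W} is precisely the signature that the zero-mean reduction is unavailable for $u$ and must be handled separately.
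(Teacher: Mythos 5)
Your overall strategy (split off the spatial mean, do a spatial interpolation at each fixed time, then H\"older in time) is the same as the paper's, and for \eqref{interp_W} your route does work: the extra contributions coming from the mean and from the domain-dependent lower-order terms in the Gagliardo--Nirenberg/Agmon inequalities all land in the $cA(r)^{3/2}$ term, exactly as in the paper. The genuine gap is in your treatment of \eqref{interp_Y}. The identity $\|u_x(s)\|_{L^2(B_r)}^2=-\int_{B_r}(u-(u(s))_r)u_{xx}$ is false on a bounded interval: integration by parts produces the boundary term $\bigl[(u-(u(s))_r)\,u_x\bigr]_{x-r}^{x+r}$, which does not vanish (likewise your opening claim that $u_x$ has zero mean on $B_r$ is false, since $\int_{B_r}u_x=u(x+r,s)-u(x-r,s)$). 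Consequently the ``clean'' Poincar\'e bound $\|u_x\|_{L^2(B_r)}\le c\|u-(u(s))_r\|_{L^2(B_r)}^{1/2}\|u_{xx}\|_{L^2(B_r)}^{1/2}$ fails outright: for $u$ affine in $x$ on $B_r$ the left-hand side is positive while the right-hand side is zero. The correct interval version carries an additive term $c\,r^{-1}\|u-(u(s))_r\|_{L^2(B_r)}$.

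This is not a cosmetic repair. If you insert the corrected inequalities into your scheme for $\Y$, the time integration produces, besides the desired $c\,\overline{A}(r)^{5/8}E(r)^{7/8}$, additional terms of the form $c\,\overline{A}(r)^{5/4}E(r)^{1/4}+c\,\overline{A}(r)^{3/4}E(r)^{3/4}+c\,\overline{A}(r)^{3/2}$, and these are not dominated by $\overline{A}(r)^{5/8}E(r)^{7/8}$ (take $E(r)$ small with $\overline{A}(r)$ of order one). So your argument proves only a weaker version of \eqref{interp_Y} with extra additive terms, and the clean form is precisely what is needed later: in Step 1 of the proof of Theorem \ref{thm_2nd_local_reg} the term $\Y^{2/3}$ must be absorbed as $\tfrac12\overline{A}+c(E+E^{10})$ via Young's inequality, which requires every term in \eqref{interp_Y} to carry a positive power of $E$ (equivalently a power of $\overline{A}$ strictly below $1$ after taking the $2/3$ power); an extra $\overline{A}^{3/2}$ in \eqref{interp_Y} would leave a term $c\,\overline{A}$ with a non-small constant that cannot be absorbed. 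The paper avoids exactly this obstruction by a different device: after subtracting $(u(t))_r$ it bounds $\|u_x(t)\|_{L^3(B_r)}$ by the homogeneous fractional seminorm $\|u(t)-(u(t))_r\|_{\dot H^{7/6}}$, computed via the Fourier expansion on $B_r$, and then interpolates that seminorm between $\|u(t)-(u(t))_r\|_{L^2}$ and $\|u_{xx}(t)\|_{L^2}$, so that no additive lower-order term ever appears. So the real difficulty is not the exponent bookkeeping you flag as the ``main obstacle'' (your exponents for the leading terms are correct), but the boundary contributions on $B_r$, and your elementary $L^\infty$--$L^2$ route does not get around them.
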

\begin{proof}
Due to scale-invariance we can assume that $r=1$. As for the estimate on $W(1)$ we write $\overline{u}(t)\coloneqq (u(t))_1$ and apply the decomposition
\[
u(x,t)= \left( u(x,t) - \overline{u} (t) \right) + \overline{u} (t) =: v(x,t)+ \overline{u}(t)
\]
Applying the $1D$ embedding $H^{1/6} \subset L^3 $ and using the fact that $v(t)$ has zero mean we can write (for each $t$)
\[
  \|v \|_{L^3(B_1)}^3\leq c \|v\|_{H^{1/6}(B_1)}^3 \leq c \|v\|_{\dot{H}^{1/6}(B_1)}^3,
\]
and so, by Sobolev interpolation,
\[
  \|v \|_{L^3(B_1)}^3\leq c \|v\|_{L^2(B_1)}^{11/4}\|\p_{xx}v \|_{L^2(B_1)}^{1/4}\leq  c \|u\|_{L^2(B_1)}^{11/4}\|\p_{xx}u \|_{L^2(B_1)}^{1/4},
\]
where we also used the fact that $\| v \|_{L^2 (B_1)} \leq 2 \| u \|_{L^2 (B_1)}$. Thus
\begin{equation*}
  \begin{split}
  \int_{-1}^1 \|v(t) \|_{L^3}^3 \d t&\leq c\left(\mathrm{ess\,sup}_{t\in (-1,1)}\|v(t)\|_{L^2}\right)^{11/4}\left(\int_{-1}^1 \|\p_{xx} u(t)\|_{L^2}^{1/4} \d t\right)\\
  &\leq c \, A(1)^{11/8}E(1)^{1/8}.
  \end{split}\end{equation*}
  We also have
  \begin{equation*}
  \begin{split}
  \int_{-1}^1 \|\overline{u}(t) \|_{L^3}^3 \d t&= c \int_{-1}^1 \left|  \int_{-1}^1 {u}(x,t) \,\d x \right|^3 \d t \leq c \int_{-1}^1 \left(  \int_{-1}^1 u(x,t)^2 \d x\right)^{3/2} \d t \\
  &\leq c \, A(1)^{3/2}.
  \end{split}
  \end{equation*}
  The last two inequalities show the required estimate on $W(1)$.\\
  
   As for the estimate on $\Y(1)$, we let $v(x,t)\coloneqq u(x,t) - (u(t))_1$ and write (for each $t$)
\begin{equation*} \begin{split}
  \|v_x\|_{L^3(B_1)}^2&\leq c\|v_x\|_{H^{1/6}(B_1)}^2\le c \sum_{k\in \ZZ} \left( 1+|k|^{1/3}\right) \left| \widehat{v_x} (k) \right|^2 \\
  &=c \sum_{k\ne 0} \left( k^2+|k|^{2+1/3}\right) \left| \widehat{v} (k) \right|^2 \leq c \sum_{k\ne 0} |k|^{2+1/3} \left| \widehat{v} (k) \right|^2\\
  &\le  c \| v \|_{\dot{H}^{7/6}(B_1)}^2 ,
  \end{split}
  \end{equation*}
  where $\hat{f} (k)$ denotes the $k$-th Fourier mode in the Fourier expansion of $f$ on $(-1,1)$. Applying Sobolev interpolation we obtain
  \[
  \|v_x\|_{L^3(B_1)}^3 \leq c \| v \|_{\dot{H}^{7/6}(B_1)}^3 \leq  c \| v \|_{L^2 (B_1)}^{5/4} \| \p_{xx} v \|_{L^2(B_1)}^{7/4},
  \]
  and thus
  \begin{align*}
  \Y(1)&=\int_{-1}^1\|v_x(t)\|_{L^3}^3 \d t \leq c\left(\mathrm{ess\,sup}_{t\in (-1,1)}\|v(t)\|_{L^2}\right)^{5/4}\int_{-1}^1 \|\p_{xx} v(t) \|_{L^2}^{7/4} \d t\\
   &\leq c \,\overline{A}(1)^{5/8}E(1)^{7/8}.
  \end{align*}
\end{proof}

We can now state the main theorem of this section.
\begin{theorem}[Conditional regularity in terms of $u_{xx}$]\label{thm_2nd_local_reg}
Given $\beta\in (0,1)$ there exists an $\varepsilon_1>0$ such that if
\be{Econd}
  \limsup_{r\to0}\frac{1}{r}\int_{Q(z,r)}u_{xx}^2<\varepsilon_1
\ee
  then $u$ is $\beta$-H\"older continuous (as in \eqref{holder_continuity_def}) in $Q(z,\rho)$ for some $\rho>0$. 
\end{theorem}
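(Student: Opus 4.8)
The plan is to reduce the $u_{xx}$-condition \eqref{Econd} to the $u_x$-condition of Theorem \ref{localreg1}, via the interpolation inequalities just established and a decay argument for the excess quantities $A$, $\overline A$, $E$. First I would fix a point $z$ satisfying \eqref{Econd}, so that $E(r)<\varepsilon_1$ for all sufficiently small $r$, say $r<r_0$. Since $E(r)$ controls $\int_{Q_r}u_{xx}^2$ and $u\in L^\infty_tL^2_x\cap L^2_tH^2_x$, one gets control of $\overline A(r)$ through a suitable localized (in space) Poincar\'e/Sobolev estimate: $\overline A(r)\le c\,r^{-1}\sup_s\int_{B_r}(u-\overline u)^2\le c\,r\sup_s\|u_{xx}(s)\|_{L^2(B_r)}^2$-type bounds are not quite immediate because the $L^\infty_t$ bound is global, so the honest way is to use the local energy inequality \eqref{LEI_alt_form}. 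I would test \eqref{LEI_alt_form} with a cutoff supported in $Q_r$ and equal to $1$ on $Q_{r/2}$: the left-hand side then bounds $\overline A(r/2)$ (after subtracting the constant $K=(u(t))_r$, using the remark after Definition \ref{def_of_suitable_weak_sol} that \eqref{LEI_alt_form} is invariant under $u\mapsto u-K$) and a further copy of $E(r/2)$, while the right-hand side is estimated by terms of the form $r^{-1}$ or $r^{-2}$ times integrals of $u^2$, $u_x^2$, $u_x^3$, $u_x^2u$ over $Q_r$. Using \eqref{interp_W}, \eqref{interp_Y} and Young's inequality these are absorbed into $c(\overline A(r)+E(r))$ plus superlinear remainders $c\,(\overline A(r)+E(r))^{1+\delta}$ for some $\delta>0$ (coming from the cubic and quartic nonlinear terms, which carry an extra power).

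The heart of the argument is then a \textbf{one-step decay estimate for} $g(r):=\overline A(r)+E(r)$: I claim there is $\gamma\in(0,1)$ and $\theta\in(0,1/2)$ such that if $g(r)\le\varepsilon_1$ for $r<r_0$ then $g(\theta r)\le \gamma\, g(r)$. This is proved by a rescaling/compactness argument exactly parallel to Lemma \ref{onedecay}: rescale $u$ at $z$ by $u_k(x,t)=(u(x_k+xr_k,t_k+tr_k^4)-(u(t_k))_{r_k})/\delta_k^{1/2}$ with $\delta_k=g(z_k,r_k)\to0$, use the scale-invariance of $A,\overline A,E,W,\Y$ noted before the interpolation lemma, pass to a limit which (because the nonlinear terms in \eqref{u_k_LEI} carry a factor $\delta_k^{1/2}\to0$) solves the biharmonic heat equation, invoke interior regularity (Proposition \ref{prop_magic}) to get smallness of $\overline A(\theta)+E(\theta)$ for the limit, and obtain a contradiction with the normalization at scale $\theta$. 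As in Lemma \ref{onedecay} the needed strong convergence of $\p_{xx}u_k$ (and $u_k$) in the relevant $L^2$/$L^3$ spaces on $Q_{1/2}$ follows from \eqref{EI_u_k}-type bounds together with Aubin--Lions applied to the rescaled equation. Iterating the one-step estimate gives $g(\theta^m r)\le\gamma^m g(r)$, hence a power decay $g(\varrho)\le C(\varrho/r)^\mu g(r)$ for some $\mu>0$.

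Finally I would feed this back into \eqref{interp_Y}: since $\overline A(\varrho)\le g(\varrho)$ and $E(\varrho)\le g(\varrho)$, the decay of $g$ gives $\Y(\varrho)=\varrho^{-2}\int_{Q_\varrho}|u_x|^3\le c\,\overline A(\varrho)^{5/8}E(\varrho)^{7/8}\le c\,g(\varrho)^{3/2}\to 0$ as $\varrho\to0$. In particular there is some $\rho>0$ with $\varrho^{-2}\int_{Q(z,\varrho)}|u_x|^3<\varepsilon_0$, where $\varepsilon_0$ is the constant from Theorem \ref{localreg1} (applied with the given $\beta$). That theorem then yields $\beta$-H\"older continuity of $u$ in $Q(z,\rho/2)$, with the estimate \eqref{holder_continuity_def}, which is the claim. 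The main obstacle I anticipate is making the localized energy estimate in the first step clean: one must carefully choose the cutoff scales, keep track of how the subtraction of the time-dependent mean $(u(t))_r$ interacts with the cutoff (the remark after Definition \ref{def_of_suitable_weak_sol} only licenses subtracting a genuine constant, so one subtracts $(u(t_0))_r$ at the fixed base time, or works with differences $u(t)-u(s)$ via the weak formulation as in Theorem \ref{PPI}), and verify that every nonlinear contribution is strictly superlinear in $g$ so that the recursion closes for $\varepsilon_1$ small. The rest is a routine adaptation of the rescaling machinery already developed in Section \ref{sec_1st_local_reg}.
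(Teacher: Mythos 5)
There is a genuine gap at the heart of your plan: your one-step decay lemma for $g(r)=\overline{A}(r)+E(r)$ is stated under the hypothesis $g(r)\le\varepsilon_1$, but the assumption \eqref{Econd} only makes $E(r)$ small — it gives no smallness (indeed no decay at all, a priori) of the $L^\infty_t$ quantity $\overline{A}(r)$ at any scale, and your preliminary localized energy estimate, as you state it, only yields $g(r/2)\le c\,g(r)+c\,g(r)^{1+\delta}$ with a constant $c$ that may exceed $1$, which cannot start the iteration. Converting ``$E$ small'' into ``$\overline{A}+E$ small'' is precisely the crux, and the paper does it by proving, from the local energy inequality \eqref{LEI_alt_form} applied to $u-[u]^\sigma$ together with the nonlinear parabolic Poincar\'e inequality (Corollary \ref{cor_to_poincare}) and the interpolation $\Y\le c\,\overline{A}^{5/8}E^{7/8}$, the estimate $\overline{A}(r/2)+E(r/2)\le\tfrac12\overline{A}(r)+c\bigl(E(r)+E(r)^{10}\bigr)$ for \emph{every} $r$, with no smallness assumption: the point is that $\overline{A}$ enters the right-hand side only through powers strictly less than one (via $\Y$), so Young's inequality absorbs it with coefficient $\tfrac12$, leaving a source term depending on $E$ alone. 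Iterating this when $E(r)<\varepsilon_1$ makes $\overline{A}(2^{-k}r_0)+E(2^{-k}r_0)\le\varepsilon_0^{2/3}$ for large $k$, and then \eqref{interp_Y} gives $\Y(2^{-k}r_0)\le\varepsilon_0$, so Theorem \ref{localreg1} applies. No blow-up or compactness is needed for this theorem at all.

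A second, more technical objection to your proposed rescaling proof of the decay of $g$: the available bounds give only \emph{weak} $L^2$ convergence of $\p_{xx}u_k$ (Aubin--Lions upgrades convergence only below $H^2$, e.g.\ in $L^3_tH^{7/6}_x$ as in Lemma \ref{onedecay}), so you cannot pass $E_k(\theta)\to E_\infty(\theta)$ — lower semicontinuity goes the wrong way for your contradiction — and controlling $\overline{A}_k(\theta)$ and $E_k(\theta)$ would force you to run the local energy inequality for $u_k$ at the smaller scale anyway, i.e.\ to reprove the very estimate above in rescaled form. Your instinct about the mean subtraction is also resolved differently in the paper: one subtracts the genuine constant $[u]^\sigma$ (the $\sigma$-weighted space-time mean), not $(u(t_0))_r$, and the comparison between $u^\sigma(t)$ and $[u]^\sigma$ is exactly what the parabolic Poincar\'e machinery of Section \ref{sec_PPI} provides. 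The final reduction to Theorem \ref{localreg1} via \eqref{interp_Y} is correct and is the same as the paper's.
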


\begin{proof} The proof is inspired by Lin \cite{lin} and Kukavica \cite{kukavica_2009}. Without loss of generality we can assume that $z=(0,0)$. We will show that (\ref{Econd}) implies that
  \eqnb\label{y_will_be_small}
  \Y(r)\leq \varepsilon_0 \qquad \text{ for some }r\in (0,R_0),
  \eqne
which, in the light of Theorem \ref{localreg1}, proves the theorem.\\

\noindent\emph{Step 1.} We show the estimate
\[
\overline{A}(r/2)+E(r/2) \leq \frac{1}{2}\overline{A}(r)+c\left( E(r) + E(r)^{10}  \right)\qquad \text{ for any } r>0.
\]
Due to the scale invariance it is sufficient to take $r=1$. For brevity we will write $\overline{A}\coloneqq \overline{A}(1)$, $E\coloneqq E(1)$, $\Y\coloneqq \Y(1)$, as well as $B\coloneqq B_1$, $Q\coloneqq Q_1$. Let $\phi \in C_0^\infty (Q_{3/4};[0,1])$ be such that $\phi=1$ on $Q_{1/2}$ and $|\p_t \phi|, |\p_x^k \phi| \leq c$ for all $k\leq 4$.

Furthermore, let $\sigma \in C_0^\infty (B_1 ; [0,1])$ be such that $\sigma =1$ on $B_{3/4}$. 

We set 
\[
u^\sigma (t) \coloneqq \frac{\int_{B} u(t) \sigma \,\d x}{\int_B \sigma \,\d x}\quad \text{ and } \quad  [u]^\sigma  \coloneqq \frac{\int_{Q} u \sigma }{\int_Q \sigma }.
\]
In other words, recalling the notation \eqref{sigma_means_notation}, used in the proof of the Parabolic Poincar\'e inequality, we have $u^\sigma \equiv u^\sigma_1$, $[u]^\sigma \equiv [u]^\sigma_1$. Recall the Poincar\'e inequality \eqref{fact1_eq},
\eqnb\label{poincare_ineq_restated}
\int_{B} \left| u(t) - u^\sigma (t) \right|^3 \sigma  \leq c \int_{B} |u_x (t)|^3 \sigma ,
\eqne
and Corollary \ref{cor_to_poincare} (with $\eta = 1$),
\eqnb\label{claim_of_cor_to_ppi}
\int_Q \left| u-[u]^\sigma \right|^3\sigma\, \leq c \left( Y+ Y^2\right).
\eqne
Observe also that for almost every $t\in (-1,1)$ 
\eqnb\label{estimate_means_s_t_restated}
|u^\sigma (t) - [u]^\sigma |^{3}\leq c \left( Y+ Y^2\right),
\eqne
due to \eqref{est_of_the_means_other} with $\eta =1$.

The local energy inequality \eqref{LEI_alt_form} for $u-[u]^\sigma $ (recall the comments following \eqref{LEI_alt_form}) gives
\begin{align*}
  \overline{A}(1/2)&+E(1/2)\leq {2}\,\mathrm{ess\,sup}_{s\in (-2^{-4}, 2^{-4})}\int_{B_{1/2}}\left( u(s)- [u]^\sigma \right)^2+2\int_{Q_{1/2}}u_{xx}^2\\
  &\leq c\int_{Q_{3/4}} \left( u- [u]^\sigma \right)^2  +  c\int_{Q} \left( u_x^2+|u_x|^3\right) + c\int_{-(3/4)^{4}}^{(3/4)^{4}} \left| \int_{B_{3/4}} u_x^2\left( u- [u]^\sigma \right) \phi_{xx} \right|\\
  & \leq c \left( \int_{Q_{3/4}} \left| u- [u]^\sigma \right|^3 \right)^{2/3}  +  c(\Y^{2/3} +\Y ) +c \int_{Q_{3/4}} u_x^2\left| \left( u- u^\sigma  \right) \phi_{xx} \right|\\
  &\hspace{6.1cm}+c \int_{-1}^1 \left|\left( u^\sigma - [u]^\sigma \right) \int_{B} u_x^2 \phi_{xx} \right|,
\end{align*}
where we used the fact that $\int_B (f-(f)_1)^2 \leq \int_B (f-K)^2$ for any $K\in \RR$, $f\in L^2 (B)$ in the first line, the fact that $\mathrm{supp}\,\phi\subset Q_{3/4}$ in the second line, and H\"older's inequality and triangle inequality in the third line. Now, by applying \eqref{claim_of_cor_to_ppi} to the first of the resulting terms and integrating the last term by parts, we obtain
\begin{align*}
    \overline{A}(1/2)+E(1/2)&\leq c (\Y^{2/3} +\Y^{4/3} )+ c \int_{Q_{3/4}} u_x^2\left|  u- u^\sigma  \right|\\
    &\hspace{3cm}+c\int_{-1}^1 \left|\left( u^\sigma - [u]^\sigma \right) \int_{B} u_x u_{xx} \phi_{x} \right|\\
  & \leq c (\Y^{2/3} +\Y^{4/3} )+ c \Y^{2/3} \left( \int_{Q} \left|  u- u^\sigma  \right|^3 \sigma \right)^{1/3}\\
  &\hspace{3cm}+ c \left( \mathrm{ess\, sup}_{s\in(-1,1)} \left| u^\sigma (s)- [u]^\sigma \right|  \right) \int_{Q}  \left| u_x u_{xx}  \right|,
\end{align*}
where we also applied H\"older's inequality and used the fact that $\sigma =1$ on $Q_{3/4}$ in the second line. Finally, applying \eqref{poincare_ineq_restated}, \eqref{estimate_means_s_t_restated}, the Cauchy-Schwarz inequality and H\"older's inequality gives
\begin{align*}  
   \overline{A}(1/2)+E(1/2)  &\leq c (\Y^{2/3} +\Y^{4/3} )+  c \left( \Y^{1/3} + \Y^{2/3} \right) \Y^{2/3} E^{1/2} \\
  &= c (\Y^{2/3} +\Y^{4/3} )+  c E^{1/2} \left( \Y + \Y^{4/3} \right) \\
  &\leq c \left( \overline{A}^{5/12}E^{7/12}+ \overline{A}^{5/6}E^{7/6} + \overline{A}^{5/8}E^{11/8} + \overline{A}^{5/6} E^{5/3}  \right)\\
  &\leq \frac{1}{2}\overline{A} + c \left( E + E^{10}\right),
  \end{align*}
  as required, where we also used the interpolation inequality \eqref{interp_Y} in the third line, and Young's inequality $ab\leq \delta a^p + C_\delta b^q$ (where $1/p+1/q=1$ and sufficiently small $\delta >0$) in the last line.\\
  
\noindent\emph{Step 3.} We show \eqref{y_will_be_small}.\\

Let $\varepsilon_1 >0$ be small enough that
\[
c\left( \varepsilon_1  +\varepsilon_1^{10}  \right) \leq \frac{1}{4}\varepsilon_0^{2/3}.
\]
By assumption there exists $r_0$ such that $E(r)<\varepsilon_1$ for $r\in (0,r_0]$. From Step 2
\[
\overline{A}(r/2)+E(r/2) \leq \frac{1}{2} \overline{A}(r) + \frac{1}{4}\varepsilon_0^{2/3},\quad r\in (0,r_0],
\]
and iterating this inequality $k$ times we obtain
\[
\overline{A}(2^{-k}r_0)+E(2^{-k}r_0 )\leq 2^{-k} \overline{A}(r_0) + \frac{1}{4}\varepsilon_0^{2/3} \sum_{j=0}^{k-1} 2^{-j} \leq 2^{-k} \overline{A}(r_0) + \frac{1}{2}\varepsilon_0^{2/3} .
\]
Thus for sufficiently large $k$
\[
\overline{A}(2^{-k}r_0)+E(2^{-k}r_0 )\leq \varepsilon_0^{2/3},
\]
and so interpolation inequality \eqref{interp_Y} gives
\[
\Y(2^{-k}r_0) \leq \overline{A}(2^{-k}r_0)^{5/8} E(2^{-k}r_0)^{7/8} \leq \varepsilon_0^{5/12} \varepsilon_0^{7/12} = \varepsilon_0   ,
\]
as required.
\end{proof}
\begin{corollary}[Conditional regularity in terms of $\mathrm{ess\,sup}_t \int_{B_r} u(t)^2$]
There exists an $\varepsilon_2>0$ such that if
\be{Acond}
  \limsup_{r\to 0}\left\{\mathrm{ess\,sup}_{s\in (t-r^4, t+r^4)}\frac{1}{r}\int_{B_r(x)}u(s)^2 \right\}<\varepsilon_2
\ee
  then $u$ is $\beta$-H\"older continuous in $Q(z,\rho)$ for some $\rho>0$.

\end{corollary}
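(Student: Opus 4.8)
The plan is to reduce the statement to Theorem~\ref{thm_2nd_local_reg}: I would show that, for $\varepsilon_2$ small enough, the hypothesis~\eqref{Acond} forces $\limsup_{r\to0}E(r)=\limsup_{r\to0}\frac1r\int_{Q(z,r)}u_{xx}^2$ to be smaller than the constant $\varepsilon_1=\varepsilon_1(\beta)$ provided by Theorem~\ref{thm_2nd_local_reg}, whereupon the desired $\beta$-H\"older continuity in some $Q(z,\rho)$ follows immediately. As usual take $z=(0,0)$. First I would note that $\overline{A}(r)\le A(r)$ for every $r$ (since $\int_{B_r}(u(s)-(u(s))_r)^2\le\int_{B_r}u(s)^2$), so \eqref{Acond} furnishes an $r_0>0$ with $A(r),\overline{A}(r)<\varepsilon_2$ and $Q(z,r)\subset\TT\times(0,\infty)$ for all $r\le r_0$.

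The heart of the argument is a Caccioppoli-type estimate, namely $E(r/2)\le\tfrac12 E(r)+C\varepsilon_2$ for all $r\le r_0$, with $C$ absolute. To prove it I would run the computation of Step~2 in the proof of Theorem~\ref{thm_2nd_local_reg}, but retain the $A$-dependent term instead of discarding it. Concretely, take $\phi\in C_0^\infty(Q(z,3r/4);[0,1])$ with $\phi=1$ on $Q(z,r/2)$ and $|\partial_t\phi|\le Cr^{-4}$, $|\partial_x^k\phi|\le Cr^{-k}$, and $\sigma\in C_0^\infty(B_r;[0,1])$ with $\sigma=1$ on $B_{3r/4}$, and apply the local energy inequality~\eqref{LEI_alt_form} to $u-[u]^\sigma_r$ (legitimate, since a constant may be subtracted). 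The left-hand side controls $\int_{Q(z,r/2)}u_{xx}^2$. On the right-hand side, the term $\tfrac12(\phi_t-\phi_{xxxx})(u-[u]^\sigma_r)^2$ is bounded by $cr^{-4}\int_{Q(z,r)}|u-[u]^\sigma_r|^2$; splitting $u-[u]^\sigma_r=(u-u^\sigma_r(t))+(u^\sigma_r(t)-[u]^\sigma_r)$, the first piece is controlled by $\overline{A}(r)$ (the weighted mean $u^\sigma_r(t)$ almost minimises the spatial $L^2$ deviation; cf.~\eqref{fact1_eq}) and the second by $A(r)$ (directly, via $|u^\sigma_r(t)|,|[u]^\sigma_r|\le cA(r)^{1/2}$ --- this is the only point where smallness of $A$ rather than merely of $\overline{A}$ is used), so after dividing by $r$ this term contributes at most $cA(r)$ to $E(r/2)$. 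The remaining right-hand terms $u_x^2\phi_{xx}$, $u_x^3\phi_x$, $u_x^2(u-[u]^\sigma_r)\phi_{xx}$ are bounded by H\"older's inequality together with \eqref{fact1_eq} and the nonlinear parabolic Poincar\'e inequality (Corollary~\ref{cor_to_poincare}), exactly as in the proof of Theorem~\ref{thm_2nd_local_reg}, and contribute at most $c(\Y(r)^{2/3}+\Y(r)+A(r)^{1/2}\Y(r)^{2/3})$ to $E(r/2)$. Finally I would insert the interpolation inequality~\eqref{interp_Y}, $\Y(r)\le c\,\overline{A}(r)^{5/8}E(r)^{7/8}\le c\,\varepsilon_2^{5/8}E(r)^{7/8}$, and apply Young's inequality $ab\le\delta a^p+C_\delta b^q$ with $\delta$ small to each $\Y(r)$-term: each becomes at most $\delta E(r)$ plus $C_\delta$ times a power $\ge1$ of $\varepsilon_2$; choosing $\delta$ so that the accumulated coefficient of $E(r)$ is $\le\tfrac12$ yields the Caccioppoli estimate.

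To finish, I would iterate. Every $r<r_0/2$ may be written $r=2^{-k}r'$ with $r'\in[r_0/2,r_0)$ and $k\ge1$, and then $E(r)\le 2^{-k}E(r')+2C\varepsilon_2$; since $E(r')\le(r_0/2)^{-1}\int_{Q(z,r_0)}u_{xx}^2<\infty$ (as $u_{xx}\in L^2$ locally), letting $r\to0$ gives $\limsup_{r\to0}E(r)\le 2C\varepsilon_2$. Taking $\varepsilon_2<\varepsilon_1/(2C)$ we obtain $\limsup_{r\to0}E(r)<\varepsilon_1$, and Theorem~\ref{thm_2nd_local_reg} then delivers $\beta$-H\"older continuity in $Q(z,\rho)$ for some $\rho>0$. (Alternatively, since also $\overline{A}(2^{-k}r')<\varepsilon_2$, \eqref{interp_Y} gives $\Y(2^{-k}r')\le c\,\varepsilon_2^{5/8}(2C\varepsilon_2)^{7/8}=c'\varepsilon_2^{3/2}<\varepsilon_0$ for $\varepsilon_2$ small, so one could instead invoke Theorem~\ref{localreg1} directly at scale $2^{-k}r'$.)

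The step I expect to be the main obstacle is the Caccioppoli estimate --- specifically, the bookkeeping ensuring that the cubic terms $-\tfrac53 u_x^3\phi_x$ and $-u_x^2u\phi_{xx}$ of the local energy inequality, after the substitution $\Y\le c\,\overline{A}^{5/8}E^{7/8}$ and Young's inequality, generate only contributions of the form $\delta E(r)$ (with $\delta$ at our disposal) plus quantities proportional to a positive power of $\varepsilon_2$. This is essentially the same mechanism as in Step~2 of Theorem~\ref{thm_2nd_local_reg}, with the roles of $E$ and $\overline{A}$ (equivalently $A$) interchanged: there $E$ is the small fixed quantity and $\overline{A}$ is controlled by the iteration, while here $A$ (hence $\overline{A}$) is small and fixed and $E$ is what the iteration controls.
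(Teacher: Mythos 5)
Your argument is correct, and its skeleton is the same as the paper's: a one-step (Caccioppoli-type) inequality for $E$ derived from the local energy inequality, with the forcing term made small by the hypothesis on $A$, followed by a dyadic iteration and a reduction to the $u_x$-criterion. The implementation differs in two respects. The paper applies the local energy inequality to $u$ itself and uses \emph{both} interpolation inequalities \eqref{interp_W} and \eqref{interp_Y} to obtain $A(r/2)+E(r/2)\le \tfrac12 E(r)+c\left(A(r)+A(r)^5\right)$, then reruns the iteration of Step~3 of Theorem \ref{thm_2nd_local_reg} and concludes via $\Y\le\varepsilon_0$ and Theorem \ref{localreg1}; you instead subtract the weighted mean $[u]^\sigma$, bound the mean oscillations directly by $cA(r)^{1/2}$, avoid \eqref{interp_W} altogether, and conclude by feeding $\limsup_{r\to0}E(r)\le 2C\varepsilon_2<\varepsilon_1$ into Theorem \ref{thm_2nd_local_reg} (your parenthetical alternative via $\Y\le c\varepsilon_2^{3/2}<\varepsilon_0$ and Theorem \ref{localreg1} is in fact closer to the paper and slightly more economical). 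One caution: your phrase ``exactly as in the proof of Theorem \ref{thm_2nd_local_reg}'' is not quite right, and the deviation is essential. Following that proof verbatim (integrating the term $u_x^2(u-[u]^\sigma)\phi_{xx}$ by parts and bounding $|u^\sigma(t)-[u]^\sigma|$ by powers of $\Y$ via Corollary \ref{cor_to_poincare}) produces terms such as $\overline{A}^{5/6}E^{7/6}$, $\overline{A}^{5/8}E^{11/8}$, $\overline{A}^{5/6}E^{5/3}$, whose $E$-exponents exceed $1$; since $E(r_0)$ is merely finite (not small), these cannot be absorbed as $\delta E(r)+C_\delta\varepsilon_2^{k}$, and the recursion could fail to contract. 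Your own bookkeeping --- no integration by parts in the cubic term, $\sup_t|u^\sigma(t)-[u]^\sigma|\le cA^{1/2}$, and only \eqref{interp_Y} --- yields $E(r/2)\le c\left(A+\Y^{2/3}+\Y+A^{1/2}\Y^{2/3}\right)$ with all $E$-exponents ($7/12$, $7/8$, $7/12$) strictly below $1$, so Young's inequality does give $E(r/2)\le\tfrac12E(r)+C\varepsilon_2$; this is precisely the point where, as you note, the roles of $E$ and $A$ are interchanged relative to Theorem \ref{thm_2nd_local_reg}, and it is why the paper's version of the one-step estimate keeps the $E$-powers sublinear by working with $u$ and $W$ instead.
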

\begin{proof}
The claim follows by replacing the estimate from Step 1 above by 
\eqnb\label{iteration_1}
A(r/2)+E(r/2) \leq \frac{1}{2} E(r) + c\left(A(r)+ A(r)^5\right),
\eqne
whose proof we defer for a moment. Indeed, then one can choose $\varepsilon_2>0$ sufficiently small such that $c\left( \varepsilon_2 + \varepsilon_2^5 \right)\leq  \varepsilon_0^{2/3}/4$ and the claim follows as in Step 3 above by noting that $\overline{A}\leq A$. We now verify \eqref{iteration_1}, where we assume that $r=1$, as before. Using the local energy inequality \eqref{LEI_alt_form} we obtain 
\begin{align*}
  A(1/2)&+E(1/2)\leq c \int_Q \left( u^2 + u_x^2 +|u_x|^3 + u_x^2 |u|\right)\\
  &\leq c \left(   A + \Y^{2/3} + \Y + \Y^{2/3} W^{1/3} \right)\\
  &\leq c \left(   A + A^{5/12}E^{7/12}  + A^{5/8} E^{7/8} + A^{43/24} E^{17/24} + A^{23/12} E^{7/12}  \right)\\
  &\leq \frac{1}{2} E + c \left( A + A^5\right),
  \end{align*}
  as required, where we used H\"older's inequality in the second line, the interpolation inequalities \eqref{interp_W}, \eqref{interp_Y} (together with a fact that $\overline{A}\leq A$) in the third line, and Young's inequality $ab\leq \delta a^p + c_\delta b^q$ (where $1/p+1/q=1$ and $\delta >0$ is chosen sufficiently small).
\end{proof}

Using Theorem \ref{thm_2nd_local_reg} we can obtain improved bounds on the dimension of the singular set in terms of the (parabolic) Hausdorff measure. For a set $X\subset\R\times\R$ and $k\ge0$ let
\eqnb\label{def_of_Pk}
P^k(X) \coloneqq \lim_{\delta\rightarrow0^+}P_\delta^k(X)
\eqne
denote the $k$-dimensional parabolic Hausdorff measure, where
$$
P_\delta^k(X)\coloneqq \inf\left\{\sum_{i=1}^\infty r_i^k:\ X\subset\bigcup_i Q_{r_i}:\ r_i<\delta\right\},
$$
and $Q_{r_i} = Q_{r_i}(x,t)$ is a $r_i$-cylinder, $i\geq 1$. Observe that $P^1(X)=0$ if and only if for every $\delta>0$ the set $X$ can be covered by a collection $\{Q_{r_i}\}$ such that $\sum_ir_i<\delta$.
\begin{corollary}[Partial regularity II]\label{cor_bound_on_dH}
The singular set $S$ of a suitable weak solution of \eqref{BR} satisfies $\mathcal{P}^1(S)=0$.
\end{corollary}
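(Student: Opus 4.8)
The plan is to run the Vitali covering argument of Caffarelli--Kohn--Nirenberg, using Theorem \ref{thm_2nd_local_reg} as the ``no $L^2$-concentration of $u_{xx}$ implies regularity'' input. I would first record two facts. The energy inequality $\|u(t)\|^2+2\int_0^t\|u_{xx}(s)\|^2\,\d s\le\|u_0\|^2$ (obtained by passing to the limit in \eqref{EI_overline_u}, or directly from \eqref{weak_sol_regularity}) shows that $g:=u_{xx}^2\in L^1(\T\times(0,\infty))$; this global-in-time integrability is what will let the argument handle all of $S$ at once, rather than only $S\cap K$ for compact $K$. Second, fixing any $\beta\in(0,1)$ and taking $\varepsilon_1>0$ from Theorem \ref{thm_2nd_local_reg}, the contrapositive of that theorem gives, for every $z\in S$ (recall \eqref{singset}), that $\limsup_{r\to0}\frac1r\int_{Q(z,r)}u_{xx}^2\ge\varepsilon_1$; in particular, for every $\rho>0$ there is an $r\in(0,\rho)$ with $\int_{Q(z,r)}u_{xx}^2\ge\frac{\varepsilon_1}{2}\,r$.

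The key step---the one that upgrades mere finiteness of $\mathcal{P}^1(S)$ to vanishing---is confinement of the dissipation mass near $S$. We already know $|S|=0$ (see the remark following Corollary \ref{cor_bound_on_db}), so $\int_S g=0$, and by absolute continuity of the measure $A\mapsto\int_A g$ we may, given $\eta>0$, choose an open set $U$ with $S\subset U\subset\T\times(0,\infty)$ and $\int_U g<\eta$. Fixing also $\delta>0$, for each $z\in S$ I would select cylinders $Q(z,r)$ of arbitrarily small radius with $Q(z,r)\subset U$ (possible since $U$ is open), $r<\delta/3$, and $\int_{Q(z,r)}u_{xx}^2\ge\frac{\varepsilon_1}{2}r$ (possible by the previous paragraph); let $\mathcal F$ denote the collection of all such cylinders. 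Then $\mathcal F$ is a fine (Vitali) cover of $S$ by parabolic cylinders of uniformly bounded radii.

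Finally I would apply a Vitali-type covering lemma adapted to parabolic cylinders. The relevant enlargement property holds: if $Q(z_1,r_1)$ and $Q(z_2,r_2)$ intersect with $r_2\le r_1$, then $Q(z_2,r_2)\subset Q(z_1,3r_1)$. Hence there is a countable pairwise disjoint subfamily $\{Q(z_i,r_i)\}_{i\ge1}\subset\mathcal F$ with $S\subset\bigcup_iQ(z_i,3r_i)$, and since the $Q(z_i,r_i)$ are disjoint and contained in $U$,
\[
\sum_i 3r_i\;\le\;\frac{6}{\varepsilon_1}\sum_i\int_{Q(z_i,r_i)}u_{xx}^2\;=\;\frac{6}{\varepsilon_1}\int_{\bigcup_iQ(z_i,r_i)}u_{xx}^2\;\le\;\frac{6}{\varepsilon_1}\int_U g\;<\;\frac{6\eta}{\varepsilon_1}.
\]
Thus $S$ is covered by cylinders of radii $<\delta$ whose radii sum to less than $6\eta/\varepsilon_1$, so $\mathcal{P}^1_\delta(S)\le 6\eta/\varepsilon_1$; letting $\eta\to0$ and then $\delta\to0$ yields $\mathcal{P}^1(S)=0$.

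I expect the only genuinely delicate points, beyond invoking Theorem \ref{thm_2nd_local_reg}, to be the two noted above: establishing the global-in-time integrability of $u_{xx}^2$ (handled by the energy inequality) so that the argument is not restricted to compact subsets, and exploiting $|S|=0$ together with absolute continuity in the confinement step---without it, the bare covering estimate would give only $\mathcal{P}^1(S)<\infty$. The Vitali covering lemma itself transfers to the parabolic metric without real difficulty, since parabolic cylinders satisfy the same enlargement property as Euclidean balls.
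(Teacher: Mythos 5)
Your argument is correct and is essentially the paper's own proof: the contrapositive of Theorem \ref{thm_2nd_local_reg} at singular points, confinement of the dissipation in a small open neighbourhood of $S$ (using $|S|=0$, absolute continuity and $u_{xx}\in L^2$), and a Vitali covering argument to sum the radii. The only cosmetic difference is the enlargement factor: for an arbitrary (infinite) family of cylinders the standard Vitali lemma gives covering by the $5r_i$-enlargements rather than $3r_i$ (the factor-$3$ dilation requires a disjoint cylinder of radius at least as large, which the greedy selection need not provide), but replacing $3$ by $5$ changes only the irrelevant constant $6\eta/\varepsilon_1$ to $10\eta/\varepsilon_1$.
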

Note that this in particular gives $d_H(S)\leq 1$ (since $\mathcal{H}^1 (S) \leq c \mathcal{P}^1 (S)$, where $\mathcal{H}^1$ denotes the $1$-dimensional Hausdorff measure). 

We will need the Vitali Covering Lemma in the following form: given a family of parabolic cylinders $Q_r(x,t)$, there exists a countable (or finite) disjoint subfamily $\{Q_{r_i}(x_i,t_i)\}$ such that for any cylinder $Q_r(x,t)$ in the original family there exists an $i$ such that $Q_r(x,t)\subset Q_{5r_i}(x_i,t_i)$. (For a proof see Caffarelli, Kohn \& Nirenberg (1982).)

\begin{proof} Fix $\delta >0$ and let $V$ be an open set containing $S$ such that
\[
\frac{5}{\varepsilon_1 } \int_V u_{xx}^2 \leq \delta .
\]
Such $V$ exists since $u_{xx}\in L^2 (\TT \times (0,T))$ (recall \eqref{weak_sol_regularity}) and since $|S|=0$ (see the comments preceding this section).
For each $(x,t)\in S$, choose $r\in (0,\delta )$ such that $Q_{r/5}(x,t)\subset V$ and
$$
\frac{5}{r}\int_{Q_{r/5}(x,t)}u_{xx}^2>\varepsilon_1.
$$
Such a choice is possible, for otherwise the point $(x,t)$ would be regular due to Theorem \ref{thm_2nd_local_reg}. We now use the Vitali Covering Lemma to extract a countable (or finite) disjoint subcollection of these cylinders $\{Q_{r_i/5}(x_i,t_i)\}$ such that the singular set $S$ is still covered by $\{Q_{r_i}(x_i,t_i)\}$. Then
\[
\sum_i r_i \leq \frac{5}{\varepsilon_1} \sum_i \int_{Q_{r_i/5}(x_i,t_i)}u_{xx}^2 \leq \frac{5}{\varepsilon_1}  \int_V u_{xx}^2 \leq \delta,
\]
as required.
\end{proof}

\section{Conclusion and further discussion}

We have proved two conditional regularity results, and as a consequence two bounds on singular space-time set for the SGM:
$$
d_B(S\cap K)\le 7/6\qquad\mbox{and}\qquad P^1(S)=0,
$$
for any compact $K\subset \TT\times (0,\infty )$.
As with the Navier--Stokes equations, there is a gap here between the box-counting and Hausdorff dimensions; as with the NSE, it is an open question whether these dimension estimates can be equalised.

In this context, it would be interesting to adapt the constructions due to Scheffer \cite{scheffer_a_sol,scheffer_nearly}, see also O\.za\'nski \cite{Scheffer_stuff,ozanski_any_energy_profile}) of solutions of the weak form of the `Navier--Stokes inequality' that have a space-time singular set of Hausdorff dimension $\gamma$ for any $\gamma\in(0,1)$ to the SGM. This seems difficult, since the constructions make use of (i) the three-dimensional nature of the fluid flow and (ii) the pressure function plays a fundamental role in amplifying the magnitude of the velocity.

There are some outstanding conditional regularity problems for the SGM: one is to prove a local version of the $L^{8/(2\alpha-1)}(0,T;H^\alpha)$ regularity condition (this result is only known in a global form, see introduction); and the other to prove the same for $u\in L^\infty_tL^\infty_x$, both globally and locally. In particular the first would imply that the complement of our `singular set' $S$ really does consist of points in a neighbourhood of which $u$ is regular in space.
\section*{Acknowledgements}
WSO is supported by EPSRC as part of the MASDOC DTC at the University of Warwick, Grant No. EP/HO23364/1; JCR was partially supported by an EPSRC Leadership Fellowship EP/G007470/1.
\appendix

\section{A general (parabolic) Campanato Lemma}

Let $x\in\R^{n_1}$, $y\in\R^{n_2}$, and write $z=(x,y)$. Let $Q_r(z)$ denote the set $B_r(x)\times B_{r^\alpha}(y)$ with volume $Vr^n$, where $n=n_1+\alpha n_2$, $\alpha \in \NN$.

For any $f\in L^1(Q_r(z))$, define
$$
f_{z,r}=\fint_{Q_r(z)}f(y)\,\d y=\frac{1}{r^nV}\int_{Q_r(z)}f(y)\,\d y.
$$

\begin{lemma}[Comparison of averages]\label{lem:averages}
If $f\in L^1(Q_r(z))$ then
\be{star}
|f_{z,\theta r}-f_{z,r}|\le\theta^{-n}\left(\fint_{Q_r(z)}|f(y)-f_{z,r}|^p\,\d y\right)^{1/p}.
\ee
\end{lemma}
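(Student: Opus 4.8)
The plan is to prove \eqref{star} by a direct computation exploiting the fact that both averages are integrals over nested cylinders, combined with H\"older's inequality. First I would write
\[
f_{z,\theta r}-f_{z,r} = \fint_{Q_{\theta r}(z)}\bigl(f(y)-f_{z,r}\bigr)\,\d y,
\]
using that the integral of the constant $f_{z,r}$ over $Q_{\theta r}(z)$ equals $f_{z,r}$ itself (since $\fint$ is an average). This is the key identity: it replaces $f$ by $f-f_{z,r}$ inside the smaller average at no cost.

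Next I would estimate the right-hand side by enlarging the domain of integration from $Q_{\theta r}(z)$ to $Q_r(z)$ (legitimate since $\theta\le 1$ and $Q_{\theta r}(z)\subset Q_r(z)$, as $B_{\theta r}(x)\subset B_r(x)$ and $B_{(\theta r)^\alpha}(y)\subset B_{r^\alpha}(y)$) and accounting for the change in the normalising volume. Concretely,
\[
|f_{z,\theta r}-f_{z,r}| \le \frac{1}{(\theta r)^n V}\int_{Q_{\theta r}(z)}|f(y)-f_{z,r}|\,\d y \le \frac{1}{(\theta r)^n V}\int_{Q_{r}(z)}|f(y)-f_{z,r}|\,\d y = \theta^{-n}\fint_{Q_r(z)}|f(y)-f_{z,r}|\,\d y.
\]
Finally I would apply H\"older's inequality (or Jensen's inequality for the probability measure $\fint_{Q_r(z)}$) to pass from the $L^1$ average to the $L^p$ average, obtaining
\[
\fint_{Q_r(z)}|f(y)-f_{z,r}|\,\d y \le \left(\fint_{Q_r(z)}|f(y)-f_{z,r}|^p\,\d y\right)^{1/p},
\]
which combined with the previous display yields \eqref{star}.

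There is no real obstacle here; the only point requiring minor care is bookkeeping the factor $\theta^{-n}$ correctly, i.e. tracking that $|Q_{\theta r}(z)| = (\theta r)^n V = \theta^n |Q_r(z)|$ under the parabolic scaling (radius $\theta r$ in the $x$-variables and $(\theta r)^\alpha$ in the $y$-variables, so that the volume scales by $\theta^{n_1}\cdot\theta^{\alpha n_2} = \theta^{n_1+\alpha n_2} = \theta^n$), and that enlarging the integration domain while keeping the smaller normalisation is exactly what produces this factor. Everything else is a one-line application of the triangle inequality and H\"older's inequality.
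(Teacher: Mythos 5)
Your proposal is correct and follows essentially the same route as the paper: rewrite the difference as the average of $f-f_{z,r}$ over the small cylinder, enlarge the domain of integration while keeping the $(\theta r)^n V$ normalisation to produce the factor $\theta^{-n}$, and then pass from the $L^1$ to the $L^p$ average (the paper does this via H\"older's inequality on $p$-th powers, which is equivalent to your Jensen step). The volume bookkeeping $|Q_{\theta r}(z)|=\theta^n|Q_r(z)|$ is exactly as you describe, so nothing is missing.
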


\begin{proof}
We have
\begin{align*}
|f_{z,\theta r}&-f_{z,r}|^p=\left|\frac{1}{(\theta r)^nV}\int_{Q_{\theta r}(z)}f(y)-f_{z,r}\,\d y\right|^p\\
&\le\frac{1}{(\theta r)^{pn}V^p}\left(\int_{Q_{\theta r}(z)}|f(y)-f_{z,r}|\,\d y\right)^p\\
&\le\frac{1}{(\theta r)^{pn}V^p}\left(\int_{Q_r(z)}|f(y)-f_{z,r}|\,\d y\right)^p\\
&\le\frac{1}{(\theta r)^{pn}V^p}\left[\left(\int_{Q_r(z)}|f(y)-f_{z,r}|^p\,\d y\right)^{1/p}\left(\int_{Q_r(z)}\,\d y\right)^{(p-1)/p}\right]^p\\
&=\frac{1}{(\theta r)^{pn}V^p}\left(\int_{Q_r(z)}|f(y)-f_{z,r}|^p\,\d y\right)(r^nV)^{p-1}\\
&=\frac{1}{\theta^{pn}r^nV}\left(\int_{Q_r(z)}|f(y)-f_{z,r}|^p\,\d y\right)\\
&=\theta^{-pn}\fint_{Q_r(z)}|f(y)-f_{z,r}|^p\,\d y,
\end{align*}
which yields (\ref{star}).
\end{proof}

\begin{lemma}[Campanato]\label{Campanato}
  Let $R\in(0,1)$, $f\in L^1(Q_R(0))$ and suppose that there exist positive constants $\beta\in(0,1]$, $M>0$, such that
  \be{CampA}
  \left(\fint_{Q_r(z)}|f(y)-f_{z,r}|^p\,\d y\right)^{1/p}\le Mr^{\beta}
  \ee
  for any $z\in Q_{R/2}(0)$ and any $r\in(0,R/2)$. Then $f$ is H\"older continuous in $Q_{R/2}(0)$: for any $z,w\in Q_{R/2}(0)$, $z=(x,t)$, $w=(y,s)$,
  \be{fHolder}
  |f(x,t)-f(y,s)|\le cM(|x-y|+|t-s|^{1/\alpha})^\beta.
  \ee
\end{lemma}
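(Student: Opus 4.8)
The plan is to establish the classical Campanato embedding: the decay condition \eqref{CampA} controls the oscillation of $f$ over shrinking cylinders at a geometric rate, which forces $f$ (after modification on a null set) to be H\"older continuous. I would proceed in four steps.

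\emph{Step 1: Geometric decay of the averages.} First I would apply Lemma \ref{lem:averages} with radii $r$ and $\theta r$ for a fixed $\theta\in(0,1/2)$, combined with the hypothesis \eqref{CampA}, to get
\[
|f_{z,\theta r}-f_{z,r}|\le \theta^{-n}Mr^\beta.
\]
Iterating with $r\to\theta^k r$ (all centres $z\in Q_{R/2}(0)$, all radii below $R/2$, so the hypothesis applies throughout) yields $|f_{z,\theta^{k+1}r}-f_{z,\theta^k r}|\le \theta^{-n}M(\theta^k r)^\beta = \theta^{-n}M r^\beta\theta^{k\beta}$. Since $\sum_k \theta^{k\beta}<\infty$, the sequence $(f_{z,\theta^k r})_k$ is Cauchy; denote its limit by $\tilde f(z)$. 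Summing the tail gives the rate
\[
|f_{z,\rho}-\tilde f(z)|\le cM\rho^\beta\qquad\text{for all }\rho\in(0,R/2),
\]
where $c=c(n,\theta,\beta)$, after interpolating between the discrete radii $\theta^k r$ using Lemma \ref{lem:averages} once more to pass from arbitrary $\rho$ to the nearest $\theta^k r$.

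\emph{Step 2: $\tilde f = f$ a.e.} By the Lebesgue differentiation theorem applied on the cylinders $Q_\rho(z)$ (which shrink nicely), $f_{z,\rho}\to f(z)$ for a.e.\ $z$; hence $\tilde f(z)=f(z)$ for a.e.\ $z\in Q_{R/2}(0)$, and we may redefine $f:=\tilde f$ everywhere on $Q_{R/2}(0)$, so that the estimate $|f_{z,\rho}-f(z)|\le cM\rho^\beta$ holds for every $z\in Q_{R/2}(0)$.

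\emph{Step 3: H\"older continuity.} Take $z=(x,t)$, $w=(y,s)$ in $Q_{R/2}(0)$ and set $\rho:=|x-y|+|t-s|^{1/\alpha}$ (the parabolic distance), which we may assume is smaller than $R/4$, say, since for larger separations the bound is trivial after adjusting constants. Both $z$ and $w$ lie in a cylinder $Q_{c_0\rho}(\zeta)$ for a suitable centre $\zeta$ and absolute constant $c_0$; then
\[
|f(z)-f(w)|\le |f(z)-f_{\zeta,c_0\rho}|+|f_{\zeta,c_0\rho}-f(w)|\le 2cM(c_0\rho)^\beta,
\]
where each term is bounded by comparing $f(z)$ (resp.\ $f(w)$) to the average over the common cylinder --- this uses Step 2 with the observation that $|f_{z,c_1\rho}-f_{\zeta,c_0\rho}|\le c\,\fint_{Q_{c_0\rho}(\zeta)}|f-f_{\zeta,c_0\rho}|^p$ by the same averaging trick as in Lemma \ref{lem:averages}, generalised to two nearby centres with comparable radii, and then invoking \eqref{CampA}. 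This gives \eqref{fHolder}.

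The main obstacle is the bookkeeping in Step 3: Lemma \ref{lem:averages} as stated compares two concentric cylinders, but here one needs to compare averages over cylinders with \emph{different but comparable} centres and radii, both contained in a slightly larger common cylinder. This is routine --- one dominates $|f_{z,c_1\rho} - f_{\zeta,c_0\rho}|$ by $(c_0\rho)^{-n}V^{-1}\int_{Q_{c_0\rho}(\zeta)}|f-f_{\zeta,c_0\rho}|$ whenever $Q_{c_1\rho}(z)\subset Q_{c_0\rho}(\zeta)$, exactly as in the proof of Lemma \ref{lem:averages}, at the cost of absolute constants --- but it must be done carefully with attention to the parabolic (anisotropic) scaling $B_r(x)\times B_{r^\alpha}(t)$, ensuring that the parabolic distance $\rho$ is the right gauge so that two points at parabolic distance $\rho$ genuinely sit inside a common cylinder of radius $O(\rho)$.
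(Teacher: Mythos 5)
Your proposal is correct and follows essentially the same route as the paper: geometric decay of the concentric averages via Lemma \ref{lem:averages} and telescoping, identification of the limit with $f$ a.e.\ by Lebesgue differentiation, and then a comparison of averages over nearby cylinders at the scale of the parabolic gauge $\rho=|x-y|+|t-s|^{1/\alpha}$ (the paper simply takes the enlarged cylinder $Q_{2\rho}$ centred at the second point rather than an auxiliary centre $\zeta$), finished by the triangle inequality. The only slip is cosmetic: in Step 3 the bound on $|f_{z,c_1\rho}-f_{\zeta,c_0\rho}|$ should carry the exponent $1/p$ on the averaged integral, exactly as in Lemma \ref{lem:averages}.
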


\begin{proof}
Choose $z\in Q_{R/2}(0)$ and $r<R/2$. Using Lemma \ref{lem:averages} we can compare $f_{z,r/2}$ with $f_{z,r}$:
$$
|f_{z,r/2}-f_{z,r}|\le 4^n\left(\fint_{Q_r(z)}|f(y)-f_{z,r}|^p\,\d t\right)^{1/p}\le 4^nMr^\beta.
$$
Now consider $f_{z,2^{-k}}-f_{z,r}$. Since
$$
f_{z,r2^{-k}}-f_{z,r}=\sum_{j=1}^{k} f_{z,r2^{-k}}-f_{z,r2^{-(k-1)}},
$$
it follows that
\be{avav}
\left|f_{z,r2^{-k}}-f_{z,r}\right|\le\sum_{j=1}^k cMr^\beta 2^{-(j-1)\beta}\le\sum_{j=0}^\infty cMr^\beta 2^{-j\beta}=cMr^\beta.
\ee
This shows that $f_{z,r2^{-k}}$ forms a Cauchy sequence, and hence the averages converge for every $z\in Q_{r/2}(0)$. By the Lebesgue Differentiation Theorem, these averages converge to $f(z)$ for almost every $z$, and so if we let $k\rightarrow\infty$ in (\ref{avav}) we obtain an estimate for the difference between $f(z)$ and its average,
$$
|f(z)-f_{z,r}|\le c_1Mr^\beta.
$$

Now take another point $\zeta\in Q_{R/2}(0)$; we compare $f_{z,r}$ with $f_{\zeta,2r}$:
\begin{align*}
|f_{z,r}-f_{\zeta,2r}|^p&=\left|\frac{1}{r^nV}\int_{Q_r(z)}f(y)-f_{\zeta,2r}\,\d y\right|^p\\
&\le\frac{1}{r^nV}\int_{Q_r(z)}|f(y)-f_{\zeta,2r}|^p\,\d y,
\end{align*}
arguing as before. Now if $z=(x,t)$ and $\zeta=(\xi,s)$, choose
$$
r=|x-\xi|+|t-s|^{1/\alpha};
$$
then $Q_r(x,t)\subset Q_{2r}(\xi,s)$, and so we can enlarge the domain of integration to obtain
\begin{align*}
|f_{z,r}-f_{\zeta,2r}|^p&\le \frac{2^n}{(2r)^nV}\int_{Q_{2r}(\zeta)}|f(y)-f_{\zeta,2r}|^p\,\d y\\
&=2^n\fint_{Q_{2r}(\zeta)}|f(y)-f_{\zeta,2r}|^p\,\d y\\
&\le 2^nM^p(2r)^{p\beta},
\end{align*}
i.e.
$$
|f_{z,r}-f_{\zeta,2r}|\le 2^{p+(n/p)}Mr^\beta.
$$

So now (still with $r$ chosen as above)
\begin{align*}
|f(z)-f(\zeta)|&\le|f(z)-f_{z,r}|+|f_{z,r}-f_{\zeta,2r}|+|f_{\zeta,2r}-f(\zeta)|\\
&\le c_1Mr^\beta+c_2Mr^\beta+c_1M(2r)^\beta\\
&=cM(|x-y|+|t-s|^{1/\alpha})^\beta,
\end{align*}
which is (\ref{fHolder}).
\end{proof}

\bibliographystyle{plain}
\bibliography{literature}

\begin{thebibliography}{10}

\bibitem{aramaki}
J.~Aramaki.
\newblock Poincar\'e inequality and {C}ampanato estimates for weak solutions of
  parabolic equations.
\newblock {\em Electronic J. Diff. Eq.}, (204):1--8, 2016.

\bibitem{blomkerromito09}
D.~Bl\"omker and M.~Romito.
\newblock Regularity and blow up in a surface growth model.
\newblock {\em Dyn. Partial Differ. Equ.}, 6(3):227--252, 2009.

\bibitem{blomkerromito12}
D.~Bl\"omker and M.~Romito.
\newblock Local existence and uniqueness in the largest critical space for a
  surface growth model.
\newblock {\em Nonlinear Differ. Equ. Appl.}, 19(3):365--381, 2012.

\bibitem{CKN}
L.~Caffarelli, R.~Kohn, and L.~Nirenberg.
\newblock Partial regularity of suitable weak solutions of the
  {N}avier-{S}tokes equations.
\newblock {\em Comm. Pure Appl. Math.}, 35(6):771--831, 1982.

\bibitem{NePaVa:2012}
E.~Di~Nezza, G.~Palatucci, and E.~Valdinoci.
\newblock Hitchhiker's guide to the fractional {S}obolev spaces.
\newblock {\em Bull. Sci. Math.}, 136(5):521--573, 2012.

\bibitem{ESS_2003}
L.~Escauriaza, G.~A. Seregin, and V.~\v{S}ver\'{a}k.
\newblock {$L_{3,\infty}$}-solutions of {N}avier-{S}tokes equations and
  backward uniqueness.
\newblock {\em Russian Math. Surveys}, 58(2):211--250, 2003.

\bibitem{falconer}
K.~Falconer.
\newblock {\em Fractal geometry - {M}athematical foundations and applications}.
\newblock John Wiley \& Sons, Ltd., Chichester, 2014.
\newblock Third edition.

\bibitem{he_wang_zhou_2017}
C.~He, Y.~Wang, and D.~Zhou.
\newblock New $\varepsilon$-regularity criteria and application to the
  box-counting dimension of the singular set in the 3{D} {N}avier--{S}tokes
  equations.
\newblock 2017.
\newblock arXiv:1709.01382.

\bibitem{king_stein_winkler}
B.~B. King, O.~Stein, and M.~Winkler.
\newblock A fourth-order parabolic equation modeling epitaxial thin film
  growth.
\newblock {\em J. Math. Anal. Appl.}, 286(2):459--490, 2003.

\bibitem{koch_tataru}
H.~Koch and D.~Tataru.
\newblock Well-posedness for the {N}avier-{S}tokes equations.
\newblock {\em Adv. Math.}, 157(1):22--35, 2001.

\bibitem{kukavica_2009}
I.~Kukavica.
\newblock Partial regularity results for solutions of the {N}avier-{S}tokes
  system.
\newblock In {\em Partial differential equations and fluid mechanics}, volume
  364 of {\em London Math. Soc. Lecture Note Ser.}, pages 121--145. Cambridge
  Univ. Press, Cambridge, 2009.

\bibitem{ladyzhenskaya_seregin}
O.~A. Ladyzhenskaya and G.~A. Seregin.
\newblock On partial regularity of suitable weak solutions to the
  three-dimensional {N}avier-{S}tokes equations.
\newblock {\em J. Math. Fluid Mech.}, 1(4):356--387, 1999.

\bibitem{lieberman}
G.~M. Lieberman.
\newblock {\em Second {O}rder {P}arabolic {D}ifferential {E}quations}.
\newblock World Scientific, 2005.

\bibitem{lin}
F.~Lin.
\newblock A new proof of the {C}affarelli-{K}ohn-{N}irenberg theorem.
\newblock {\em Comm. Pure Appl. Math.}, 51(3):241--257, 1998.

\bibitem{Scheffer_stuff}
W.~S. O\.za\'nski.
\newblock On weak solutions to the {N}avier--{S}tokes inequality with internal
  singularities.
\newblock 2017.
\newblock To appear; preprint available at arXiv:1709.00602.

\bibitem{O_Serrin_cond}
W.~S. O\.za\'nski.
\newblock A sufficient integral condition for local regularity of solutions to
  the surface growth model.
\newblock 2018.
\newblock To appear; preprint available at arXiv:1803.08913.

\bibitem{ozanski_any_energy_profile}
W.~S. O\.za\'nski.
\newblock Weak solutions to the {N}avier--{S}tokes inequality with arbitrary
  energy profiles.
\newblock 2018.
\newblock To appear; preprint available at arXiv:1809.02109.

\bibitem{NSE_book}
J.~C. Robinson, J.~L. Rodrigo, and W.~Sadowski.
\newblock {\em The three-dimensional {N}avier-{S}tokes equations}, volume 157
  of {\em Cambridge Studies in Advanced Mathematics}.
\newblock Cambridge University Press, Cambridge, 2016.

\bibitem{rob_sad_2007}
J.~C. Robinson and W.~Sadowski.
\newblock Decay of weak solutions and the singular set of the three-dimensional
  {N}avier-{S}tokes equations.
\newblock {\em Nonlinearity}, 20(5):1185--1191, 2007.

\bibitem{rob_sad_2009}
J.~C. Robinson and W.~Sadowski.
\newblock Almost-everywhere uniqueness of {L}agrangian trajectories for
  suitable weak solutions of the three-dimensional {N}avier-{S}tokes equations.
\newblock {\em Nonlinearity}, 22(9):2093--2099, 2009.

\bibitem{scheffer_a_sol}
V.~Scheffer.
\newblock A solution to the {N}avier-{S}tokes inequality with an internal
  singularity.
\newblock {\em Comm. Math. Phys.}, 101(1):47--85, 1985.

\bibitem{scheffer_nearly}
V.~Scheffer.
\newblock Nearly one-dimensional singularities of solutions to the
  {N}avier-{S}tokes inequality.
\newblock {\em Comm. Math. Phys.}, 110(4):525--551, 1987.

\bibitem{SteWin:05}
O.~Stein and M.~Winkler.
\newblock Amorphous molecular beam epitaxy: global solutions and absorbing
  sets.
\newblock {\em European J. Appl. Math.}, 16(6):767--798, 2005.

\bibitem{struwe}
M.~Struwe.
\newblock On the {H}\"older continuity of bounded weak solutions of quasilinear
  parabolic systems.
\newblock {\em Manuscripta Math.}, 35(1-2):125--145, 1981.

\bibitem{temam}
R.~Temam.
\newblock {\em Navier-{S}tokes equations, theory and numerical analysis}.
\newblock AMS Chelsea Publishing, Providence, RI, 2001.
\newblock Reprint of the 1984 edition.

\end{thebibliography}
\end{document}